\def\COMMENT#1{}
\let\COMMENT=\footnote
\newtheorem{question}{Question}
\newtheorem{corollary}[question]{Corollary}
\newtheorem{problem}[question]{Problem}
\newtheorem{conjecture}[question]{Conjecture}
\newtheorem{theorem}[question]{Theorem}
\newtheorem{proposition}[question]{Proposition}
\newtheorem{lemma}[question]{Lemma}
\newtheorem{remark}[question]{Remark}
\newtheorem{claim}[question]{Claim}
\newtheorem{definition}[question]{Definition}
\newtheorem{construction}[question]{Construction}
\numberwithin{question}{section}
\numberwithin{equation}{section}
\begin{document}
\title{Extremal problems for multigraphs}
\author{A. Nicholas Day, Victor Falgas-Ravry and
 Andrew Treglown}

\thanks{ND: Ume{\aa} Universitet, Sweden\\
\indent VFR:  Ume{\aa} Universitet, Sweden, {\tt victor.falgas-ravry@umu.se}. Research supported by VR grant 2016-03488.\\
\indent AT: University of Birmingham, United Kingdom, {\tt a.c.treglown@bham.ac.uk}. }

\begin{abstract}
An $(n,s,q)$-graph is an $n$-vertex multigraph in which every $s$-set of vertices spans at most $q$ edges. Tur\'an-type questions on the maximum of the sum of the edge multiplicities in  such multigraphs have been studied since the 1990s. More recently, Mubayi and Terry [An extremal problem with a transcendental solution, Combinatorics Probability and Computing 2019] posed the problem of determining the maximum of the product of the edge multiplicities in
$(n,s,q)$-graphs. We give a general lower bound construction for this problem for many pairs $(s,q)$, which we conjecture is asymptotically best possible. We prove various general cases of our conjecture, and in particular we settle a conjecture of Mubayi and Terry on the $(s,q)=(4,6a+3)$ case  of the problem (for $a\geq2$); this in turn answers a question of Alon. We also determine the asymptotic behaviour of the problem for `sparse' multigraphs (i.e. when $q\leq 2\binom{s}{2}$).
Finally we introduce some tools that are likely to be useful for attacking the problem in general.

\end{abstract}
\maketitle

\section{Introduction}\label{section: introduction}
In 1963, Erd\H{o}s~\cite{erdos1, erdos2} raised the question of determining $\text{ex} (n,s,q)$, the maximum number
 of edges in an $n$-vertex graph in which every $s$-set of vertices spans at most $q$ edges, for some integer $q$ where $0 \leq q \leq \binom{s}{2}$. This can be viewed as asking for an extension of Tur\'an's theorem (which covers the case $q=\binom{s}{2}-1$).
In the range $q\geq  \lfloor s^2/4\rfloor$, $\text{ex} (n,s,q)$ is quadratic in $n$, and the Erd\H{o}s--Stone--Simonovits theorem~\cite{es1, es2} provides an asymptotically exact solution to this problem.  It is also known that $\text{ex} (n,s,q)$ is linear for $q<s-1$, and grows at a superlinear but subquadratic rate in the range $s-1\leq q <\lfloor s^2/4\rfloor$. As Erd{\H o}s~\cite{erdos1} pointed out, determining $\mathrm{ex}(n, s, s-1)$ is equivalent to determining the Tur\'an number of the cycle of length $2\lfloor s/2\rfloor$, which is itself essentially equivalent to determining the maximum number of edges $\mathrm{ex}(n, \{C_3,C_4, \ldots, C_{2\lfloor s/2\rfloor}\})$ in an $n$-vertex graph of girth at least $2\lfloor s/2\rfloor+1$. We refer an interested reader to~\cite[Section 3]{fk} for an overview of some of the results and rich history relating to this question of Erd{\H o}s on the value of $\text{ex} (n,s,q)$.

Since the late 1990s there has been an interest in studying the analogous problem in the weighted or multigraph setting. A \emph{multigraph} is a pair $(V,w)$, where $V$ is a set of vertices and $w$ is a function
$w: \binom{V}{2} \rightarrow \mathbb Z _{\geq 0}$.
\begin{definition}
Given integers $s\geq 2 $ and $q \geq 0$,
we say a multigraph $G=(V,w)$ is an \emph{$(s,q)$-graph} if every $s$-set of vertices in $V$ spans at most $q$ edges; i.e. $\sum _{xy \in \binom{X}{2}} w(xy) \leq q$ for every $X \in \binom{V}{s}$.
An \emph{$(n,s,q)$-graph} is an $n$-vertex $(s,q)$-graph. We write $\mathcal F(n,s,q)$
 for the set of all
$(n,s,q)$-graphs with vertex set $[n]:=\{1, \dots, n\}$.
\end{definition}
The initial work of Bondy and Tuza~\cite{bondy}, Kuchenbrod~\cite{Kuchenbrod} and F\"uredi and K\"undgen~\cite{fk} on $(n,s,q)$-graphs focussed on determining the maximum of the sum of the edge multiplicities in an $(n,s,q)$-graph. In particular, F\"uredi and K\"undgen proved the existence of an explicit constant $m=m(s,q)$ such that every $(n,s,q)$-graph has at most $m\binom{n}{2}+O(n)$ edges, a result which they show is asymptotically tight~\cite[Theorem 1]{fk}. Thus the `sum-version' of Erd{\H o}s's problem for multigraphs is asymptotically understood. 
More recently, Mubayi and Terry~\cite{mt1, mt2} introduced a `product version' of the problem, which we describe below.
\begin{definition}
Given a  multigraph $G=(V,w)$, we define\footnote{The existence of the limit $\text{ex}_{\Pi}(s,q)$ below follows from an easy averaging argument showing  $\text{ex} _\Pi (n,s,q)^{1/\binom{n}{2}}$ is nonincreasing in $n$ for $n\geq s$ and bounded below by $1$.}
\begin{eqnarray*}
P(G)&:=& \prod _{xy \in \binom{V}{2}} w(xy), \\
\text{ex} _\Pi (n,s,q)&:=& \max \{ P(G) : G \in \mathcal{F}(n,s,q)\}, \\
\text{ex} _\Pi (s,q) &:=& \lim_{n \rightarrow \infty}\left( \text{ex} _\Pi \left(n,s,q \right)\right)^{{n \choose 2}^{-1}}. \\
\end{eqnarray*}
\end{definition}
Mubayi and Terry's motivation for studying the quantity $\text{ex} _\Pi (n,s,q)$ is connected to container theory and attempts to develop multigraph versions of the counting theorems of Erd{\H o}s--Kleitman--Rothschild. Explicitly, using the powerful hypergraph container theories developed by Balogh, Morris and Samotij~\cite{BaloghMorrisSamotij} and Saxton--Thomason~\cite{SaxtonThomason}, Mubayi and Terry showed in~\cite[Theorem 2.2]{mt1} that for $q>\binom{s}{2}$,
\begin{align}\label{eq: MT counting}
\Bigl\vert \mathcal{F}\bigl(n,s,q-\binom{s}{2}\bigr)\Bigr\vert = \text{ex} _\Pi (s,q)^{\binom{n}{2}+o(n^2)}.
\end{align}
In particular, solving the Erd{\H o}s--Kleitman--Rothschild-type counting problem of estimating the size of the multigraph family $\mathcal{F}(n,s,q-\binom{s}{2})$ is equivalent to the Tur\'an-type extremal problem of determining $\text{ex} _\Pi (n,s,q)$. Mubayi and Terry thus raised the general problem of determining $\text{ex} _\Pi (n,s,q)$ and initiated its study in two recent papers~\cite{mt1, mt2}.
\begin{problem}[Mubayi--Terry multigraph problem]\label{problem: Mubayi--Terry}
Given positive integers $s\geq 2$ and $q$, determine $\mathrm{ex}_{\Pi}(n,s,q)$.
\end{problem}	
The main result of Mubayi and Terry in~\cite{mt1} was a proof that
$$\text{ex} _\Pi (n,4,15)=2^{\gamma n^2 +O(n)}$$
where $\gamma$ is 
defined by
$$\gamma := \frac{\beta ^2}{2} +\beta (1-\beta )\frac{\log 3}{\log 2} \ \text{ where } \ \beta :=
\frac{\log 3}{2\log 3- \log 2}.$$
Assuming Schanuel's conjecture from number theory, both $\gamma$ and $2^{\gamma}$ are transcendental (see~\cite[Proposition 2.6]{mt1}). According to Mubayi and Terry~\cite{mt1}, this is the first example of a `fairly natural extremal graph problem' whose
asymptotic answer is given by an explicitly defined transcendental number.
In response to a question of Alon on whether this transcendental behaviour is an isolated case,
Mubayi and Terry made a conjecture~\cite[Conjecture 6.3]{mt1}  which, if true, would provide infinitely many examples
of such behaviour.  In this paper we resolve their conjecture fully (see Theorem~\ref{theorem: mubayi Terry conjecture} below), in turn resolving the question of Alon.

In~\cite{mt2} Mubayi and Terry determined $\text{ex}_\Pi (n,s,q)$ exactly or asymptotically for  
pairs $(s,q)$ where $a\binom{s}{2}-\frac{s}{2}\leq q\leq a\binom{s}{2}+s-2$ for some  $a\in \mathbb N$. Building on this work we provide solutions to the problem for a further range of values of $(s,q)$;  some of our results extend  those given in~\cite{mt2}, while others are new.  Further we give general lower bounds on the value of $\text{ex}_{\Pi}(n, s,q)$, which are entirely new and which may be one of the main contributions of the paper.

Before we formally state our results in Section~\ref{subsection: results}, we first need to describe a class of multigraph constructions which may be seen as multigraph analogues of the well known Tur\'{a}n graphs.  This can be found in Section~\ref{subsection: lower bound construction}. We first present the notation and definitions we will use throughout this paper.

\section{Definitions and notation}\label{section: definitions and notation}
Given a set $A$ and $r\in \mathbb{Z}_{\geq 0}$, we let $A^{(r)}$ denote the collection of all subsets of $A$ of size $r$. A multigraph is a pair $G=(V,w)$, where $V=V(G)$ is a set of vertices and $w=w_G$ is a function $w: \ V^{(2)}\rightarrow \mathbb{Z}_{\geq 0}$ assigning to each pair $\{a,b\}\in V^{(2)}$ a \emph{weight} or \emph{multiplicity} $w_G(\{a,b\})$. We usually write $ab$ for $\{a,b\}$ and, when the host multigraph $G$ is clear from context, we omit the subscript $G$ and write simply $w(ab)$ for $w_G(\{a,b\})$. We write $v(G):=|V(G)|$.

Given a multigraph $G$ and a set $X\subseteq V(G)$, we write $e(G[X])$ or, when the host multigraph $G$ is clear from context, $e(X)$ for the sum of the edge multiplicities of $G$ inside $X$, i.e. \[e(G[X]):=\sum_{v_1v_2 \in X^{(2)}} w(v_1v_2).\] 
Similarly, we write $P(G[X])$ or $P(X)$ for the product of the edges multiplicities of $G$ inside $X$, i.e. 
\[P(G[X]):=\prod_{v_1v_2\in X^{(2)}}w(v_1v_2).\] 
Further, given disjoint sets $X,Y\subset V(G)$ we write $e(G[X,Y])$  ($e(X,Y)$) and $P(G[X,Y])$ ($P(X,Y)$) for, respectively the sum  and the product of the edge multiplicities of $G$ over all edges $xy$ with $x\in X$ and $y\in Y$. Given a vertex $v\in V(G)$, we write $p_G(v)$ for the product of the edge multiplicities over all edges of $G$ containing $v$; again, when $G$ is clear from context we omit the subscript and write simply $p(v)$. 
We refer to this quantity as the \emph{product-degree} of $v$ in $G$.
We define $d_G (v)$ (or simply $d(v)$) to be the sum of the edge multiplicities over all edges of $G$ containing $v$, and refer to this quantity as the \emph{degree}
of $v$ in $G$. For $n\in \mathbb{N}$ we write $[n]$ as a shorthand for the set $\{1,2,\ldots ,n\}$ and $[0,n]$ as shorthand for $\{0\} \cup [n]$.

\section{Lower bound construction,  statements of results, and a conjecture}\label{section:lower bound constructions, results and conjectures}
\subsection{A conjectural extremal construction}\label{subsection: lower bound construction}
A key goal of the paper is to introduce a natural class of constructions which we believe give extremal examples for many general cases of the Mubayi--Terry multigraph problem. 
\begin{construction}\label{construction: lower bound}
Let $a,r \in \mathbb{N}$ and $d\in [0,a-1]$. Given $n \in \mathbb{N}$, let $\mathcal{T}_{r,d}(a,n)$ denote the collection of multigraphs $G$ on $[n]$ for which $V(G)$ can be partitioned into $r$ parts $V_0, \ldots ,V_{r-1}$ such that:
\begin{enumerate}[(i)]
	\item all edges in $G[V_0]$ have multiplicity $a-d$;
	\item for all $i\in [r-1]$,  all edges in $G[V_i]$ have multiplicity $a$;
	\item all other edges of $G$ have multiplicity $a+1$.
\end{enumerate} 
Given $G\in \mathcal{T}_{r,d}(a,n)$, we refer to $\sqcup_{i=0}^{r-1}V_i$ as the \emph{canonical partition} of $G$.  
\end{construction}	
This class of constructions generalises constructions arising in the work of Mubayi and Terry~\cite{mt1, mt2} (these correspond to the special cases $d=0$ and $(r,d)=(2,1)$ in our construction). See Figure~\ref{figure: T-example} for an example of what these graphs look like when $r = 4$.
\begin{figure}[ht]
	\centering
\includegraphics[scale=0.8]{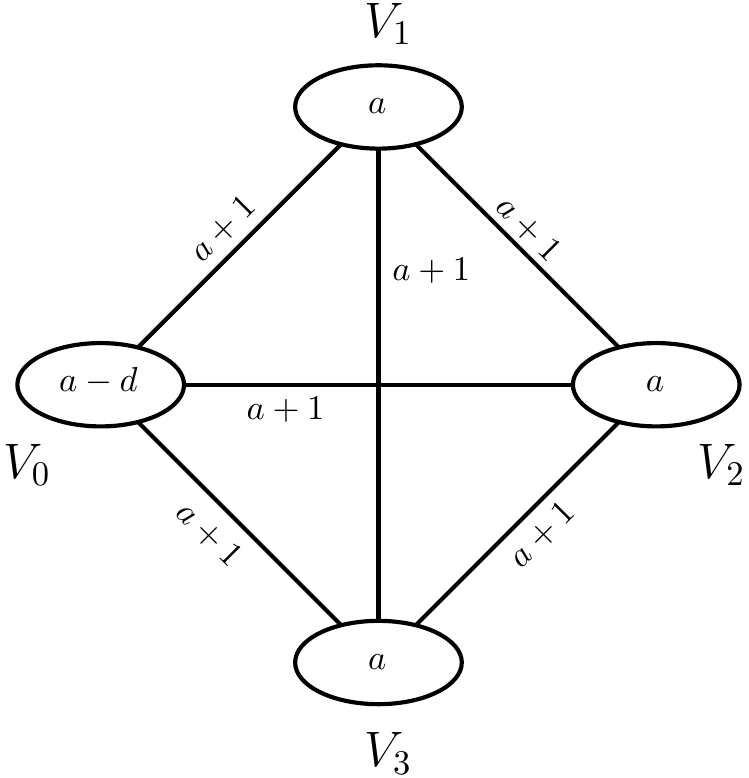}
	\caption{An example of the structure of graphs in $\mathcal {T}_{4,d}(a,n)$.}  
	\label{figure: T-example}
\end{figure}	
We write
\begin{equation}
\Sigma_{r,d}(a,n) := \max \{e(G) : G  \in \mathcal{T}_{r,d}(a,n)\}, \nonumber 
\end{equation}
and
\begin{equation}
\Pi_{r,d}(a,n) := \max \{P(G) : G  \in \mathcal{T}_{r,d}(a,n)\}. \nonumber
\end{equation}
Let $T^{e}_{r,d}(a,n)$ denote the family of multigraphs $G \in \mathcal{T}_{r,d}(a,n)$ with $e(G)=\Sigma_{r,d}(a,n)$ and $T^{P}_{r,d}(a,n)$ denote the family of multigraphs $G\in \mathcal{T}_{r,d}(a,n)$ with $P(G) =\Pi_{r,d}(a,n)$.  These two multigraph families are in general quite different. Indeed, let $\sqcup_{i=0}^{r-1}V_i$ be a canonical partition of $G\in \mathcal{T}_{r,d}(a,n)$, and set $\vert V_i\vert =v_in$. Given a multigraph	$G\in \mathcal{T}_{r,d}(a,n)$, we have $e(G)= (a+1)\binom{n}{2}  -(d+1) \binom{v_0n}{2}- \sum_{i=1}^{r-1} \binom{v_in}{2}$.  An easy exercise in optimisation shows that this is maximised when 
\begin{align}\label{eq: part sizes, sum-extremal inside Turan family}
v_0=\frac{1}{d(r-1)+r}+O(n^{-1}) && \textrm{ and } && v_i = \frac{1-v_0}{r-1}+O(n^{-1}) \textrm{ for }i\in [r-1].
\end{align}	
On the other hand,  
\begin{align}\label{eq1}
\log_{(a+1)}P(G)= \binom{n}{2} -  \left(\log_{(a+1)}\Bigl(\frac{a+1}{a-d}\Bigr)\right)\binom{v_0n}{2} - \left(\log_{(a+1)}\Bigl(\frac{a+1}{a}\Bigr)\right)\sum_{i=1}^{r-1}\binom{v_in}{2},
\end{align}
 which is maximised when
\begin{align}\label{eq: part sizes, product-extremal inside Turan family}
v_0=\frac{\log \left(\frac{a+1}{a}\right)}{\log \left(\frac{(a+1)^{r}}{a(a-d)^{r-1}}\right)}+O(n^{-1}) && \textrm{ and } && v_i = \frac{1-v_0}{r-1}+O(n^{-1}) \textrm{ for }i\in [r-1].
\end{align}	
Thus when $d=0$, having $v_0=1/r+O(n^{-1})$ and a balanced $r$-partition maximises both the edge-sum and edge-product of $G\in \mathcal{T}_{r,d}(a,n)$. For other $d$, however, the weightings $v_0, v_1, \ldots ,v_{r-1}$ in optimal partitions for edge-sum and edge-product are very different. In particular, while $v_0\approx 1/\left((d+1)(r-1)+1\right)$ if we want to maximise the edge-sum, it must be chosen strictly smaller if we want to maximise the edge-product.  Explicitly, given fixed $r\in \mathbb{N}$, we define a function $x_{r\star}=x_{r\star}(a,d)$ from the set of pairs $(a,d)$ with  $d\in \mathbb{Z}_{\geqslant 0}$ and $a\in \mathbb{Z}_{>d}$ by 
\begin{align}\label{eq: def of xstar}
x_{r\star}(a,d):= \frac{\log \left(\frac{a+1}{a}\right)}{\log \left(\frac{(a+1)^{r}}{a(a-d)^{r-1}}\right)}.
\end{align}
Thus the graphs in $\mathcal{T}_{r,d}(a,n)$ that maximise the edge-product will have $v_{0} = x_{r\star}(a,d) + O(n^{-1})$.  By considering the product-degrees of vertices in $\mathcal{T}_{r,d}(a,n)$, noting that they will have as equal product-degree as possible, and then letting $n$ increase to infinity, we equivalently have that $x_{r\star}(a,d)$ is the unique solution to
\begin{equation}\label{eq: x_star property}
(a-d)^{x}(a+1)^{1-x} = a^{\frac{1-x}{r-1}} (a+1)^{\frac{r-2+x}{r-1}}. 
\end{equation}
Here the left-hand side arises by considering a vertex from $V_0$, the right-hand side by considering a vertex outside of $V_0$. As we show in Proposition~\ref{prop: a-monotonicity of xstar} below, $x_{r\star}<1/\left((d+1)(r-1)+1\right)$, so that product-extremal and sum-extremal optimal partitions (and thus the associated multigraphs) differ significantly. In Section~\ref{sec4} we prove some further results about $x_{r\star}(a,d)$ and $\Sigma_{r,d}(a,n)$ that we will need to proceed.  In particular, in the extremal results in this paper, we will need some information about the smallest integer value of $s$ for which $\Sigma_{r,d}(a, s)<\Sigma_{r,d-1}(a,s)$ holds, which turns out to be $s=(r-1)(d+1)+2$ (see Corollary~\ref{corollary: why (r-1)(d+1)+2} below).


We are now ready to state our main conjecture, which will underpin much of the work we do and the results we prove throughout this paper.
\begin{conjecture}\label{conjecture: entropy densities} 
For all integers $a,r,s, d$ with $a,r \geq 1$, $d\in [0,a-1]$, $s\geq (r-1)(d+1)+2$ and all $n$ sufficiently large,
\[ \mathrm{ex}_{\Pi}(n, s, \Sigma_{r,d}(a,s)) = \Pi_{r,d}(a,n).\]
\end{conjecture}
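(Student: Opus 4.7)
The lower bound $\mathrm{ex}_\Pi(n,s,\Sigma_{r,d}(a,s))\geq \Pi_{r,d}(a,n)$ is immediate from Construction~\ref{construction: lower bound}: for any $G\in \mathcal{T}_{r,d}(a,n)$ and any $s$-set $X\subseteq V(G)$, the induced multigraph $G[X]$ together with its inherited partition lies in $\mathcal{T}_{r,d}(a,s)$, so $e(G[X])\leq \Sigma_{r,d}(a,s)$; choosing $G$ to maximise $P(G)$ within $\mathcal{T}_{r,d}(a,n)$ yields the claim. All the content of the conjecture is in the matching upper bound, for which my plan is a three-stage attack on an extremal $G^\ast\in \mathcal{F}(n,s,\Sigma_{r,d}(a,s))$ with $P(G^\ast)=\mathrm{ex}_\Pi(n,s,\Sigma_{r,d}(a,s))$.

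\emph{Stage 1 (bounding edge multiplicities).} I would first show that every edge of $G^\ast$ has multiplicity in a bounded range $\{a-d,\ldots,M\}$ with $M=M(a,r,d,s)$. The lower bound should come from a local trade-off argument: an edge of multiplicity less than $a-d$ can be raised -- possibly after a small compensating decrease on an incident edge of multiplicity above $a$ -- in such a way as to preserve the $s$-set constraint and strictly increase $P(G^\ast)$. The upper bound on multiplicities should follow from a pigeonhole-style argument showing that an edge of very large multiplicity forces some $s$-set containing it to exceed $\Sigma_{r,d}(a,s)$. The hypothesis $s\geq (r-1)(d+1)+2$, which by Corollary~\ref{corollary: why (r-1)(d+1)+2} is the critical threshold at which the $(r,d)$-construction becomes tight, is crucial for calibrating both bounds.

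\emph{Stage 2 (structure via symmetrisation).} Next, I would perform a Zykov-style symmetrisation at the vertex level: given $u,v\in V(G^\ast)$ with $p_{G^\ast}(u)<p_{G^\ast}(v)$, I would argue that replacing the edge-multiplicity profile at $u$ by a copy of that at $v$ either strictly increases $P(G^\ast)$, contradicting extremality, or violates the $s$-set constraint, contradicting admissibility. Iterating, $V(G^\ast)$ partitions into a bounded number of equivalence classes of mutually interchangeable vertices, between any two of which edges have constant multiplicity. This is the step I expect to be the main obstacle: in a multigraph a local swap propagates multiplicatively through every $s$-set containing a modified edge, and unlike in the classical Tur\'an setting one cannot freely delete and re-add individual edges. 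Pushing the symmetrisation through will likely require a preliminary stability result showing first that any near-extremal $G$ lies within $o(n^2)$ edge-modifications of some member of $\mathcal{T}_{r,d}(a,n)$, and then cleaning up the remaining defects vertex by vertex.

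\emph{Stage 3 (finite optimisation and induction).} With such a structure theorem in hand, $P(G^\ast)$ becomes a function of the number of classes, the integer multiplicities inside and between classes, and the relative class sizes. Using the log-concavity of~(\ref{eq1}) in the part sizes and the critical equation~(\ref{eq: x_star property}) characterising the product-extremal weighting, one can check by finite case analysis that any candidate structure not already of the form $\mathcal{T}_{r,d}(a,n)$ either fails the $s$-set constraint or is strictly outperformed by $\Pi_{r,d}(a,n)$; the optimal part sizes are precisely those prescribed by~(\ref{eq: part sizes, product-extremal inside Turan family}). A final induction on $n$, powered by the stability statement from Stage~2, would then upgrade this asymptotic conclusion to the exact equality claimed in the conjecture for all sufficiently large $n$.
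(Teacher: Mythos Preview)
The statement you are attempting to prove is Conjecture~\ref{conjecture: entropy densities}, which the paper does \emph{not} prove in general; it remains open. The paper establishes only the lower bound (exactly as you do in your first paragraph) together with several special cases of the upper bound: the $d=0$ case (Theorem~\ref{theorem: turan}), the $(r,d)=(2,1)$ case for $s\in\{4,5,6,7\}$ (Theorems~\ref{theorem: mubayi Terry conjecture}--\ref{theorem: (7,9)}), and a conditional step-up result (Theorem~\ref{theorem: step-up}). So there is no ``paper's proof'' to compare against, and your proposal, as you yourself signal, is a plan rather than a proof.

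On the substantive side, your Stage~2 is the real gap, and you are right to flag it. The paper's method for the cases it does resolve is quite different from Zykov symmetrisation. Rather than symmetrising directly, the paper first shows (via the integral AM--GM inequality) that if $G$ contains an $s'$-set with too many edges for some $s'<s$, then $G$ has a vertex of low product-degree; this vertex is removed and the argument iterated. Only once $G$ lies in $\bigcap_{2\leq t\leq s}\mathcal{F}(n,t,\Sigma_{r,d}(a,t))$---so that all edge multiplicities are already confined to $\{a-d,\ldots,a+1\}$---does the paper perform a cloning step (Proposition~\ref{prop: modifying G to get the clique structure}), and even then the resulting auxiliary graph must be analysed further (Lemma~\ref{lemma: H is not acyclic implies low-deg vces}). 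Your Stage~1 claim that one can locally raise any edge below $a-d$ without violating the $(s,q)$-constraint is not justified and is essentially false without first passing to this intersection; and your Stage~2 symmetrisation would, as you note, destroy the $(s,q)$-property in general. Neither the paper nor your outline supplies a mechanism for Stage~2 in the general $(r,d)$ case, which is precisely why the conjecture remains open.
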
	
Thus, roughly speaking, the conjecture states that given a (not too small) $s$, and 
$q$ equal to the maximum  number of edges allowed in any $s$-set of vertices in a multigraph from $\mathcal{T}_{r,d}(a,n)$, it is a multigraph $G$ from $\mathcal{T}_{r,d}(a,n)$ that maximises the edge-product $P(G)$ amongst all $(n,s,q)$-graphs.

The reader might wonder  where the lower bound on $s$ in the conjecture comes from. In fact, when $s$ is smaller, the problem is covered by other cases in the conjecture:
given $r+1\leq s \leq (r-1)(d+1)+1$, Corollary~\ref{corollary: why (r-1)(d+1)+2} below
implies that there is a least integer $d'$ with $d>d' \geq 0$,
$\Sigma_{r,d}(a,s)=\Sigma_{r,d'}(a,s)$ and  $s> (r-1)(d'+1) +1$. 
Thus the conjecture states that $\text{ex}_{\Pi}(n,s, \Sigma_{r,d}(a,s))$ should be $\Pi_{r,d'}(a, n)$   (and not the smaller quantity $\Pi_{r,d}(a,n)$).\footnote{Note the remaining case when $s\leq r$ has a simple solution. Indeed, in this case
$\mathrm{ex}_{\Pi}(n, s, \Sigma_{r,d}(a,s))=\mathrm{ex}_{\Pi}(n, s, (a+1)\binom{s}{2})$ which as noted in~\cite{mt2} equals $(a+1)^{\binom{n}{2}}$.}

\smallskip

 Set $\pi_{r,d}(a):=\lim_{n\rightarrow \infty} \log \left(\Pi_{r,d}(a,n)\right)/\binom{n}{2}$.  Note that if true, Conjecture~\ref{conjecture: entropy densities} implies that $\mathrm{ex}_{\Pi}( s, \Sigma_{r,d}(a,s))= e^{\pi_{r,d}(a)} $ for all integers $a,r,s,d$ with $a,r\geq 1$, $0\leq d<a$ and $s\geq (r-1)(d+1)+2$.
 Using (\ref{eq1}) and (\ref{eq: part sizes, product-extremal inside Turan family}) the limiting quantity $\pi_{r,d}(a)$ is easily computed: 
\begin{align}\label{eq: pi(r,d)(a,n)}
\pi_{r,d}(a)= \log(a+1) - (x_{r\star})^2 \log \left(\frac{a+1}{a-d}\right) - \left(\frac{(1-x_{r\star})^2}{r-1}\right)\log \left(\frac{a+1}{a}\right),
\end{align}
where $x_{r\star}=x_{r\star}(a,d)$ is as in~\eqref{eq: def of xstar}. Following~\cite{FalgasRavryOConnellUzzell19}, we refer to $\pi_{r,d}(a)$ as the \emph{entropy density} of $\mathcal{T}_{r,d}(a,n)$.  For any $a$ fixed, the values of the entropy densities $\pi_{r,d}(a)$ follow the lexicographic order on pairs $(r,d)$:
\begin{proposition}\label{prop: ordering of the entropy densities}
Let $a\in \mathbb{N}$ and $r_1, r_2\in \mathbb{N}$, $d_1,d_2\in [0, a-1]$. Then $\pi_{r_1,d_1}(a) \geq \pi_{r_2, d_2}(a)$ if and only if either $r_1> r_2$ or $r_1=r_2$ and $d_1\leq d_2$.
\end{proposition}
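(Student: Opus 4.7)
The plan is to establish two strict monotonicity statements and combine them: (a) at fixed $r$, the map $d \mapsto \pi_{r,d}(a)$ is strictly decreasing on $[0,a-1]$; and (b) for every $r\geq 1$ and all $d,d'\in [0,a-1]$, one has $\pi_{r+1, d}(a) > \pi_{r, d'}(a)$. Iterating (b) yields $\pi_{r_1, d_1}(a) > \pi_{r_2, d_2}(a)$ whenever $r_1>r_2$, which combined with (a) gives both directions of the proposition.

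The central object is the auxiliary function (for $r\geq 2$)
\[ f_{r,d}(x) := \log(a+1) - x^2 \log\Bigl(\tfrac{a+1}{a-d}\Bigr) - \tfrac{(1-x)^2}{r-1}\log\Bigl(\tfrac{a+1}{a}\Bigr), \qquad x \in [0,1], \]
which by~\eqref{eq1} and the optimisation leading to~\eqref{eq: part sizes, product-extremal inside Turan family} records the limiting value of $\log P(G)/\binom{n}{2}$ over members $G$ of $\mathcal{T}_{r,d}(a,n)$ with canonical partition having $v_0=x$ and $v_1=\dots=v_{r-1}=(1-x)/(r-1)$. In particular $\pi_{r,d}(a)=\max_x f_{r,d}(x)=f_{r,d}(x_{r\star}(a,d))$, and $x_{r\star}(a,d)\in(0,1)$ strictly, as is immediate from~\eqref{eq: def of xstar} together with the inequality $(a+1)^r>a(a-d)^{r-1}$.

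For (a), if $0\leq d_1<d_2\leq a-1$ and $x^*:=x_{r\star}(a,d_2)>0$, then
\[ \pi_{r,d_1}(a) \geq f_{r,d_1}(x^*) > f_{r,d_2}(x^*) = \pi_{r,d_2}(a), \]
since the coefficient of $-x^2$ strictly increases with $d$. For (b), observe that $f_{r+1,d}(0)=\log(a+1)-\tfrac{1}{r}\log((a+1)/a)=\pi_{r,0}(a)$ and $f'_{r+1,d}(0)=\tfrac{2}{r}\log((a+1)/a)>0$, so the maximum of $f_{r+1,d}$ is attained at some $x>0$ and strictly exceeds $f_{r+1,d}(0)$; combined with (a) this gives
\[ \pi_{r+1,d}(a) > f_{r+1,d}(0) = \pi_{r,0}(a) \geq \pi_{r,d'}(a). \]

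The case $r=1$, where the formula for $f_{r,d}$ degenerates, is a trivial edge case: directly $\pi_{1,d}(a)=\log(a-d)$, which is strictly decreasing in $d$, while $\pi_{2,d'}(a)>f_{2,d'}(0)=\log a \geq \log(a-d)=\pi_{1,d}(a)$ for any $d,d'\in [0,a-1]$. I foresee no real obstacle: the argument reduces to inspecting the sign of one partial derivative and evaluating $f_{r+1,d}$ at the boundary $x=0$, exploiting the convenient fact that in this boundary the $(r{+}1)$-part construction collapses onto the balanced $r$-part construction that achieves $\pi_{r,0}(a)$.
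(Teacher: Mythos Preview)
Your argument is correct. The paper itself dismisses this proposition as ``a straightforward exercise in combinatorial optimisation'' and gives no further details, so your proof is not so much an alternative as a fleshing-out of what the authors left implicit. The two monotonicity statements (a) and (b) are exactly what is needed, and your verification of each is clean: comparing $f_{r,d_1}$ and $f_{r,d_2}$ at the optimiser $x_{r\star}(a,d_2)$ handles (a), while the observation that $f_{r+1,d}(0)=\pi_{r,0}(a)$ together with $f'_{r+1,d}(0)>0$ handles (b). The boundary case $r=1$ is also treated correctly.
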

\begin{proof}
This is a straightforward exercise in combinatorial optimisation.
\end{proof}
\begin{remark}\label{remark: transcendance}
	Assuming Schanuel's conjecture from number theory and using Mih\u{a}ilescu's Theorem (his proof of Catalan's conjecture~\cite{Mihailescu04}), one can show that for all integers $a>d>1$ and $r\geq 2$ the quantities $x_{r\star}(a,d)$, $\pi_{r,d}(a)$ and $e^{\pi_{r,d}(a)}$ are all transcendental numbers --- see Proposition~\ref{prop: transcendentality} in the appendix.
\end{remark}

\subsection{Our results}\label{subsection: results}
With Conjecture~\ref{conjecture: entropy densities} stated, we can now give our main results.
Mubayi and Terry ~\cite{mt2} determined the value of $\mathrm{ex}_{\Pi}(4, a\binom{4}{2} + c)$ when $c \in \{0,1,2,4,5\}$:
\begin{align*}
\mathrm{ex}_{\Pi}(4, a\binom{4}{2})=\mathrm{ex}_{\Pi}&(4, a\binom{4}{2}+1)=\mathrm{ex}_{\Pi}(4, a\binom{4}{2}+2)=e^{\pi_{1,0}(a)} =  a,\\
\mathrm{ex}_{\Pi}(4, a\binom{4}{2}+4)&=e^{\pi_{2,0}(a)}= \sqrt{a(a+1)}, \\
\mathrm{ex}_{\Pi}(4, a\binom{4}{2}+5)&=e^{\pi_{3,0}(a)}=  a^{\frac{1}{3}}(a+1)^{\frac{2}{3}}.
\end{align*}
Note that in the cases above where $\mathrm{ex}_{\Pi}(4, a\binom{4}{2} + c)=a$, an asymptotically tight construction is simply the multigraph on $[n]$ in which every edge has multiplicity $a$ (which is the unique member of $\mathcal{T}_{1,0}(n)$).

For the case $c = 3$, Mubayi and Terry~\cite[Conjecture~6.3]{mt1} conjectured that for all $a\geq 2$,
\begin{equation}
\mathrm{ex}_{\Pi}(4, a\binom{4}{2}+3)=e^{\pi_{2,1}(a)}
\end{equation}
and proved this holds true when $a = 2$.  One of the main results of the present paper is the following theorem,  which proves the conjecture of Mubayi and Terry by determining the exact value of $\mathrm{ex}_{\Pi}(n, 4, \binom{4}{2}a+3)$ for all $n\geq 30$.
\begin{theorem}[Mubayi--Terry Conjecture]\label{theorem: mubayi Terry conjecture}
	For all integers $a\geq 2$ and $n\geq 30$,
	\[\mathrm{ex}_{\Pi}(n, 4, \binom{4}{2}a +3) = \Pi_{2,1}(a,n).\]	
\end{theorem}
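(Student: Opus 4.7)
The plan is to prove both directions of the equality. The lower bound $\mathrm{ex}_\Pi(n,4,6a+3) \geq \Pi_{2,1}(a,n)$ is immediate from Construction~\ref{construction: lower bound}: taking $G \in \mathcal{T}_{2,1}(a,n)$ attaining the product maximum, with canonical partition $V(G) = V_0 \sqcup V_1$ and $\vert V_0\vert$ close to $x_{2\star}(a,1)\, n$ as in~\eqref{eq: part sizes, product-extremal inside Turan family}, I would verify directly that $G$ is a $(4,6a+3)$-graph by inspecting each case $\vert X \cap V_0\vert \in \{0,1,2,3,4\}$ for a $4$-set $X$; the largest edge-sum equals $6a+3$ and is attained when $\vert X \cap V_0\vert \in \{1,2\}$. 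The substantive part is the matching upper bound, via structural analysis of an extremal $G \in \mathcal{F}(n,4,6a+3)$ maximising $P(G)$, carried out in three stages.

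The first stage is to show every edge of $G$ has multiplicity in $\{a-1,a,a+1\}$. If $w(uv) \geq a+2$, then the $(4,6a+3)$ constraint applied to $\{u,v,x,y\}$ forces $w(ux)+w(uy)+w(vx)+w(vy)+w(xy) \leq 5a+1$ for every $x,y$, so many of the edges incident to $uv$ are strictly below weight $a$. A local exchange, redistributing one unit of weight from $uv$ to a suitably chosen incident edge, then strictly increases $P(G)$, a contradiction. In the other direction, if $w(uv)\leq a-2$, I would use that $\log P(G) \geq \pi_{2,1}(a)\binom{n}{2}$ together with $\pi_{2,1}(a) > \log(a-1)$ to locate nearby edges of high weight on which a symmetric exchange improves $P(G)$.

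The second stage, and the crux of the argument, is to show $G \in \mathcal{T}_{2,1}(a,n)$. I would apply a Zykov-style symmetrisation: given two vertices $u,v$ with product-degrees $p(u) \geq p(v)$, replace $v$ by a copy of $u$ on all edges other than $uv$, and adjust $w(uv)$ downwards to restore feasibility on the $4$-sets $\{u,v,x,y\}$. A calculation should show that this operation does not decrease $P(G)$ at the extremum, and iterating partitions $V(G)$ into at most two twin-equivalence classes $V_0, V_1$ whose edge multiplicities match Construction~\ref{construction: lower bound} for $(r,d)=(2,1)$ — the only alternative two-class patterns with multiplicities in $\{a-1,a,a+1\}$ being infeasible for $(4,6a+3)$, as one checks by direct enumeration. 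In the third and final stage, once $G \in \mathcal{T}_{2,1}(a,n)$, the quantity $\log P(G)$ is a concave log-linear function of $\vert V_0\vert/n$ maximised at $x_{2\star}(a,1)$ by~\eqref{eq: part sizes, product-extremal inside Turan family}, giving $P(G) = \Pi_{2,1}(a,n)$ up to integrality corrections which can be pinned down exactly.

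The main obstacle will be the Zykov-style step: one must ensure that the downward adjustment of $w(uv)$ is quantitatively compensated by the gains from making $v$'s other edges match $u$'s, and then that iterating the operation terminates with the desired two-class pattern rather than collapsing to a single class. The hypothesis $n \geq 30$ is expected to enter through averaging arguments in the first stage that require sufficiently many $4$-sets containing a fixed edge, and in the third stage through the need for an integer value of $\vert V_0 \vert$ close enough to $x_{2\star}(a,1)\, n$ for the exact extremal conclusion to hold.
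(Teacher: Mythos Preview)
Your Stage~2 contains the principal gap. Zykov-style symmetrisation does \emph{not} terminate in at most two twin-classes. What it gives (this is essentially Proposition~\ref{prop: modifying G to get the clique structure} in the paper) is a partition $X_1,\ldots,X_m$ with $m$ potentially large, where edges inside each $X_i$ have multiplicity $a-1$ and edges between distinct $X_i,X_j$ have a common multiplicity $a$ or $a+1$. The missing idea is the analysis of the auxiliary graph $H$ on $[m]$ with $ij\in E(H)$ iff the $X_i$--$X_j$ multiplicity is $a+1$: the $(4,6a+3)$ constraint forces $H$ to be $K_3$- and $C_4$-free, hence of girth at least $5$; if $H$ is acyclic one can reorganise into $\mathcal T_{2,1}(a,n)$, but if $H$ contains a $(\geq 5)$-cycle one must instead exhibit a vertex of product-degree at most $a^{(4n-6)/5}(a+1)^{(n+6)/5}$ and remove it. That this is genuinely an obstruction, not a proof artefact, is shown by $n=5$: the product-extremal $(5,4,6a+3)$-graph is the $C_5$-based construction with $P=a^5(a+1)^5>\Pi_{2,1}(a,5)$, which has five twin-classes and is not in $\mathcal T_{2,1}(a,5)$. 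So the two-class conclusion simply fails without the cycle step, and the threshold $n\geq 30$ arises precisely from comparing the cycle-derived product-degree bound against $\Pi_{2,1}(a,n+1)/\Pi_{2,1}(a,n)$, not from the averaging or integrality considerations you suggest.

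Stage~1 is also not solid. For $w(uv)\geq a+2$, shifting a unit of weight to an incident edge may itself violate a $4$-set constraint, and you have not shown a recipient edge exists with room to grow; for $w(uv)\leq a-2$, the sketch (``locate nearby edges of high weight'') is not an argument. The paper sidesteps both issues: rather than local exchanges, it shows that a heavy edge or a heavy triangle forces a vertex of small product-degree (at most $(a+2)a^{n-1}$ or $(a+1)^2a^{n-2}$ respectively), removes that vertex, and iterates down to a multigraph in $\bigcap_{2\leq t\leq 4}\mathcal F(n',t,\binom{t}{2}a+t-1)$ where the symmetrisation can be run safely. The overall architecture is thus vertex-removal plus base case $n=6$, not a one-shot structural classification of the extremal $G$.
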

\noindent By Remark~\ref{remark: transcendance}, note that this theorem answers in the affirmative the question of Alon mentioned in Section~\ref{section: introduction}.
 Building on Theorem~\ref{theorem: mubayi Terry conjecture}, we also prove some further general cases of Conjecture~\ref{conjecture: entropy densities}.
\begin{theorem}\label{theorem: (5,5)}
	For all integers $a\geq 2$ and all $n\geq 124$,
	\[\mathrm{ex}_{\Pi}(n, 5, \binom{5}{2}a +5) = \Pi_{2,1}(a,n).\]	
\end{theorem}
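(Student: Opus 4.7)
The lower bound $\mathrm{ex}_{\Pi}(n, 5, 10a+5) \geq \Pi_{2,1}(a,n)$ follows directly from the construction. For any $G\in \mathcal{T}_{2,1}(a,n)$ with canonical partition $V_0\sqcup V_1$, a $5$-set $X$ containing $k$ vertices in $V_0$ has edge sum $(a-1)\binom{k}{2} + a\binom{5-k}{2} + (a+1)k(5-k)$, which one checks is maximised over $k\in\{0,\dots,5\}$ at $k=2$, with value $10a+5$. Hence $\Sigma_{2,1}(a,5)=10a+5$, and a product-optimal member of $\mathcal{T}_{2,1}(a,n)$ witnesses the inequality.

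For the matching upper bound, the natural approach is to reduce the $s=5$ problem to the $s=4$ problem already settled in Theorem~\ref{theorem: mubayi Terry conjecture}. The crucial elementary observation is that every $(4, 6a+3)$-graph is automatically a $(5, 10a+5)$-graph: for any $5$-set $X$, summing $e(Y)$ over the four-element subsets $Y$ of $X$ counts each edge of $X$ exactly three times, so $3e(X)\leq 5(6a+3)$, giving $e(X)\leq 10a+5$. It therefore suffices to show that any product-extremal $G\in \mathcal{F}(n, 5, 10a+5)$ is also an $(n, 4, 6a+3)$-graph, at which point Theorem~\ref{theorem: mubayi Terry conjecture} immediately yields $P(G)\leq \Pi_{2,1}(a,n)$, since $n\geq 124\geq 30$.

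To establish this reduction, I would assume for contradiction that a product-extremal $G\in \mathcal{F}(n, 5, 10a+5)$ contains a $4$-set $Y$ with $e(Y)\geq 6a+4$. Writing $d_Y(v) := \sum_{y\in Y}w(vy)$, the $(5, 10a+5)$-constraint forces $d_Y(v)\leq 4a+1$ for every $v\notin Y$, which in turn (for instance by AM--GM on the weights $w(vy)$) forces the partial product $\prod_{y\in Y}w(vy)$ to be strictly smaller than $(a+1)^4$. One would then track the multiplicative loss in $P(G)$ coming from this pointwise constraint on external edges to $Y$, compare it to the potential gain from having $e(Y)> 6a+3$ inside $Y$, and deduce $P(G)<\Pi_{2,1}(a,n)$, contradicting the extremality of $G$. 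A preliminary standard weight-reduction step (reducing to weights in $\{a-1,a,a+1\}$ without decreasing $P(G)$) should simplify this comparison.

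The principal obstacle I foresee is obtaining the quantitative estimates uniformly in $a$. A naive per-vertex AM--GM bound gives a loss factor of only $(1-O(1/a))$ per external vertex, which when aggregated over the $n-4$ external vertices yields an overall loss of $e^{-\Omega(n/a)}$; this is insufficient to dominate a bounded constant gain unless $n$ grows with $a$. Overcoming this apparent obstacle presumably requires a more refined argument---perhaps by considering several overlapping `bad' $4$-sets simultaneously, or by exploiting the structural consequences of $e(Y)\geq 6a+4$ at a global level via the monotonicity of $\Sigma_{r,d}(a,s)$ and $\pi_{r,d}(a)$ developed in Section~\ref{sec4}. The threshold $n\geq 124$ should then emerge as the smallest value for which this global comparison tips in our favour uniformly in $a\geq 2$, with the boundary case $a=2$ (where an edge weight of $a-1=1$ leaves little slack for manipulation) likely being the most delicate.
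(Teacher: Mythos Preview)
Your lower bound is correct, and your instinct to reduce to the $s=4$ case (Theorem~\ref{theorem: mubayi Terry conjecture}) is exactly what the paper does. However, the implementation you sketch---a one-shot contradiction showing that a product-extremal $G$ with a heavy $4$-set $Y$ must satisfy $P(G)<\Pi_{2,1}(a,n)$---has the gap you yourself identify, and that gap is genuine rather than merely apparent. The difficulty is not really uniformity in $a$; it is that the decomposition $P(G)=P(G[Y])\cdot P(G[Y,V\setminus Y])\cdot P(G[V\setminus Y])$ gives you no handle whatsoever on the last factor, so a local loss/gain comparison around $Y$ cannot by itself bound $P(G)$ against $\Pi_{2,1}(a,n)$.

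The paper resolves this by an \emph{iterative vertex-removal} argument rather than a direct contradiction. The relevant lemma (their Lemma~\ref{lemma: heavy 4-sets have low-deg vces}) shows that if $G\in\mathcal{F}(n+1,5,\binom{5}{2}a+5)$ contains any $4$-set with edge sum $\geq 6a+4$, then some vertex of $G$ has product-degree at most $(a+1)^{(n+5)/4}a^{(3n-5)/4}$; this comes from exactly the integral AM--GM computation you describe. One then deletes such a vertex and repeats, producing a chain $G=G_n,G_{n-1},\ldots,G_t$ which terminates either at $t=7$ or at a multigraph $G_t\in\mathcal{F}(t,4,6a+3)$. In the latter case one feeds $G_t$ directly into the proof of Theorem~\ref{theorem: mubayi Terry conjecture}; in the former one uses the crude base-case estimate $\mathrm{ex}_\Pi(7,5,\binom{5}{2}a+5)\le \tfrac{a}{a-1}\Pi_{2,1}(a,7)$.

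The reason this is uniform in $a$ is that both the removed vertex's product-degree and the target ratio $\Pi_{2,1}(a,i+1)/\Pi_{2,1}(a,i)\ge a^{(1-x_{2\star})i}(a+1)^{x_{2\star}i}$ are of the shape $a^i\cdot\bigl(\tfrac{a+1}{a}\bigr)^{c\,i}$; after dividing, the per-step saving is $\bigl(\tfrac{a+1}{a}\bigr)^{\frac{(4x_{2\star}-1)i-5}{4}}$. The crucial inequality is therefore $4x_{2\star}(a,1)>1$, which holds for all $a\ge 2$ (worst case $a=2$, where $x_{2\star}=\log(3/2)/\log(9/2)\approx 0.2695$). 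The exponents in the accumulated savings are then bounded below by $a$-free quantities, and the threshold $n\ge 124$ drops out of making $\frac{(n-7)}{8}\bigl((4x_{2\star}-1)(n+6)-10\bigr)\ge 2$ and $(n+n_0-1)(4x_{2\star}-1)\ge 10$ hold at $a=2$, $n_0\ge 6$.
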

\begin{theorem}\label{theorem: (6,7)}
	For all integers $a\geq 2$ and all $n\geq 503$,
	\[\mathrm{ex}_{\Pi}(n, 6, \binom{6}{2}a +7) = \Pi_{2,1}(a,n).\]	
\end{theorem}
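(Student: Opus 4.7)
The plan is to establish matching lower and upper bounds, following the overall strategy of Theorem~\ref{theorem: mubayi Terry conjecture}.

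For the lower bound, I would first verify by direct case analysis that $\Sigma_{2,1}(a,6)=15a+7$: for a 6-set $S$ with $k:=|V_0\cap S|$, the sum $(a-1)\binom{k}{2}+a\binom{6-k}{2}+(a+1)k(6-k)$ is maximised at $k=2$, giving $15a+7$. Hence every $G\in\mathcal{T}_{2,1}(a,n)$ is an $(n,6,15a+7)$-graph, and choosing $|V_0|=\lfloor x_{2\star}(a,1)n\rfloor$ as in~\eqref{eq: part sizes, product-extremal inside Turan family} yields a graph with product $\Pi_{2,1}(a,n)$.

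For the upper bound, let $G^*\in\mathcal{F}(n,6,15a+7)$ maximise $P$, with $n\geq 503$. The goal is to show $G^*\in \mathcal{T}_{2,1}(a,n)$ up to negligible adjustments. I would proceed in three steps. First, I would establish edge-multiplicity control: every edge of $G^*$ has weight in $\{a-1,a,a+1\}$. This follows from a weight-shifting argument: if some edge carries weight $\leq a-2$ or $\geq a+2$, one finds a companion edge whose weight may be perturbed in the opposite direction while keeping every 6-set summing to at most $15a+7$, strictly increasing $P(G^*)$. Second, I would prove the partition structure: defining $V_0$ as the set of vertices with abnormally many incident $(a-1)$-edges (equivalently, with product-degree below a threshold), I would show that intra-$V_0$ edges all have weight $a-1$, intra-$V_1$ edges all weight $a$, and all crossing edges weight $a+1$. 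Any deviation is ruled out by locating a 6-set that would violate the sum constraint and applying further weight-shifting. Third, the partition sizes are pinned down by the product-degree balance encoded in~\eqref{eq: x_star property} and the concavity analysis around~\eqref{eq: part sizes, product-extremal inside Turan family}.

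An alternative, possibly cleaner, route is a reduction to Theorem~\ref{theorem: mubayi Terry conjecture}: one shows that $G^*$ actually satisfies the stronger $(n,4,6a+3)$ condition. If a 4-set $T$ had $e(T)\geq 6a+4$, then for any pair $\{e,f\}$ outside $T$ the nine edges from $\{e,f\}$ to $T\cup\{e,f\}$ would sum to at most $9a+3$. Averaging this inequality over the $\binom{n-4}{2}$ pairs forces the induced multigraph on $V(G^*)\setminus T$ to have average edge-weight strictly below $a$, which for $n\geq 503$ is incompatible with $P(G^*)\geq\Pi_{2,1}(a,n)$; Theorem~\ref{theorem: mubayi Terry conjecture} then closes the problem.

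The main obstacle in either approach is the looseness of the 6-set constraint compared with the 4-set constraint: a 4-set may exceed the $6a+3$ threshold provided its surroundings compensate, so local exchange operations must be designed to avoid creating a bad 6-set elsewhere on the graph. This slack is precisely why the threshold on $n$ has to be taken significantly larger than in Theorems~\ref{theorem: mubayi Terry conjecture} and~\ref{theorem: (5,5)}; the hard work lies in quantifying how much room is needed to amortise these global corrections, and in verifying that $n\geq 503$ suffices.
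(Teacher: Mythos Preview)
Your lower bound is fine and matches the paper's one-line observation that $\binom{6}{2}a+7=\Sigma_{2,1}(a,6)$.

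Both of your upper-bound approaches, however, have genuine gaps, and neither follows the paper's actual route.

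\textbf{The averaging reduction to $(4,6a+3)$ does not work as stated.} Suppose a $4$-set $T$ satisfies $e(T)\geq 6a+4$. You correctly observe that for every pair $\{e,f\}\subseteq V\setminus T$, the nine edges in $\binom{T\cup\{e,f\}}{2}\setminus\binom{T}{2}$ sum to at most $9a+3$. But summing this over all pairs gives
\[
e(G[V\setminus T])+(n-5)\,e(G[V\setminus T,\,T])\leq (9a+3)\binom{n-4}{2},
\]
which is a bound on a \emph{weighted combination} of the internal and crossing edge-sums, not on $e(G[V\setminus T])$ alone. Nothing prevents the crossing edges from being light (so the inequality is slack) while the interior of $V\setminus T$ has average weight well above $a$. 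Hence the claim that the average edge-weight on $V\setminus T$ drops strictly below $a$ is unjustified, and with it the contradiction with $P(G^*)\geq\Pi_{2,1}(a,n)$. The underlying obstruction is that the $6$-set condition bounds what a \emph{pair} of outside vertices sends into $T$, not what a single vertex sends; so a heavy $4$-set does not directly produce a low product-degree vertex in an $(n,6,15a+7)$-graph.

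\textbf{The weight-shifting approach is too vague to be a proof.} The claim that an edge of weight $\leq a-2$ or $\geq a+2$ can always be paired with a companion edge and shifted without creating a bad $6$-set somewhere is exactly the global obstruction you flag at the end; you have not given a mechanism for controlling it, nor for deducing the bipartition structure of $V_0\sqcup V_1$ afterwards.

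\textbf{What the paper actually does.} The paper does not jump from $s=6$ to $s=4$. It first proves a ``heavy $5$-sets give low product-degree vertices'' lemma: if some $5$-set $U$ has $e(U)>\binom{5}{2}a+5$, then (since each outside vertex sends at most $\Sigma_{2,1}(a,6)-\Sigma_{2,1}(a,5)-1$ edges into $U$) integral AM--GM yields a vertex in $U$ with product-degree at most $(a+1)^{(n+8)/5}a^{(4n-8)/5}$. One then iteratively removes minimum product-degree vertices until either $n'=124$ (a crude AM--GM base case) or $G_{n'}\in\mathcal{F}(n',5,\binom{5}{2}a+5)$, at which point Theorem~\ref{theorem: (5,5)} applies. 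The calculation that closes the argument boils down to checking that $(5x_{2\star}-1)\frac{n+n'-1}{2}-8>0$ for $n\geq 503$ and $n'\geq 124$, together with an error-absorbing inequality at the base case; this is where the constant $503$ comes from. The key structural point you are missing is the intermediate step through $s=5$: the $6$-set condition controls single vertices against $5$-sets, which is exactly what the degree-removal machinery needs.
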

\begin{theorem}\label{theorem: (7,9)}
	There exists $N_0\in \mathbb{N}$ such that for all integers $a\geq 2$ and all $n\geq N_0$,
	\[\mathrm{ex}_{\Pi}(n, 7, \binom{7}{2}a +9) = \Pi_{2,1}(a,n).\]	
\end{theorem}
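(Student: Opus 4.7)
The plan is to follow the template of the proofs of Theorems~\ref{theorem: mubayi Terry conjecture}, \ref{theorem: (5,5)}, and \ref{theorem: (6,7)}, all of which resolve Conjecture~\ref{conjecture: entropy densities} for $(r,d)=(2,1)$, adapted to the new parameters $s=7$ and $q=21a+9$. For the lower bound, I would take a product-extremal $G\in\mathcal{T}_{2,1}(a,n)$, with $|V_0|$ the nearest integer value to $x_{2\star}(a,1)\,n$ that maximises the product. For any $7$-set $X$ containing exactly $k$ vertices of $V_0$,
\[
e(G[X])=(a-1)\binom{k}{2}+a\binom{7-k}{2}+(a+1)k(7-k)=21a+k(7-k)-\binom{k}{2},
\]
which attains its maximum value of $21a+9$ at $k\in\{2,3\}$, so $G\in\mathcal{F}(n,7,21a+9)$ and $\mathrm{ex}_{\Pi}(n,7,21a+9)\geq\Pi_{2,1}(a,n)$.

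For the upper bound, let $G$ be a product-maximal element of $\mathcal{F}(n,7,21a+9)$. I would first bound edge multiplicities, showing every edge of $G$ has multiplicity in a window $[a-C,a+C]$ for an absolute constant $C$. An edge $uv$ of multiplicity much larger than $a+1$ would force any $5$-set of common vertices to carry very small multiplicities in order to respect the $7$-set bound; a local swap replacing $uv$ and its neighbourhood with a configuration drawn from $\mathcal{T}_{2,1}(a,n)$ then strictly increases $P(G)$, contradicting product-extremality.

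The heart of the argument is a vertex-by-vertex analysis. Using a Zykov-style symmetrisation to force the product-degrees $p(u)$ to be roughly equal, and restricting the $7$-set bound to sets containing a fixed vertex $v$, I would show that each $v\in V(G)$ falls into one of two types: either its incident edges mostly have multiplicity $a+1$ (candidate for $V_0$), or the incident multiplicities split in a roughly $x_{2\star}(a,1):(1-x_{2\star}(a,1))$ ratio between $a+1$ and $a$ (candidate for $V_1$). Equation~\eqref{eq: x_star property} pinpoints this split as the only balance compatible with nearly-equal product-degrees. A bootstrap/cleaning step then upgrades this local two-type information into a global partition $V(G)=V_0\sqcup V_1$ matching Construction~\ref{construction: lower bound}, and optimising over $|V_0|$ via~\eqref{eq: pi(r,d)(a,n)} delivers $P(G)\leq\Pi_{2,1}(a,n)$.

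The main obstacle will be this bootstrap/cleaning step. Because the $7$-set constraint is substantially looser than the corresponding $4$-, $5$-, and $6$-set constraints in the earlier theorems, many more edge-multiplicity patterns on a given $7$-set are admissible, and ruling out \emph{ambiguous} local behaviours (vertices that do not cleanly fit either $V_0$ or $V_1$, or stray edges whose multiplicity deviates from $\{a-1,a,a+1\}$) requires a more delicate case analysis of $7$-sets crossing a candidate partition. I expect Proposition~\ref{prop: a-monotonicity of xstar} (via the strict inequality $x_{r\star}<1/((d+1)(r-1)+1)$) and Corollary~\ref{corollary: why (r-1)(d+1)+2} (which governs the relevant $\Sigma_{r,d}(a,s)$-gaps) to be essential for closing out such configurations, and the threshold $N_0$ in the statement will emerge precisely from making this cleaning quantitatively effective.
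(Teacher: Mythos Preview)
Your lower bound verification is fine, but your upper bound strategy diverges sharply from the paper and, as you yourself flag, contains a genuine gap at the ``bootstrap/cleaning'' step which you do not know how to close.

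The paper's proof is far shorter and does \emph{not} attempt any direct structural analysis of product-extremal $(n,7,21a+9)$-graphs. Instead it is a pure reduction to Theorem~\ref{theorem: (6,7)}. The single new ingredient is a ``heavy $6$-set'' lemma (Lemma~\ref{lemma: heavy 6-sets have low-deg vces}): if $G\in\mathcal{F}(n+1,7,\binom{7}{2}a+9)$ contains a $6$-set $S$ with $e(G[S])>\binom{6}{2}a+7$, then by the integral AM--GM inequality some vertex of $S$ has product-degree at most $(a+1)^{(n+11)/6}a^{(5n-11)/6}$. One then iteratively removes minimum-product-degree vertices until either (i) only $503$ vertices remain, or (ii) the remaining multigraph $G_{n'}$ lies in $\mathcal{F}(n',6,\binom{6}{2}a+7)$. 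In case~(ii), Theorem~\ref{theorem: (6,7)} gives $P(G_{n'})\leq\Pi_{2,1}(a,n')$ directly; in case~(i), a crude AM--GM bound on $P(G_{503})$ suffices. Multiplying out the removed product-degrees against $\Pi_{2,1}(a,i+1)/\Pi_{2,1}(a,i)$ via Corollary~\ref{corollary: ratio Pi_{2,1}(a, n+1)/Pi_{2,1}(a,n)} and checking the inequality $\frac{6x_{2\star}-1}{2}(n+n'-1)-11>0$ for $n$ large then yields $P(G)\leq\Pi_{2,1}(a,n)$.

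Your Zykov-symmetrisation and two-type classification programme is not needed here and is not what the template of Theorems~\ref{theorem: (5,5)} and~\ref{theorem: (6,7)} actually does: those proofs are also degree-removal reductions to the next-lower $s$, not ab initio structure arguments. The structural work (Proposition~\ref{prop: modifying G to get the clique structure}, the auxiliary graph $H$, etc.) appears only once, at the base level $s=4$ in the proof of Theorem~\ref{theorem: mubayi Terry conjecture}, and every higher $s$ is handled by stepping down.
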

\noindent Applying~\eqref{eq: MT counting}, we have the following counting corollary to Theorems~\ref{theorem: mubayi Terry conjecture}--\ref{theorem: (7,9)}:
\begin{corollary}
For all integers $a\geq 2$ and all $s\in \{4,5,6,7\}$,  
\[\bigl\vert \mathcal{F}(n,s, \Sigma_{2,1}(a,s)-\binom{s}{2})\Bigr\vert = e^{\pi_{2,1}(a)\binom{n}{2}+o(n^2)},\]
with $\pi_{2,1}(a)$ the quantity given in~\eqref{eq: pi(r,d)(a,n)}. 
\end{corollary}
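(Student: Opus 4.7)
The plan is to derive the corollary as an immediate consequence of Theorems~\ref{theorem: mubayi Terry conjecture}--\ref{theorem: (7,9)} combined with the Mubayi--Terry container-based counting identity~\eqref{eq: MT counting}. Concretely, I would fix $a\geq 2$ and $s\in\{4,5,6,7\}$, set $q_s$ equal to the edge-threshold appearing in the corresponding theorem (that is $q_4=6a+3$, $q_5=10a+5$, $q_6=15a+7$, $q_7=21a+9$), and check first that $q_s=\Sigma_{2,1}(a,s)$. This is a quick direct computation: for any $G\in\mathcal{T}_{2,1}(a,n)$ with canonical partition $V_0\sqcup V_1$ and any $s$-set $X\subseteq V(G)$, writing $|V_0\cap X|=j$ gives
\[e(G[X])=\binom{j}{2}(a-1)+\binom{s-j}{2}a+j(s-j)(a+1)=\binom{s}{2}a+\bigl(j(s-j)-\binom{j}{2}\bigr),\]
and optimising the integer quantity $j(s-j)-\binom{j}{2}$ over $j\in\{0,1,\ldots,s\}$ yields exactly $c_s\in\{3,5,7,9\}$ (attained at $j=2$, with an additional optimum at $j=3$ for $s=7$). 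Hence $q_s=\Sigma_{2,1}(a,s)$, and the multigraph family in the corollary's left-hand side is $\mathcal{F}(n,s,q_s-\binom{s}{2})$.

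Once this identification is made, the rest is essentially formal. By the four theorems, $\mathrm{ex}_\Pi(n,s,q_s)=\Pi_{2,1}(a,n)$ for all $n$ sufficiently large. Taking $\binom{n}{2}$-th roots and passing to the limit, the definition of $\pi_{2,1}(a)$ recorded in~\eqref{eq: pi(r,d)(a,n)} gives
\[\mathrm{ex}_\Pi(s,q_s)=\lim_{n\to\infty}\Pi_{2,1}(a,n)^{1/\binom{n}{2}}=e^{\pi_{2,1}(a)}.\]
Since $q_s>\binom{s}{2}$ for $a\geq 2$, equation~\eqref{eq: MT counting} applies and yields
\[\bigl\vert\mathcal{F}(n,s,q_s-\tbinom{s}{2})\bigr\vert=\mathrm{ex}_\Pi(s,q_s)^{\binom{n}{2}+o(n^2)}=e^{\pi_{2,1}(a)\binom{n}{2}+o(n^2)},\]
which is exactly the claimed bound. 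There is no real obstacle here: the theorems do all the extremal work, and~\eqref{eq: MT counting} is the black-box bridge from the Tur\'an-type product problem to the counting problem. The only genuinely verifiable point is the arithmetic identification $q_s=\Sigma_{2,1}(a,s)$, which is the short optimisation above.
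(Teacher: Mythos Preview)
Your proposal is correct and follows precisely the approach the paper intends: the corollary is stated immediately after Theorems~\ref{theorem: mubayi Terry conjecture}--\ref{theorem: (7,9)} with the single sentence ``Applying~\eqref{eq: MT counting}, we have the following counting corollary\ldots'', and your argument simply makes this explicit. The identification $q_s=\Sigma_{2,1}(a,s)$ is also noted in the paper (e.g.\ at the start of the proofs of Theorems~\ref{theorem: (5,5)} and~\ref{theorem: (6,7)}), so your short optimisation is the right verification; one tiny remark is that for $s=4$ the maximum of $j(s-j)-\binom{j}{2}$ is also attained at $j=1$, not only $j=2$, but this does not affect anything.
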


Mubayi and Terry~\cite[Theorem 10]{mt2} also proved that for all $r$ so that $\frac{s}{2}\leq r\leq s-1$ and all $n\geq s$,
 $\mathrm{ex}_{\Pi}(n, s, \Sigma_{r,0}(a,n))= \Pi_{r,0}(a,n)$, which establishes `half' of the $d=0$ case of Conjecture~\ref{conjecture: entropy densities}. In this paper, we complete a proof of the $d=0$ case of Conjecture~\ref{conjecture: entropy densities}, extending the result of Mubayi and Terry, and obtaining the following multigraph Tur\'an theorem:
\begin{theorem}[Multigraph Tur\'an theorem]\label{theorem: turan}
	For all $a,r,s \in \mathbb{N}$ with $s\geq r+1$ and all integers $n\geq 2r(s+2)+ r(s+2)\sqrt{s-1}$,
	\[\mathrm{ex}_{\Pi}\left(n, s, \Sigma_{r,0}(a,s)\right) = \Pi_{r,0}(a,n).\] 
	Furthermore, the set of product-extremal multigraphs is precisely the family $T^P_{r,0}(a,n)$.	
\end{theorem}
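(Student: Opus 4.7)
The lower bound $\mathrm{ex}_\Pi(n, s, \Sigma_{r,0}(a,s)) \geq \Pi_{r,0}(a,n)$ is immediate from Construction~\ref{construction: lower bound}: any balanced member $G^\star$ of $T^P_{r,0}(a,n)$ satisfies $P(G^\star) = \Pi_{r,0}(a,n)$ and meets the $s$-set constraint by the very definition of $\Sigma_{r,0}(a,s)$. For the upper bound and the uniqueness statement, the plan is to combine an edge-weight quantisation step with a Tur\'an-type structural analysis of the subgraph of heavy edges.

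\emph{Stage 1 (quantisation).} The chord $L(x) := \log a + (x-a)\log\bigl(\frac{a+1}{a}\bigr)$ through $(a, \log a)$ and $(a+1, \log(a+1))$ satisfies $\log x \leq L(x)$ for every non-negative integer $x$, with equality iff $x\in\{a, a+1\}$; this follows from concavity of $\log$ together with the fact that $(a,a+1)$ contains no integers. Summing this over all pairs gives
\[
 \log P(G) \;\leq\; \binom{n}{2}\log a + \Bigl(e(G) - a\binom{n}{2}\Bigr)\log\Bigl(\tfrac{a+1}{a}\Bigr),
\]
so the product is controlled by $e(G)$ alone, with equality only if every edge multiplicity lies in $\{a, a+1\}$. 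A local swap argument --- iteratively decreasing a multiplicity $w(uv)\geq a+2$ by one while increasing some multiplicity $w(xy)\leq a$ by one, choosing $xy$ to share as many $s$-sets with $uv$ as possible to preserve feasibility --- then shows that in an extremal $G$ every multiplicity may be assumed to lie in $\{a, a+1\}$.

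\emph{Stage 2 (Tur\'an structure).} With multiplicities in $\{a, a+1\}$, let $H$ denote the graph on $[n]$ of edges with $w(uv) = a+1$. The $s$-set constraint becomes $e(H[X]) \leq B_s$ for every $s$-subset $X$, where $B_s$ is the number of between-part pairs in a balanced $r$-partition of $[s]$ --- equivalently, the edge count of the Tur\'an graph on $s$ vertices with $r$ parts. Since $P(G)=a^{\binom{n}{2}-e(H)}(a+1)^{e(H)}$ is monotone in $e(H)$, the problem reduces to maximising $e(H)$ under this local constraint. I would then prove the structural lemma: for $n\geq 2r(s+2) + r(s+2)\sqrt{s-1}$, any graph $H$ on $[n]$ with $e(H[X])\leq B_s$ for all $s$-subsets $X$ satisfies $e(H) \leq e(T(n,r))$, with equality only if $H$ is the balanced complete $r$-partite graph $T(n,r)$. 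Combining this with Stage 1 yields $P(G) \leq \Pi_{r,0}(a,n)$, and tracking equality throughout forces $G\in T^P_{r,0}(a,n)$.

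The main obstacle is the structural lemma of Stage 2. Naive double-counting gives only $e(H) \leq B_s\cdot n(n-1)/(s(s-1))$, which exceeds $e(T(n,r))$ by $\Theta(n)$ for small $s$. Moreover, the uniqueness of $T(n,r)$ genuinely requires a large-$n$ hypothesis: for $(n,s,r)=(5,4,2)$, the $5$-cycle together with the chord $\{1,3\}$ contains a triangle, has six edges matching $e(T(5,2))$, and yet every $4$-subset spans at most four of its edges. To close this gap, I would argue that if $H$ contains a copy of $K_{r+1}$ then extending this clique into $s$-subsets in $\Theta(n^{s-r-1})$ different ways and averaging the local constraints produces a sharp upper bound on the number of $H$-edges incident to the clique; combined with an independent bound on the remaining edges of $H$, this should force $e(H) < e(T(n,r))$ once $n$ exceeds the stated threshold, which should emerge as the transition point of this counting argument.
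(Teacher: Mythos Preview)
Your approach is genuinely different from the paper's, and both of its stages contain real gaps.

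\textbf{Stage 1.} The chord bound is fine and does give $\log P(G)\leq \binom{n}{2}\log a + (e(G)-a\binom{n}{2})\log\frac{a+1}{a}$. But the local swap you propose to force multiplicities into $\{a,a+1\}$ is not justified: increasing $w(xy)$ by one can violate the constraint on an $s$-set containing $xy$ but not $uv$, and ``sharing as many $s$-sets with $uv$ as possible'' does not guarantee all such sets have slack. You would need to prove that a feasibility-preserving swap always exists, and this is not obvious. Without it, the chord bound only reduces the problem to bounding $e(G)$ over all of $\mathcal{F}(n,s,\Sigma_{r,0}(a,s))$ --- the \emph{sum} version of the problem for multigraphs --- which is itself non-trivial and not the simple-graph problem you treat in Stage~2.

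\textbf{Stage 2.} Even granting Stage~1, your structural lemma --- that a simple graph $H$ with $e(H[X])\leq e(T(s,r))$ for every $s$-set $X$ has $e(H)\leq e(T(n,r))$, with equality only for $T(n,r)$ --- is precisely a case of Erd\H{o}s's problem $\mathrm{ex}(n,s,q)$ from the introduction. Only asymptotic results are available in general, and your sketch (extending a $K_{r+1}$ into $s$-sets and averaging) does not visibly yield the exact bound, let alone the specific threshold $2r(s+2)+r(s+2)\sqrt{s-1}$.

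\textbf{What the paper does instead.} The paper avoids both difficulties by a degree-removal argument (Lemma~\ref{lemma: turan degree-removal}): if $G\in\mathcal{F}(n,s',\Sigma_{r,0}(a,s'))$ but $G\notin\mathcal{F}(n,s'-1,\Sigma_{r,0}(a,s'-1))$ for some $s'\leq s$, then $G$ contains a vertex of product-degree at most $(a/(a+1))^{c(n)}\cdot \Pi_{r,0}(a,n)/\Pi_{r,0}(a,n-1)$ for an explicit positive $c(n)$. Iteratively deleting such vertices either drives the vertex count down to $r(s+2)$ (where crude AM--GM suffices) or produces $G_{n_0}\in\bigcap_{2\leq t\leq s}\mathcal{F}(n_0,t,\Sigma_{r,0}(a,t))$. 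The latter membership for $t=2$ forces all multiplicities $\leq a+1$, and for $t=r+1$ forces the $(a+1)$-edge graph to be $K_{r+1}$-free; Tur\'an's theorem then applies directly with no further structural lemma needed. The stated threshold on $n$ comes out of balancing the accumulated product-degree savings against the AM--GM bound at the base case. In short, the paper trades your hard global structural lemma for a sequence of easy local degree estimates plus ordinary Tur\'an.
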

As further evidence in favour of Conjecture~\ref{conjecture: entropy densities}, we prove that, for $r,d$ fixed, if Conjecture~\ref{conjecture: entropy densities} holds for pairs $(s,q)=(s,\Sigma_{r,d}(a,s))$ for all $a$ sufficiently large, then  Conjecture~\ref{conjecture: entropy densities} also holds for $(s+1,\Sigma_{r,d}(a,s+1))$ and all $a$ sufficiently large. In other words, one can `step up' and deduce higher cases of Conjecture~\ref{conjecture: entropy densities} from the corresponding lower cases.
\begin{theorem}[Step-up in the conjecture]\label{theorem: step-up}
	Let $r,d\in \mathbb{N}$ and let $s\geq  (r-1)(d+1)+2$. Suppose that there exist $a_0, n_0$ such that for all $a\geq a_0$ and $n\geq n_0$ we have
	\begin{align*}
	\mathrm{ex}_{\Pi}(n, s, \Sigma_{r,d}(a,s)) = \Pi_{r,d}(a,n).
	\end{align*}
	Then there exists $a_1\geq a_0$ and $n_1\geq n_0$ such that for all $a\geq a_1$ and $n\geq n_1$, we have
	\begin{align*}
	\mathrm{ex}_{\Pi}(n, s+1, \Sigma_{r,d}(a,s+1)) = \Pi_{r,d}(a,n).
	\end{align*}
\end{theorem}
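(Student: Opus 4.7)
Write $q_s := \Sigma_{r,d}(a,s)$, $q_{s+1} := \Sigma_{r,d}(a,s+1)$, and $\Delta := q_{s+1}-q_s$. The plan is to induct on $n$, with the hypothesis of the theorem providing the final structural reduction. Fix $a\geq a_1$ (to be chosen large enough), and suppose inductively that $\mathrm{ex}_\Pi(n',s+1,q_{s+1})\leq \Pi_{r,d}(a,n')$ for all $n_1 \leq n' < n$; let $G$ be a product-extremal member of $\mathcal{F}(n, s+1, q_{s+1})$. For any $v\in V(G)$, $G-v \in \mathcal{F}(n-1, s+1, q_{s+1})$, so by the inductive hypothesis $P(G-v)\leq \Pi_{r,d}(a, n-1)$ and hence $p(v) = P(G)/P(G-v) \geq P(G)/\Pi_{r,d}(a,n-1)$. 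If $P(G) \leq \Pi_{r,d}(a,n)$ we are done; otherwise every vertex satisfies $p(v) > \Pi_{r,d}(a,n)/\Pi_{r,d}(a,n-1)$, a quantity matching the product-degree in the canonical extremal $\mathcal{T}_{r,d}(a,n)$ to leading order.

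The crux of the argument is the structural claim: for $a \geq a_1$ sufficiently large, any such product-extremal $G$ is automatically an $(s, q_s)$-graph. Granting this, the hypothesis of the theorem applied to $G$ yields $P(G)\leq \mathrm{ex}_\Pi(n, s, q_s) = \Pi_{r,d}(a, n)$, completing the induction. To prove the claim, suppose for contradiction that some $s$-set $X\subset V(G)$ satisfies $e(X)\geq q_s+1$. The $(s+1,q_{s+1})$-property then forces $\sum_{x\in X} w(vx) \leq \Delta-1$ for every $v\notin X$, and AM-GM on these $s$ multiplicities gives $\prod_{x\in X} w(vx)\leq ((\Delta-1)/s)^s$. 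Multiplying over all $v\notin X$ and using the identity $\prod_{x\in X} p(x) = P(G[X])^2 \prod_{v\notin X}\prod_{x\in X} w(vx)$ together with the lower bound on $p(x)$, I obtain
\[
\left(\frac{\Pi_{r,d}(a,n)}{\Pi_{r,d}(a,n-1)}\right)^{s} \;<\; P(G[X])^2 \left(\frac{\Delta-1}{s}\right)^{s(n-s)}.
\]

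Taking logarithms, using $\log \Pi_{r,d}(a,n) = \pi_{r,d}(a)\binom{n}{2} + O(n)$ and the crude bound $P(G[X])\leq (a+1)^{\binom{s}{2}}$, this rearranges to the inequality $\pi_{r,d}(a) < \log((\Delta-1)/s) + O(\log(a)/(n-s))$. The contradiction will follow once I establish the strict gap $e^{\pi_{r,d}(a)} > (\Delta-1)/s$ for all $a$ sufficiently large. An asymptotic expansion via~\eqref{eq: pi(r,d)(a,n)} and~\eqref{eq: def of xstar}, based on the limit $x_{r\star}(a,d) \to 1/K$ with $K := (r-1)(d+1)+1$, yields $e^{\pi_{r,d}(a)} = a + 1 - (d+1)/K + o(1)$. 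A matching analysis of the sum-extremal $(s+1)$-vertex partitions in $\mathcal{T}_{r,d}(a,s+1)$, relying on the structural toolkit developed in Section~\ref{sec4}, gives $(\Delta-1)/s = a + 1 - (\gamma_{r,d,s}+1)/s$ for an integer $\gamma_{r,d,s}\geq 0$ recording the correction from adding a single vertex to the $s$-optimal partition. A direct integer-programming check shows $(\gamma_{r,d,s}+1)/s > (d+1)/K$ for all $s\geq (r-1)(d+1)+2$, which is precisely the required strict gap; taking $n_1$ large relative to $a_1$ then absorbs the error term and yields the contradiction.

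For the base case $n = n_1$, the same deletion--AM-GM chain runs using the trivial bound $P(G-v)\leq (a+1)^{\binom{n_1-1}{2}}$ in place of the inductive bound, provided $n_1$ is chosen large enough relative to $a_1$ and the gap above. The main obstacle is the verification of the strict uniform separation $e^{\pi_{r,d}(a)} > (\Delta-1)/s$: since both quantities are of the form $a + O(1)$, establishing it requires the precise leading-order expansion of $\pi_{r,d}(a)$ (controlled by $x_{r\star}(a,d)$) together with an exact combinatorial description of the minimum-degree vertex in sum-extremal $(s+1)$-vertex members of $\mathcal{T}_{r,d}(a,s+1)$; both ingredients are exactly the kinds of computations foreshadowed in Section~\ref{sec4}. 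The hypothesis of the theorem itself enters the argument only at the very end, once $G$ has been shown to be an $(s, q_s)$-graph.
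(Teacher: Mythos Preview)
Your overall strategy—show that in a product-extremal $(n,s+1,q_{s+1})$-graph any heavy $s$-set forces a vertex of product-degree below $\Pi_{r,d}(a,n)/\Pi_{r,d}(a,n-1)$, hence no heavy $s$-set can exist and the hypothesis for $(s,q_s)$ applies—is exactly the paper's approach. But your execution has a genuine gap concerning uniformity in $a$.

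Your key inequality, after dividing through, reads $\pi_{r,d}(a) < \log((\Delta-1)/s) + O(\log a/(n-s))$, and you propose to contradict it via the strict gap $e^{\pi_{r,d}(a)} > (\Delta-1)/s$. The problem is that this gap, while positive, is only of size $\Theta(1/a)$ on the logarithmic scale: both $e^{\pi_{r,d}(a)}$ and $(\Delta-1)/s$ equal $a+O(1)$, so $\pi_{r,d}(a) - \log((\Delta-1)/s) = \Theta(1/a)$. Meanwhile your error term $O(\log a/(n-s))$—which arises from your crude bound $P(G[X])\leq (a+1)^{\binom{s}{2}}$ (itself unjustified when edges may have multiplicity exceeding $a+1$) or any bound of the form $P(G[X]) = O(a^{\binom{s}{2}})$—does \emph{not} shrink with $a$ for fixed $n$. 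The contradiction therefore only fires when $n \gtrsim a\log a$; your phrase ``taking $n_1$ large relative to $a_1$'' controls the error at $a=a_1$ but not at arbitrarily large $a\geq a_1$, so you do not obtain an $n_1$ that works uniformly for all $a\geq a_1$ as the theorem requires.

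The paper repairs this in two ways. First, it tracks the excess $x := e(X)-q_s-1$: one has $P(G[X])\leq (a+1+\tfrac{2x}{s(s-1)})^{\binom{s}{2}}$ while each $v\notin X$ sends at most $\Delta-1-x$ edges into $X$, and the trade-off between these (for $n$ moderately large) makes $x=0$ the worst case, where the $P(G[X])$ contribution is only $(a+1)^{\binom{s}{2}}$. Second, and crucially, it applies the \emph{integral} AM--GM inequality to the cross-edges, writing the bound as $a^{s-t}(a+1)^t$ rather than $((\Delta-1)/s)^s$. This lets every term be expressed as a power of $(a+1)/a$, so the $a$-dependence cancels and one obtains a vertex of product-degree at most $\tfrac{\Pi_{r,d}(a,i)}{\Pi_{r,d}(a,i-1)}(\tfrac{a+1}{a})^{-\varepsilon i/(2s)}$ with $\varepsilon>0$ bounded away from zero uniformly in $a$. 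The paper then removes such vertices iteratively (rather than arguing by a one-step induction), accumulating the savings to absorb the crude bound $P(G_N)\leq (a+1)^{\binom{N}{2}}$ at the terminal size $N$; this also cleanly handles what in your write-up is the problematic base case $n=n_1$.
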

In Section~\ref{sec-s} we derive a so-called `Step down in the base case' result (Theorem~\ref{theorem: step down}) which may prove useful in resolving Conjecture~\ref{conjecture: entropy densities} and also provides more evidence in support of it.

\smallskip

Whilst we conjecture that Construction~\ref{construction: lower bound} yields extremal examples for the problem of determining $\mathrm{ex}_{\Pi}( s, q)$ for many choices of $q$, in general these constructions will not be extremal for other choices. In Section~\ref{construct} we provide other more intricate constructions based on iterated versions of Construction~\ref{construction: lower bound} that demonstrate this. 

We also prove the following theorem, which determines the asymptotic behaviour of $\mathrm{ex}_{\Pi}(n, s,q)$ in the `sparse' case where $q\leq 2\binom{s}{2}$. 
\begin{theorem}[Sparse case]\label{theorem: sparse case}
	Let $s\in \mathbb{Z}_{\geq 2}$ and let $q$ be an integer with $0\leq q \leq 2\binom{s}{2}$. Then the following holds:
	\begin{align*}
	\mathrm{ex}_{\Pi}(n,s,q)=\left\{ \begin{array}{ll}
	0 & \textrm{if } 0\leq q< \binom{s}{2}\textrm{ and } n\geq s\\
	1 & \textrm{if }q=\binom{s}{2}\textrm{ and } n\geq s\\
	2^{q-\binom{s}{2}} & \textrm{if }\binom{s}{2} < q < \binom{s}{2}+ \lfloor \frac{s}{2}\rfloor \textrm{ and }n\geq s\\
	2^{\Theta(n)} & \textrm{if }\binom{s}{2}+ \lfloor \frac{s}{2}\rfloor \leq q < \binom{s}{2}+ s-2\\
	2^{\bigl\lfloor \left(\frac{s-2}{s-1}\right)n\bigr\rfloor} & \textrm{if }q = \binom{s}{2}+ s-2 \textrm{ and } n\geq s\\
	2^{\mathrm{ex}(n, \{C_3, C_4, \ldots, C_{s}\})} & \textrm{if }q = \binom{s}{2}+ s-1 \textrm{ and }n\geq s\\
	2^{o(n^2)} & \textrm{if } \binom{s}{2}+ s\leq q< \binom{s}{2}+ \lfloor\frac{s^2}{4}\rfloor \\
	2^{\Theta(n^2)} & \textrm{if } \binom{s}{2}+\lfloor\frac{s^2}{4}\rfloor \leq q \leq 2\binom{s}{2}.
	\end{array}\right.
	\end{align*}
\end{theorem}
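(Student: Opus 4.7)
The plan is to reduce the sparse regime $q\leq 2\binom{s}{2}$ to classical Tur\'an-type problems for simple graphs. Since any edge of weight $0$ makes $P(G)=0$, I may restrict attention to $(n,s,q)$-graphs all of whose edge weights are at least $1$. Using the elementary inequality $k\leq 2^{k-1}$ for integers $k\geq 1$ (with equality iff $k\in\{1,2\}$),
\[
P(G)=\prod_{e}w(e)\leq \prod_{e}2^{w(e)-1}=2^{\,e(G)-\binom{n}{2}}.
\]
Setting $t:=q-\binom{s}{2}$ and letting $G^{*}$ denote the multigraph with weights $w(e)-1$, the constraint $e(G[S])\leq q$ on every $s$-set $S$ translates to $e(G^{*}[S])\leq t$, so $G^{*}$ is an $(n,s,t)$-multigraph and $\mathrm{ex}_{\Pi}(n,s,q)\leq 2^{\mathrm{ex}_{\Sigma}(n,s,t)}$. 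Conversely, given any simple graph $H$ on $[n]$ with $|E(H[S])|\leq t$ for every $s$-set $S$, putting weight $2$ on $H$ and weight $1$ on all other pairs yields an $(n,s,q)$-graph of edge-product $2^{|E(H)|}$, so $\mathrm{ex}_{\Pi}(n,s,q)\geq 2^{\mathrm{ex}(n,s,t)}$.

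The first three cases fall out directly. If $t<0$, any $s$-set must contain a zero-weight pair (otherwise $\binom{s}{2}$ positive integer weights would sum to at least $\binom{s}{2}>q$), so $P(G)=0$. If $t=0$, every $s$-set has weights summing to exactly $\binom{s}{2}$, which forces every edge weight to equal $1$ since $n\geq s$ ensures every pair lies in some $s$-set. For $0<t<\lfloor s/2\rfloor$, any collection of $t+1$ positive-weight edges of $G^{*}$ (counted with multiplicity) spans at most $2(t+1)\leq s$ vertices and therefore lies in some $s$-set, violating the $(n,s,t)$-constraint; thus $\mathrm{ex}_{\Sigma}(n,s,t)=t$ and $\mathrm{ex}_{\Pi}(n,s,q)\leq 2^{t}$. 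The matching lower bound is realised by placing $t$ disjoint weight-$2$ edges in $G$, which fits since $2t\leq s-2<n$.

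For the remaining cases, I invoke the classical simple-graph results on $\mathrm{ex}(n,s,t)$ recalled in the introduction. For $\lfloor s/2\rfloor\leq t\leq s-2$, $\mathrm{ex}(n,s,t)$ is linear in $n$; in particular $\mathrm{ex}(n,s,s-2)=\lfloor (s-2)n/(s-1)\rfloor$, realised by a vertex-disjoint union of trees on $s-1$ vertices. For $t=s-1$, Erd\H{o}s's observation identifies $\mathrm{ex}(n,s,s-1)$ with the Tur\'an number $\mathrm{ex}(n,\{C_{3},\ldots,C_{s}\})$. For $s\leq t<\lfloor s^{2}/4\rfloor$, the Kov\'ari--S\'os--Tur\'an and Erd\H{o}s--Stone--Simonovits theorems yield $\mathrm{ex}(n,s,t)=o(n^{2})$; and for $t\geq\lfloor s^{2}/4\rfloor$, $\mathrm{ex}(n,s,t)=\Theta(n^{2})$, with the balanced complete bipartite graph giving the lower bound.

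The main technical point is matching the $\mathrm{ex}_{\Sigma}$-based upper bound with the $\mathrm{ex}$-based lower bound. In the $\Theta$- and $o$-regimes this is immediate: the underlying simple graph of an $(n,s,t)$-multigraph is itself an $(n,s,t)$-graph, and edge multiplicities are bounded by $t$, so $\mathrm{ex}_{\Sigma}(n,s,t)=O(\mathrm{ex}(n,s,t))$, which suffices for the $\Theta$/$o$-statements. For the exact identities in the cases $0<t<\lfloor s/2\rfloor$ and $t=s-2$, the required equality $\mathrm{ex}_{\Sigma}(n,s,t)=\mathrm{ex}(n,s,t)$ is the subtlest ingredient: an edge of multiplicity $\geq 2$ in a sum-extremal $G^{*}$ would consume too much of the $s$-set budget to beat a simple-graph construction, and can therefore be replaced by disjoint simple edges without decreasing the total weight. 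This step is the main obstacle, but parallels the multigraph Tur\'an arguments of Bondy--Tuza and F\"uredi--K\"undgen cited in the introduction.
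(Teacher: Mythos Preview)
Your reduction via $k\leq 2^{k-1}$ is a clean device, and it legitimately handles all the order-of-magnitude statements (Cases $t<0$, $t=0$, $0<t<\lfloor s/2\rfloor$, the $2^{\Theta(n)}$, $2^{o(n^2)}$ and $2^{\Theta(n^2)}$ ranges). For those cases your argument is in fact tidier than the paper's, which treats each range by a separate ad hoc calculation; the single inequality $P(G)\leq 2^{\mathrm{ex}_{\Sigma}(n,s,t)}$ together with $\mathrm{ex}_{\Sigma}(n,s,t)\leq t\cdot\mathrm{ex}(n,s,t)$ does the job uniformly.

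The genuine gap is in the two exact identities, and most seriously for $t=s-1$. Your upper bound there reads $\mathrm{ex}_{\Pi}(n,s,q)\leq 2^{\mathrm{ex}_{\Sigma}(n,s,s-1)}$, and to match the claimed value $2^{\mathrm{ex}(n,\{C_3,\ldots,C_s\})}$ you need the exact equality $\mathrm{ex}_{\Sigma}(n,s,s-1)=\mathrm{ex}(n,s,s-1)$. You do not prove this, and you do not even flag it: your final paragraph mentions the needed equality only for $t=s-2$, not $t=s-1$. This equality is not something one can cite --- Bondy--Tuza and F\"uredi--K\"undgen concern the quadratic regime and give nothing exact in the subquadratic range --- and your ``$t+1$ edges span $\leq s$ vertices'' argument from the small-$t$ case breaks down once $2t\geq s$. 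The paper accordingly does \emph{not} pass through $\mathrm{ex}_{\Sigma}$ here at all. Its Case~6 is an induction on $n$: either $G\in\bigcap_{2\leq s'\leq s}\mathcal{F}(n,s',\binom{s'}{2}+s'-1)$, forcing all multiplicities into $\{1,2\}$ with the weight-$2$ subgraph of girth $>s$; or there is a largest $s_0<s$ and an $s_0$-set $X$ with $e(G[X])=\binom{s_0}{2}+s_0$, and then maximality of $s_0$ forces every edge from $X$ to $V\setminus X$ to have multiplicity exactly $1$, whence $P(G)=P(G[X])\cdot P(G[V\setminus X])$ factors and induction applies to the second factor. This decoupling is the crux of the argument and has no analogue in your sketch.

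For $t=s-2$ the same issue arises: you correctly identify $\mathrm{ex}_{\Sigma}(n,s,s-2)=\mathrm{ex}(n,s,s-2)$ as ``the main obstacle'' but then wave at Bondy--Tuza and F\"uredi--K\"undgen rather than prove it. The paper simply cites the Mubayi--Terry result for this case.
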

\noindent Note that in fact Theorem~\ref{theorem: turan} gives more precise information in the range $\binom{s}{2}+\lfloor\frac{s^2}{4}\rfloor \leq q \leq 2\binom{s}{2}$, showing that for all $n$ sufficiently large,
\[\mathrm{ex}_{\Pi}(n,s,q)= 2^{\mathrm{ex}(n, K_r)} \qquad \textrm{if }	q=\binom{s}{2}+ \mathrm{ex}(s, K_r) \textrm{ for some }r \textrm{ with } \ 3\leq r \leq s-1.\]
\noindent (Here $\mathrm{ex}(n, K_r)$ denotes as usual the Tur\'an number of the complete graph $K_r$ on $r$ vertices.) Theorem~\ref{theorem: sparse case} generalises and extends earlier results of Mubayi and Terry, who proved the case $q=\binom{s}{2}+s-2$ and the case $s=4$, $q=\binom{s}{2}+s-1=9$~\cite[Theorems 8(b) and 11]{mt2}.

 By a standard first moment and alterations argument, it is easy to see that $\mathrm{ex}(n, \{C_3, C_4, \ldots, C_{s}\})$ is of order at least $n^{1+\frac{1}{s-1}}$. Thus  Theorem~\ref{theorem: sparse case} shows that $\log \Bigl(\mathrm{ex}_{\Pi}(n,s,q)\Bigr)$ is of constant order when $\binom{s}{2}\leq q <\binom{s}{2}+\lfloor \frac{s}{2}\rfloor$, linear in $n$ when $\binom{s}{2}+\lfloor \frac{s}{2}\rfloor\leq q< \binom{s}{2}+s-1$, superlinear but subquadratic when $ \binom{s}{2}+s-1\leq q <  \binom{s}{2}+\lfloor\frac{s^2}{4}\rfloor $ and quadratic thereafter.

\smallskip 

{\bf Remark.} Since submitting the paper, the second author~\cite{vfr} has determined the value of $\text{ex}_\Pi \left (2r, \Sigma_{r,1}(a,2r) \right)$ for all integers $a,r \geq 2$. This result, combined with the proof of Theorem~\ref{theorem: step-up} implies that Conjecture~\ref{conjecture: entropy densities} is asymptotically true for $d=1$ and $a$ sufficiently large.

\section{Overview of our methodology}
Many of our proofs employ the same  underlying approach, which we now outline.
Given $s,q \in \mathbb N$, suppose one wishes to upper-bound $\text{ex}_\Pi (n,s,q)$ for all large enough $n\in \mathbb N$ by some parameter $ P_{n,s,q}$. Then one must show that given any $G \in \mathcal F(n,s,q)$, $P(G)\leq P_{n,s,q}$.

Our first step will typically be to show that given any such $G \in \mathcal F(n,s,q)$,  either
\begin{itemize}
\item[(i)] $G$ has a vertex $v$ whose product-degree is significantly smaller than the average product-degree of our candidate extremal multigraph in 
$\mathcal F(n,s,q)$, or \item[(ii)] $G \in  \mathcal F(n,s-1,q')$ for some well-chosen choice of $q' \in \mathbb N$.
\end{itemize}
Such assertions will be proven using the integral version of the AM-GM inequality
(Proposition~\ref{prop: integral AM-GM} below).

One can think of (ii) as `stepping down', allowing us to use previously obtained information about $\mathcal F(n,s-1,q')$. In particular, if we already know that 
$\text{ex}_\Pi (n,s-1,q')\leq P_{n,s,q}$ then in Case~(ii) we immediately obtain that
$P(G) \leq P_{n,s,q}$, as desired.

In Case (i), we  consider $G_{n-1}:=G\setminus \{v\}$. Repeating the previous argument we will again conclude $G_{n-1}$ has a vertex of relatively small product-degree or $G_{n-1} \in \mathcal F(n-1,s-1,q')$.

Thus, eventually we either produce many small product-degree vertices in $G$ or obtain some structural information about a subgraph of $G$; specifically that some $i$-vertex induced subgraph $G_i$ of $G$ belongs to $\mathcal F(i,s-1,q')$. In the former case this will  (combined with some information on the value of 
$\text{ex}_\Pi (m,s,q)$ for a small  $m\in \mathbb N$)  be enough to 
conclude that $P(G) \leq P_{n,s,q}$. In the latter case we may need to seek further information about $G_i$. For example, similar arguments may allow us to conclude $G_i$ has many vertices of low product-degree or contains an $i'$-vertex induced subgraph $G_{i'}$ that belongs to
$\mathcal F(i',s-2,q'')$ for some carefully chosen $q'' \in \mathbb N$.

Repeating such arguments will then give us enough structural information about $G$ to conclude $P(G) \leq P_{n,s,q}$.
For example, in the proof of Theorem~\ref{theorem: mubayi Terry conjecture} our argument allows us to restrict  to the case when every edge in $G$ has multiplicity $a-1,a$ or $a+1$ and every triple of vertices spans at most $3a+2$ edges. Then to upper-bound $P(G)$ we use a `vertex cloning' procedure to obtain a very well-structured intermediate multigraph $G'$ where $P(G)\leq P(G')$ (see Proposition~\ref{prop: modifying G to get the clique structure}). As $G'$ has  much cleaner structure compared to $G$, it is not too difficult to appropriately upper-bound $P(G')$ (and therefore $P(G)$).

\section{Preliminary results}\label{sec4}

In this section we prove a number of preliminary results that will be useful at various points in this paper.
\subsection{Properties of  $T^e_{r,d}(a,s)$ and $x_{r\star}(a,d)$}

The following proposition, together with the discussion after the statement of Conjecture~\ref{conjecture: entropy densities}, explains why the latter conjecture
involves the condition that  $s\geq (r-1)(d+1)+2$.
\begin{proposition}\label{prop: sum-extremal subgraphs, threshold for 2 vertices in extremal part}
Let $r\in \mathbb{Z}_{\geq 2}$, $a\in \mathbb{N}$, $d\in [0,a-1]$. Given $G\in T^e_{r,d}(a,s)$ we let $\sqcup_{i=0}^{r-1}V_i$ be the canonical partition of $G$. Then the following hold:
\begin{enumerate}[(i)]
	\item if $s\geq (r-1)(d+1)+2$, then there is a  $G\in T^e_{r,d}(a,s)$  so that $\vert V_0 \vert \geq 2$;
	\item if $s\leq (r-1)(d+2)+1$,  then there is a $G\in T^e_{r,d}(a,s)$  so that $\vert V_0\vert \leq 1$.
\end{enumerate}
\end{proposition}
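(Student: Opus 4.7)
The plan is to convert the problem into a one-dimensional discrete convex optimisation. Writing $v_i:=|V_i|$, the edge sum of a $G \in \mathcal{T}_{r,d}(a,s)$ is
\[e(G)=(a+1)\binom{s}{2} - (d+1)\binom{v_0}{2} - \sum_{i=1}^{r-1}\binom{v_i}{2},\]
so maximising $e(G)$ over $\mathcal{T}_{r,d}(a,s)$ is equivalent to minimising the separable convex function $F(v_0,\ldots,v_{r-1}) := (d+1)\binom{v_0}{2}+\sum_{i=1}^{r-1}\binom{v_i}{2}$ subject to $v_0+\cdots+v_{r-1}=s$ with each $v_i\in \mathbb{Z}_{\ge 0}$. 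For any fixed $v_0$, convexity of $\binom{v}{2}$ guarantees that $\sum_{i\ge1}\binom{v_i}{2}$ is minimised when $V_1,\ldots,V_{r-1}$ are as balanced as possible; let $B(s-v_0)$ denote this minimum. A direct computation gives $B(n+1)-B(n)=\lfloor n/(r-1)\rfloor$, which is nondecreasing in $n$.

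Setting $F^*(v_0):=(d+1)\binom{v_0}{2}+B(s-v_0)$, the optimisation reduces to minimising $F^*$ over $v_0\in\{0,1,\ldots,s\}$. The first difference
\[\Delta(v_0):=F^*(v_0+1)-F^*(v_0)=(d+1)v_0-\left\lfloor \frac{s-v_0-1}{r-1}\right\rfloor\]
is strictly increasing in $v_0$ (the term $(d+1)v_0$ grows by $d+1\ge 1$ at each step while the floor term is nonincreasing), and hence $F^*$ is strictly convex. Consequently the sign of
\[\Delta(1)=(d+1)-\left\lfloor \frac{s-2}{r-1}\right\rfloor\]
determines which side of $v_0=1$ the minimum lies on.

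For (i), the hypothesis $s\ge (r-1)(d+1)+2$ yields $\lfloor(s-2)/(r-1)\rfloor\ge d+1$ and hence $\Delta(1)\le 0$; by the strict convexity of $F^*$ the minimum is attained at some $v_0\ge 2$ (and if $\Delta(1)=0$ then both $v_0=1$ and $v_0=2$ are minimisers, so we may pick $v_0=2$). Taking $|V_0|$ equal to this argmin and $V_1,\ldots,V_{r-1}$ balanced produces the required $G\in T^e_{r,d}(a,s)$ with $|V_0|\ge 2$. For (ii), the hypothesis $s\le(r-1)(d+2)+1$ gives $(s-2)/(r-1)<d+2$, so $\lfloor(s-2)/(r-1)\rfloor\le d+1$ and $\Delta(1)\ge 0$; convexity then places a minimiser of $F^*$ at some $v_0\le 1$, yielding the desired $G$. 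The only non-automatic step is the monotonicity of $\Delta$ and the floor-function arithmetic translating the bounds on $s$ into the sign of $\Delta(1)$, but neither of these constitutes a substantive obstacle.
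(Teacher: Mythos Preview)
Your proof is correct and follows essentially the same approach as the paper: both arguments compute the change in the objective when a single vertex is moved between $V_0$ and one of the balanced parts $V_1,\ldots,V_{r-1}$, and then use the hypothesis on $s$ to determine the sign of that change. Your discrete-convexity framing via $F^*$ and $\Delta$ is a slightly more systematic repackaging of the paper's direct vertex-shifting calculation (indeed your $-\Delta(x)$ coincides with the paper's expression $(|V_{r-1}|-1)-(d+1)x$ for the gain in $e(G)$, since $\lceil (s-x)/(r-1)\rceil -1 = \lfloor (s-x-1)/(r-1)\rfloor$).
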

\noindent Proposition~\ref{prop: sum-extremal subgraphs, threshold for 2 vertices in extremal part} immediately implies that $s=(r-1)(d+1)+2$ is the threshold at which $\Sigma_{r,d}(a,s)<\Sigma_{r,d-1}(a,s)$ begins to hold.
\begin{corollary}\label{corollary: why (r-1)(d+1)+2}
For all integers $a,r,d$ with $a\geq 2$, $r\geq 2$ and $0\leq d <a$, we have
\begin{align*}
\Sigma_{r, d-1}(a,s)>\Sigma_{r, d}(a,s) && \textrm{if }&s\geq (r-1)(d+1)+2,\\
\Sigma_{r, d-1}(a,s)=\Sigma_{r, d}(a,s) && \textrm{if }&s<  (r-1)(d+1)+2.
\end{align*}
\end{corollary}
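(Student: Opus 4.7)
The corollary should follow essentially immediately from Proposition~\ref{prop: sum-extremal subgraphs, threshold for 2 vertices in extremal part} together with a one-line monotonicity observation, so the plan is to establish both inequalities by a lifting/projection argument between $\mathcal{T}_{r,d-1}(a,s)$ and $\mathcal{T}_{r,d}(a,s)$.

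The first step is to note the universal inequality $\Sigma_{r,d-1}(a,s) \geq \Sigma_{r,d}(a,s)$. Indeed, given any $G \in \mathcal{T}_{r,d}(a,s)$ with canonical partition $\sqcup_{i=0}^{r-1}V_i$, define $G'$ on the same partition by raising every edge multiplicity inside $V_0$ from $a-d$ to $a-d+1$ and leaving all other weights unchanged. Then $G' \in \mathcal{T}_{r,d-1}(a,s)$ and $e(G')=e(G)+\binom{|V_0|}{2}\geq e(G)$. Applying this to a sum-extremal $G \in T^e_{r,d}(a,s)$ yields the inequality.

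For the case $s\geq (r-1)(d+1)+2$, I will invoke part~(i) of Proposition~\ref{prop: sum-extremal subgraphs, threshold for 2 vertices in extremal part} to choose $G \in T^e_{r,d}(a,s)$ with $|V_0|\geq 2$. Performing the lifting above then gives $e(G')=e(G)+\binom{|V_0|}{2}>\Sigma_{r,d}(a,s)$, so $\Sigma_{r,d-1}(a,s)\geq e(G')>\Sigma_{r,d}(a,s)$, which is exactly the first assertion.

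For the case $s<(r-1)(d+1)+2$, i.e. $s\leq (r-1)(d+1)+1 = (r-1)((d-1)+2)+1$, I will apply part~(ii) of the same proposition with $d$ replaced by $d-1$: this produces $G \in T^e_{r,d-1}(a,s)$ whose canonical partition has $|V_0|\leq 1$. Since there are no edges inside a single vertex, the choice of multiplicity on $V_0^{(2)}$ is vacuous; in particular $G$ can be re-interpreted (with identical edge-weights) as an element of $\mathcal{T}_{r,d}(a,s)$, giving $\Sigma_{r,d}(a,s)\geq e(G)=\Sigma_{r,d-1}(a,s)$. Combined with the first step this forces equality. There is no real obstacle here beyond citing the proposition: the only thing to double-check is the arithmetic $s\leq (r-1)((d-1)+2)+1$, which matches the hypothesis of Proposition~\ref{prop: sum-extremal subgraphs, threshold for 2 vertices in extremal part}(ii) applied with parameter $d-1$.
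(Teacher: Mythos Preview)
Your proof is correct and follows essentially the same approach as the paper's own proof: both establish the trivial inequality $\Sigma_{r,d-1}(a,s)\geq \Sigma_{r,d}(a,s)$, then invoke Proposition~\ref{prop: sum-extremal subgraphs, threshold for 2 vertices in extremal part}(i) with parameter $d$ to get strictness when $s\geq (r-1)(d+1)+2$, and Proposition~\ref{prop: sum-extremal subgraphs, threshold for 2 vertices in extremal part}(ii) with parameter $d-1$ to force equality when $s\leq (r-1)(d+1)+1$. Your write-up is in fact slightly more careful in spelling out the arithmetic $s\leq (r-1)((d-1)+2)+1$ needed to invoke part~(ii) at parameter $d-1$.
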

\begin{proof}
Clearly we always have $\Sigma_{r, d-1}(a,s)\geq \Sigma_{r, d}(a,s)$. For $s\geq (r-1)(d+1)+2$, Proposition~\ref{prop: sum-extremal subgraphs, threshold for 2 vertices in extremal part}(i) implies there exists a multigraph $G\in T^e_{r,d}(a,s)$ so that $\vert V_0\vert \geq 2$. Considering the multigraph $G'$ in $\mathcal{T}_{r,d-1}(a,s)$ having the same canonical partition as $G$ then shows
\begin{align*}
\Sigma_{r,d-1}(a,s)\geq e(G')> e(G)=\Sigma_{r,d}(a,s).
\end{align*}
For $s< (r+1)(d+1)+2$, Proposition~\ref{prop: sum-extremal subgraphs, threshold for 2 vertices in extremal part}(ii) implies there exists a multigraph $G\in T^e_{r,d-1}(a,s)$ so that $\vert V_0\vert \leq 1$. Such a multigraph $G$ can be viewed as a member of $\mathcal{T}_{r,d}(a,s)$, whence $\Sigma_{r, d-1}(a,s)\leq \Sigma_{r, d}(a,s)$ and thus $\Sigma_{r, d-1}(a,s)=\Sigma_{r, d}(a,s)$. 	
\end{proof}
\begin{proof}[Proof of Proposition~\ref{prop: sum-extremal subgraphs, threshold for 2 vertices in extremal part}]
Let $G\in T^e_{r,d}(a,s)$, and let 	$\sqcup_{i=0}^{r-1}V_i$  be the associated partition of $V(G)$. Set $x:=\vert V_0\vert $. By Tur\'an's theorem and the maximality of $e(G)$, we have that $\sqcup_{i=1}^{r-1}V_i$ forms an equipartition of $[s]\setminus V_0$, and we may assume in particular that $\vert V_{r-1}\vert = \Bigl\lceil \frac{n-x}{r-1}\Bigr\rceil$ and $\vert V_1\vert = \Bigl\lfloor \frac{n-x}{r-1}\Bigr\rfloor$.

Suppose $s\geq(r-1)(d+1)+2$, and $x\leq 1$. Then $\vert V_{r-1}\vert \geq d+2$, and moving any vertex from $V_{r-1}$ to $V_0$ increases $e(G)$ by $\left(\vert V_{r-1}\vert -1\right)- (d+1)x\geq 0$. Thus we can achieve $x>1$ while remaining inside $T^e_{r,d}(a,s)$. This establishes (i).

Similarly, suppose $s\leq(r-1)(d+2)+1$ and $x\geq 2$. Then $\vert V_1\vert \leq d+1$, and moving any vertex from $V_0$ to $V_1$ increases $e(G)$ by $(d+1)(x-1)-\vert V_1\vert\geq 0$. Thus we can achieve $x<2$ while remaining inside $T^e_{r,d}(a,s)$. This establishes (ii).
\end{proof}
\noindent Using similar arguments to the ones above, one can in fact establish precisely the possible values for $\vert V_0\vert$.

\begin{proposition}[Growth and partition sizes of $T^e_{r,d}(a,n)$]\label{prop: sum-extremal}
	Let $r,a\in \mathbb{N}, d\in [0,a-1]$. Let $n\geq rd+r-d$, and let $t$ be the remainder when $n$ is divided by $rd+r-d$. Then 
	\[\Sigma_{r,d}(a,n+1)-\Sigma_{r,d}(a,n)=\left\{\begin{array}{ll}
(a+1)n-(d+1)\Bigl\lfloor\frac{n}{rd+r-d}\Bigr\rfloor & \textrm{ if }0\leq t\leq r-1,\\[10pt]
(a+1)n - (d+1) \Bigl\lfloor \frac{n}{rd+r-d}\Bigr \rfloor -  \Bigl \lfloor \frac{t-1}{r-1}\Bigr \rfloor
& \textrm{ if }r\leq t\leq rd+r-d-1.
\end{array} \right .\]
Furthermore, if $G\in T^e_{r,d}(a,n)$ has canonical partition $\sqcup_{i=0}^{r-1}V_i$, then 
\begin{align*}
\vert V_0\vert =\left\{\begin{array}{cl}
\frac{n}{rd+r-d} & \textrm{if } t=0,\\[10pt]
	\Bigl\lfloor\frac{n}{rd+r-d}\Bigr\rfloor  \textrm{ or } \Bigl\lceil\frac{n}{rd+r-d}\Bigr\rceil  & \textrm{if } 1\leq t \leq r-1,\\[10pt]
	\Bigl\lceil\frac{n}{rd+r-d}\Bigr\rceil  & \textrm{if } r\leq t\leq rd+r-d-1.
\end{array}\right.
\end{align*}
\end{proposition}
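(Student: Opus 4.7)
The plan is to reduce the proposition to an optimisation over the canonical partition sizes and then perform a careful case analysis. Set $m := rd + r - d = (r-1)(d+1) + 1$, and write $n = mk + t$ with $k = \lfloor n/m \rfloor$ and $0 \leq t < m$. For any $G \in \mathcal{T}_{r,d}(a,n)$ with canonical partition $\sqcup_{i=0}^{r-1} V_i$ and $x := |V_0|$, a direct calculation gives
\[
e(G) = (a+1)\binom{n}{2} - (d+1)\binom{x}{2} - \sum_{i=1}^{r-1}\binom{|V_i|}{2},
\]
so maximising $e(G)$ is equivalent to minimising the penalty $\Phi := (d+1)\binom{x}{2} + \sum_{i\geq 1}\binom{|V_i|}{2}$ subject to $\sum_i |V_i| = n$. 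By convexity (or Tur\'an's theorem) applied with $x$ fixed, the inner sum is minimised precisely when $|V_1|, \ldots, |V_{r-1}|$ are as equal as possible, so the problem reduces to choosing the right value of $x$.

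My first step would be to pin down the set of admissible values of $x$ at an optimum. Running the swap arguments from the proof of Proposition~\ref{prop: sum-extremal subgraphs, threshold for 2 vertices in extremal part} in both directions yields $(d+1)(x-1) \leq |V_i| \leq (d+1)x + 1$ for every $i \in [r-1]$. Summing these bounds over $i$ and using $\sum_{i\geq 1} |V_i| = n - x$ gives
\[
\frac{n+1-r}{m} \;\leq\; x \;\leq\; \frac{n+m-1}{m},
\]
and a short case split on $t$ shows that the integer solutions are exactly: $x = k$ when $t = 0$; $x \in \{k, k+1\}$ when $1 \leq t \leq r-1$; and $x = k+1$ when $r \leq t \leq m-1$. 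That both $k$ and $k+1$ are attained in the middle range is confirmed by a direct computation: the contributions $(d+1)\bigl[\binom{k+1}{2} - \binom{k}{2}\bigr]$ and $\binom{(d+1)k}{2} - \binom{(d+1)k+1}{2}$ to $\Phi$ cancel exactly when one switches from the $x=k$ to the $x=k+1$ nearly-balanced partition.

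With the partition structure of $T^e_{r,d}(a,n)$ in hand, I would compute the growth formula by constructing an optimal $(n+1)$-vertex multigraph from a suitable optimal $n$-vertex one, adding a single vertex to a smallest part $V_j$ with $j \in [r-1]$ and checking that the resulting partition agrees with the characterisation above applied at $n+1$. The new vertex contributes $(a+1)n - |V_j|$ to $e(G)$. Concretely: for $0 \leq t \leq r-2$, take $x_n = k$, so some $V_j$ has size $(d+1)k$ and the gain is $(a+1)n - (d+1)k$; for $t = r-1$, take the alternative extremal $x_n = k+1$ (whose partition has a unique small part of size $(d+1)k$) and add there, for the same gain; and for $r \leq t \leq m-1$, take $x_n = k+1$, in which case the nearly-equal partition of the remaining $n - k - 1 = (r-1)(d+1)k + t - 1$ vertices has smallest part size $(d+1)k + \lfloor (t-1)/(r-1)\rfloor$, yielding the second branch of the formula. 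The matching upper bound comes for free from the partition characterisation at $n+1$.

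The main obstacle is the bookkeeping at the boundary transitions $t = r-1 \to t = r$ (where the admissible set for $x$ shrinks from $\{k, k+1\}$ to $\{k+1\}$) and $t = m - 1 \to t = 0$ (where $k$ itself increments). The increment formula is continuous across these transitions, but verifying this requires computing the smallest non-$V_0$ part size on each side and checking that the ``add one vertex'' construction still produces a member of $T^e_{r,d}(a, n+1)$. None of this is conceptually difficult, but it demands careful tracking of the partition indices.
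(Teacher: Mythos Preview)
Your proposal is correct and follows essentially the same route as the paper: bound $|V_0|$ via the two vertex-swap inequalities (the paper uses $(v_0-1)(d+1)\leq v_1$ and $v_{r-1}-1\leq (d+1)v_0$ together with $v_1\leq (n-v_0)/(r-1)\leq v_{r-1}$, which is equivalent to your summing argument), deduce the admissible set for $|V_0|$ by the same case split on $t$, and then read off the increment. The only organisational difference is that the paper computes the increment directly as $(a+1)n-\min\bigl((d+1)v_0,\,v_1\bigr)$ after tabulating $v_1$ in each regime, whereas you build an optimal $(n+1)$-partition from a suitably chosen optimal $n$-partition and check it lands in $T^e_{r,d}(a,n+1)$; the two approaches entail the same boundary bookkeeping at $t=r-1$ and $t=m-1$.
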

\begin{proof}
Let $G\in T^e_{r,d}(a,n)$.	Let $\sqcup_{i=0}^{r-1}V_i$ be the associated partition of $V(G)=[n]$, and let  $v_i:=\vert V_i\vert$. The main work in the proposition is determining the possible values of the $v_i$s. We may assume without loss of generality that 
\[v_0\leq v_1 \leq v_2\leq \ldots \leq v_{r-1}.\]
By extremality of $T^e_{r,d}(a,n)$, we have that, for $i\in [r-1]$, $v_i=\Bigr\lfloor \frac{n-v_0+i-1}{r-1}\Bigr\rfloor$. Since $n\geq r$, we have that $v_0\geq 1$. By extremality, we know that the number of edges cannot increase if we shift a vertex from $V_0$ to $V_1$ or if we shift a vertex from $V_{r-1}$ to $V_0$. This implies that
\begin{align*}
\left(v_0-1\right)(d+1) - v_1\leq 0 \ \ \text{ and } \ \ -(d+1)v_0+ \left(v_{r-1}-1\right)\leq 0.
\end{align*}	
Using $v_1\leq \frac{n-v_0}{r-1}\leq v_{r-1}$ and rearranging these two inequalities, we get
\begin{align*}
\frac{1}{rd+r-d}\left(n-(r-1)\right)\leq v_0 \leq \frac{1}{rd+r-d}\left(n+ (d+1)(r-1)\right).
\end{align*}
Let $q$ and $t$ denote the quotient and remainder of $n$ when divided by $
rd+r-d$. The above inequality may thus be rewritten as
\[q-\frac{(r-1)-t}{rd+r-d}\leq v_0 \leq q+1 +\frac{t-1}{rd+r-d}. \]
Since $q$ is an integer and $t<rd+r-d$, the right-hand side inequality implies that $v_0\leq q+1$, and that $v_0\leq q$ if $t=0$. The left-hand side inequality, for its part, implies that $v_0\geq q$ and that for $t>r-1$ we have $v_0\geq q+1$. Summarising we have
\begin{align*}\label{eq: bounds on v_0}
v_0=\left\{\begin{array}{ll}
q & \textrm{if } t=0, \\
q \textrm{ or } q+1 & \textrm{if } 1\leq t \leq r-1, \\
q+1 & \textrm{if } r\leq t\leq rd+r-d-1.
\end{array} \right.\end{align*}
It readily follows from this that
\[v_1=\left\{\begin{array}{ll}
q(d+1) & \textrm{if } 0\leq t\leq r-2, \\
q(d+1)\textrm{ or } q(d+1)+1 & \textrm{if } t=r-1, \\
q(d+1) +\lfloor \frac{t-1}{r-1}\rfloor & \textrm{if } r\leq t\leq rd+r-d-1.
\end{array} \right.\]
Moreover at $t=r-1$, we have that $v_1=q(d+1)$ if and only if $v_0=q+1$. From this, we may easily read off the value of $\Sigma_{r,d}(a, n+1)-\Sigma_{r,d}(a,n)$: to obtain a multigraph in $T^{e}_{r,d}(a,n+1)$ from $G$, one must add a vertex to either $V_0$ or $V_1$. The former increases the number of edges by $(a+1)n-(d+1)v_0$ and the latter by $(a+1)n-v_1$. Thus
\begin{align*}
	\Sigma_{r,d}(a, n+1)-\Sigma_{r,d}(a,n)&=(a+1)n-\min\left((d+1)v_0, v_1\right)\\
&=\left\{ \begin{array}{ll}
(a+1)n - (d+1)q & \textrm{if }0\leq t\leq r-1\\
(a+1)n - q(d+1)-  \lfloor \frac{t-1}{r-1}\rfloor & \textrm{if }r\leq t\leq rd+r-d-1.
\end{array} \right.\end{align*}
Substituting $\Bigl\lfloor n/(rd+r-d)	\Bigr\rfloor$ for $q$ yields the statement of the proposition.
\end{proof}

\begin{proposition}\label{prop: a-monotonicity of xstar}
For any $r\in \mathbb{N}$, the function $x_{r\star}=x_{r\star}(a,d)$ is monotone increasing in $a$ (over the interval $a\in[d+1, \infty)$) and monotone decreasing in $d$ (over the interval $d\in [0, a-1]$). In particular, for any pair of integers $a>d>1$ in its domain, $x_{r\star}$ satisfies
\[  \frac{\log (d+2) -\log (d+1)}{r \log(d+2) -\log (d+1)}=x_{r\star}(d+1,d) \leq x_{r\star}(a,d) < \lim_{a \rightarrow \infty}x_{r\star}(a,d)= \frac{1}{d(r-1)+r}. \]
\end{proposition}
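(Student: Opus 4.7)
The plan is to first rewrite $x_{r\star}(a,d)$ in a form that separates the roles of $a$ and $d$ more clearly. Setting $N(a):=\log\tfrac{a+1}{a}$ and $M(a,d):=\log\tfrac{a+1}{a-d}$, both positive on the given domain, the defining formula~\eqref{eq: def of xstar} becomes
\[
x_{r\star}(a,d)=\frac{N(a)}{N(a)+(r-1)M(a,d)}.
\]
From this, monotonicity in $d$ is immediate: $N$ does not depend on $d$, while $M$ strictly increases in $d$ (since $a-d$ decreases), so the denominator grows and $x_{r\star}$ strictly decreases.

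For monotonicity in $a$, I would take the logarithmic derivative and check that $\frac{d}{da}\log x_{r\star}(a,d)$ is positive. A short computation reduces this to the inequality $N'M - NM' > 0$, which (after dividing by the positive quantity $NM$, and noting $N',M'<0$) is equivalent to $|N'|/N < |M'|/M$. Using
\[
|N'|=\tfrac{1}{a(a+1)},\qquad |M'|=\tfrac{d+1}{(a-d)(a+1)},
\]
multiplying through by $(a+1)$, and substituting $t=\tfrac{a+1}{a}$ on the left and $t=\tfrac{a+1}{a-d}$ on the right, the required inequality reads $\psi(1+\tfrac{1}{a})<\psi(1+\tfrac{d+1}{a-d})$, where $\psi(t):=\tfrac{t-1}{\log t}$ for $t>1$. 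The main lemma I would then prove is that $\psi$ is strictly increasing on $(1,\infty)$: its derivative has numerator $t\log t-t+1$, which vanishes at $t=1$ and has derivative $\log t>0$ for $t>1$. Since $d\geq 1$ forces $1+\tfrac{1}{a}<1+\tfrac{d+1}{a-d}$, this completes the $a$-monotonicity.

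Finally, the two-sided bound on $x_{r\star}(a,d)$ is simply a restatement of the monotonicity, combined with an evaluation at the endpoints of the range $a\in[d+1,\infty)$. At $a=d+1$ one has $\log(a-d)=0$, which yields the explicit value $(\log(d+2)-\log(d+1))/(r\log(d+2)-\log(d+1))$ stated in the proposition. For the upper bound, I would compute the limit as $a\to\infty$ via a first-order Taylor expansion: the numerator is $\log(1+\tfrac{1}{a})=\tfrac{1}{a}+O(a^{-2})$, and the denominator expands as $r\log(1+\tfrac{1}{a})-(r-1)\log(1-\tfrac{d}{a})=\tfrac{r+(r-1)d}{a}+O(a^{-2})$, so the ratio tends to $1/((r-1)d+r)$. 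Strict inequality below the limit is just the strict monotonicity in $a$.

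The only step requiring any care is the reduction of $a$-monotonicity to the monotonicity of $\psi(t)=(t-1)/\log t$; the remaining manipulations are routine derivative and Taylor-expansion computations.
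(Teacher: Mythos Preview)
Your proof is correct. The paper's own proof is the one-line ``This is a simple exercise in calculus,'' and your proposal is precisely that exercise carried out in full: the rewriting $x_{r\star}=N/(N+(r-1)M)$, the reduction of $a$-monotonicity to the monotonicity of $\psi(t)=(t-1)/\log t$, and the endpoint evaluations are all valid and together constitute exactly the routine computation the paper omits.
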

\begin{proof}
This is a simple exercise in calculus.
\end{proof}

\begin{proposition}\label{prop:diff in sigma when adding one vertex}
	For all natural numbers $r,d$ and $s\geq (r-1)(d+1)+2$, there exists $a_0$ such that for all $a\geq a_0$,
	\begin{align*}
\Sigma_{r,d}(a,s+1)-\Sigma_{r,d}(a,s)-1< \left(a +\frac{r-2+x_{r\star}(a,d)}{r-1}\right)s.
	\end{align*}
\end{proposition}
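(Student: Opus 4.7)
\emph{Plan.} The strategy is to compute $\Sigma_{r,d}(a,s+1)-\Sigma_{r,d}(a,s)$ exactly using Proposition~\ref{prop: sum-extremal}, and then show the resulting inequality becomes sharp (with a positive gap independent of $a$) in the limit $a\to\infty$, where $x_{r\star}(a,d) \to 1/(rd+r-d)$ by Proposition~\ref{prop: a-monotonicity of xstar}.

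First I would apply Proposition~\ref{prop: sum-extremal} with $n=s$ (valid since the hypothesis $s\geq (r-1)(d+1)+2$ gives $s > rd+r-d$). Writing $s=k(rd+r-d)+t$ with $0\leq t\leq rd+r-d-1$, the formula becomes
\[\Sigma_{r,d}(a,s+1)-\Sigma_{r,d}(a,s) = (a+1)s-(d+1)k-\delta(t),\]
where $\delta(t)=0$ for $t\leq r-1$ and $\delta(t)=\lfloor(t-1)/(r-1)\rfloor$ otherwise. Using the identity $a+\tfrac{r-2+x}{r-1} = (a+1)-\tfrac{1-x}{r-1}$, the desired inequality rearranges, after subtracting $(a+1)s$ from both sides, to
\[\frac{(1-x_{r\star}(a,d))s}{r-1} < (d+1)k+\delta(t)+1.\]

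Next, I would appeal to the monotonicity and limiting value of $x_{r\star}(a,d)$ from Proposition~\ref{prop: a-monotonicity of xstar} together with the useful identity $(r-1)(d+1)=rd+r-d-1$. Substituting $x_{r\star}(a,d)=1/(rd+r-d)$ (its supremum as $a\to\infty$) shows the left-hand side above simplifies to $(d+1)k+\tfrac{(d+1)t}{rd+r-d}$. Hence it would suffice to establish, with a positive gap independent of $a$, the strict ``limiting'' inequality
\[\frac{(d+1)t}{rd+r-d} < \delta(t)+1,\]
since then choosing $a_0$ large enough makes the error term $\tfrac{(1/(rd+r-d)-x_{r\star}(a,d))s}{r-1}$ smaller than this gap for all $a\geq a_0$.

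The remaining work is a case analysis on $t$. For $0\leq t\leq r-1$, the bound $\tfrac{(d+1)t}{rd+r-d}\leq \tfrac{(d+1)(r-1)}{rd+r-d} = 1-\tfrac{1}{rd+r-d} < 1 = \delta(t)+1$ is immediate. For $r\leq t\leq rd+r-d-1$, I would write $t=q(r-1)+j$ with $0\leq j\leq r-2$: when $j=0$ one has $\delta(t)+1=q$ and $\tfrac{(d+1)t}{rd+r-d} = q\cdot\tfrac{(r-1)(d+1)}{rd+r-d}<q$; when $j\geq 1$ one has $\delta(t)+1=q+1$, and the bound follows from $j(d+1)-q < (r-1)(d+1)+1 = rd+r-d$, which holds since $j\leq r-2$ and $q\geq 1$ (as $t\geq r$). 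The main obstacle I anticipate is sorting out this second sub-case cleanly, but once the decomposition $t=q(r-1)+j$ is in place it reduces to a short calculation.
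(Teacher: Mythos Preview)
Your proposal is correct and follows essentially the same approach as the paper: compute $\Sigma_{r,d}(a,s+1)-\Sigma_{r,d}(a,s)$ via Proposition~\ref{prop: sum-extremal}, rewrite the target inequality, and then invoke the limiting value $x_{r\star}(a,d)\to 1/(rd+r-d)$ from Proposition~\ref{prop: a-monotonicity of xstar}. The paper simply asserts the bound $\Sigma_{r,d}(a,s+1)-\Sigma_{r,d}(a,s)-1<(a+1)s-(d+1)s/(rd+r-d)$ as a consequence of Proposition~\ref{prop: sum-extremal} without spelling out the case analysis on $t$; your version makes that verification explicit, which is a reasonable (and arguably more careful) way to present the same argument.
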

\begin{proof}
By Proposition~\ref{prop: sum-extremal}, we have that 
\begin{align*}
\Sigma_{r,d}(a,s+1)-\Sigma_{r,d}(a,s)-1&< (a+1)s- (d+1)\frac{s}{rd+r-d}=\left(a+\frac{r-2}{r-1} +\frac{1}{(r-1)(rd+r-d)}\right)s.
\end{align*}
	By Proposition~\ref{prop: a-monotonicity of xstar}, for any fixed $r\in \mathbb{N}$, we have that $x_{r \star}=x_{r \star}(a,d)$ is monotonically increasing in $a$, and tends to $1/(rd+r-d)$ as $a\rightarrow \infty$. In particular, for all $a$ sufficiently large and any $c<\frac{r-2}{r-1}+\frac{(1/(rd+r-d))}{r-1}$, we have $c< \frac{r-2+x_{r \star}(a,d)}{r-1}$. Taken together with the inequality above, this establishes the proposition.	
\end{proof}

\subsection{The AM--GM inequality}
An important tool we will make use of is the integral version of the AM--GM inequality:
\begin{proposition}\label{prop: integral AM-GM}
Let $a,n\in \mathbb{N}$, $t\in  [0,n]$, and let $w_1, \ldots, w_n$ be non-negative integers with $\sum_{i=1}^n w_i= an +t$. Then the following hold:
\begin{enumerate}[(i)]
	\item $\prod_{i=1}^n w_i \leq a^{n-t}(a+1)^t$; 
	\item if $t\leq n-2$ and $w_1=a-1$, then $\prod_{i=1}^n w_i\leq (a-1)a^{n-t-2}(a+1)^{t+1}$.
\end{enumerate}
\end{proposition}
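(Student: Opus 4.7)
My plan is to prove both parts via a standard smoothing argument, with part (ii) reduced to part (i).

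For part (i), I would first dispose of the degenerate case when some $w_i=0$: then the product is $0$, so the bound holds trivially. Hence assume all $w_i$ are positive integers. The key observation is the following swap move: whenever there exist indices $i,j$ with $w_j \geq w_i + 2$, replacing the pair $(w_i, w_j)$ by $(w_i+1, w_j-1)$ preserves the sum and strictly increases the product, since
\[(w_i+1)(w_j-1) - w_i w_j = w_j - w_i - 1 \geq 1.\]
Iterating this move terminates (for instance because the product is strictly increasing yet bounded above by $(an+t)^n$, or by a direct monovariant on the sorted vector of weights). At termination, any two weights differ by at most $1$, so all weights take at most two consecutive integer values $b$ and $b+1$. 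With $k$ weights equal to $b+1$ and $n-k$ equal to $b$, the constraint $\sum w_i = an+t$ becomes $bn + k = an+t$, which together with $0\leq t\leq n$ and $0\leq k\leq n$ forces $b=a$, $k=t$ (or the equivalent relabelling $b=a+1$, $k=0$ when $t=n$). In every case the terminal product equals $a^{n-t}(a+1)^t$, and since smoothing only increased the product, (i) follows.

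For part (ii), the reduction is immediate. Setting $w_1=a-1$, the remaining weights satisfy
\[\sum_{i=2}^{n} w_i = an+t-(a-1) = a(n-1) + (t+1).\]
Apply part (i) to the $n-1$ weights $w_2,\ldots,w_n$ with parameters $n'=n-1$ and $t'=t+1$; the assumption $t\leq n-2$ ensures $t'\leq n'$, so part (i) yields
\[\prod_{i=2}^{n} w_i \leq a^{(n-1)-(t+1)}(a+1)^{t+1} = a^{n-t-2}(a+1)^{t+1}.\]
Multiplying by $w_1=a-1$ gives the desired bound.

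There is no real obstacle here; the only point requiring mild care is the termination of the smoothing process in part (i), which is handled either by the boundedness of the product or by a lexicographic monovariant on the sorted tuple of weights. Everything else is bookkeeping on the sum constraint.
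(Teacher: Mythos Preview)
Your proposal is correct and matches the paper's approach: the paper simply cites part (i) as a standard integral AM--GM inequality (referring to Lemma~15 in~\cite{mt2}) and observes that part (ii) is an immediate corollary of part (i), which is exactly your reduction. Your smoothing argument for (i) is the standard proof and fills in what the paper leaves to a citation.
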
	
\begin{proof}
Part (i) is a standard integral version of the AM--GM inequality, see e.g. Lemma~15 in~\cite{mt2}. Part (ii) is an immediate corollary of part (i). 
\end{proof}

\section{Stepping down in the base case}\label{sec-s}
Recall that Theorem~\ref{theorem: step-up} allows us to deduce higher cases of Conjecture~\ref{conjecture: entropy densities} from the corresponding lower cases.
The following result may be useful for proving the `base cases' ($s=(r-1)(d+1)+2$) of the conjecture.
The proof also  gives an example of the type of ideas that we will be using in this paper, and in particular, how we make use  of the integral AM-GM inequality.

\begin{theorem}[Step down in the base case]\label{theorem: step down}
For all $r,d\in \mathbb N$ there exist $\varepsilon>0$ and $n_0\in \mathbb N$ such that for all $a\geq d+1$ and all $n\geq n_0$, if $G\in \mathcal{F}(n+1, s+1, \Sigma_{r,d}(a,s+1))$ where $ s \in \mathbb N$ such that $2 \leq s \leq  (r-1)(d+1)+1$, then either $G\in \mathcal{F}(n+1, s, \Sigma_{r,d}(a,s))$ or $G$ contains a vertex $u_0$ with product-degree
\begin{align}\label{goodbound}
p_G(u_0) \leq \frac{\Pi_{r,d}(a,n+1)}{\Pi_{r,d}(a,n)}e^{-\varepsilon n}.
\end{align}
\end{theorem}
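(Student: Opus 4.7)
The plan is to follow the general AM-GM strategy outlined in Section~4 of the paper. Suppose $G \in \mathcal{F}(n+1, s+1, \Sigma_{r,d}(a, s+1))$ but $G \notin \mathcal{F}(n+1, s, \Sigma_{r,d}(a, s))$. Then there is an $s$-set $S \subseteq V(G)$ with $e(G[S]) \geq \Sigma_{r,d}(a, s) + 1$. For every $v \in V' := V(G) \setminus S$, applying the $(s+1)$-set constraint to $S \cup \{v\}$ yields $e(v, S) \leq B$, where $B := \Sigma_{r,d}(a, s+1) - \Sigma_{r,d}(a, s) - 1$. Summing this bound over $v \in V'$ gives $e(S, V') \leq B(n+1-s)$, so by pigeonhole there exists $u_0 \in S$ with $e(u_0, V') \leq B(n+1-s)/s$.

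The key computation is to determine $B$ in the regime $2 \leq s \leq (r-1)(d+1)+1$ via Proposition~\ref{prop: sum-extremal}. In this regime the sum-extremal configurations for $\Sigma_{r,d}(a, s)$ and $\Sigma_{r,d}(a, s+1)$ admit canonical partitions with $|V_0| \leq 1$ (by Proposition~\ref{prop: sum-extremal subgraphs, threshold for 2 vertices in extremal part}(ii)), so $\Sigma_{r,d}(a,t)$ has the form $(a+1)\binom{t}{2} - T(t-1, r-1)$ where $T(m,k)$ denotes the balanced Turán partition discount. A short calculation then gives $B = (a+1)s - 1 - \lfloor (s-1)/(r-1)\rfloor$ (with minor parity corrections). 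In particular $B/s - a < 1$ by a margin which I would then relate to $(r-2+x_{r\star}(a,d))/(r-1)$.

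With $u_0$ fixed, I would apply Proposition~\ref{prop: integral AM-GM}(i) to the nonnegative integer weights $w(u_0 v)$ for $v \in V'$ (whose sum is at most $a(n+1-s) + t_0$ with $t_0 \leq (B/s - a)(n+1-s)$) to obtain
\[p_{V'}(u_0) \leq a^{n+1-s-t_0}(a+1)^{t_0}.\]
The product-degree of $u_0$ factorises as $p_G(u_0) = p_{V'}(u_0) \cdot p_{S \setminus u_0}(u_0)$; the $s-1$ remaining edges are bounded by $\Sigma_{r,d}(a, s+1)$ using the $(s+1)$-set constraint applied to $S$. Comparing the resulting upper bound on $\log p_G(u_0)$ with the asymptotic form $\log (\Pi_{r,d}(a,n+1)/\Pi_{r,d}(a,n)) = n[(r-2+x_{r\star})\log(a+1) + (1-x_{r\star})\log a]/(r-1) + O(\log a)$ (obtained from equations~\eqref{eq: x_star property} and~\eqref{eq: pi(r,d)(a,n)}), the coefficient gap on $\log(1+1/a)$ between $(r-2+x_{r\star})/(r-1)$ and $(B-as)/s$ should yield a linear-in-$n$ saving of the form $\varepsilon n$.

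The hardest step will be controlling the uniformity of $\varepsilon$ in $a \geq d+1$: both $\log p_\star$ and my upper bound on $\log p_G(u_0)$ have leading order $n \log a$, and the saving comes from the subleading $\log(1+1/a)$ term, which vanishes as $a \to \infty$. To extract a genuinely positive $\varepsilon = \varepsilon(r,d)$ I would use Proposition~\ref{prop: a-monotonicity of xstar} to show that the gap $(r-2+x_{r\star}(a,d))/(r-1) - (B-as)/s$ is bounded below by a constant depending only on $r,d$; combined with the scaling of $\log(1+1/a)$ inside $\log p_\star$ itself, the total saving should come out as $\varepsilon n$ with $\varepsilon>0$ uniform. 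Verifying this balance carefully, and adjusting the choice of $u_0$ (for instance by picking $u_0$ to minimise an appropriate weighted quantity rather than using pigeonhole) if the bound is not tight enough, is the main technical obstacle.
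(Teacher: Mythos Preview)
Your outline is close to the paper's, with one ordering difference: the paper applies the integer AM--GM inequality to the $s$ edges from \emph{each} external vertex $v$ into the heavy set $U$ (so $\prod_{u\in U}w(uv)\le a(a+1)^{s-1}$ in Case~1), multiplies over all $v\in V\setminus U$, and then extracts a low product-degree vertex via the \emph{geometric} mean over $U$. You instead pigeonhole first to find $u_0$ with small $e(u_0,V')$ and then apply AM--GM to the $n+1-s$ edges of that single vertex. Both routes yield the same bound $p_G(u_0)\le a^n\bigl(\tfrac{a+1}{a}\bigr)^{(s-1)n/s+O(1)}$ in Case~1 (and the analogue in Case~2), so your variant is fine; the paper's ordering is just slightly cleaner because the $P(G[U])^2$ contribution is automatically absorbed when one averages the full $\prod_{u\in U}p_G(u)$.

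Your worry about the uniformity of $\varepsilon$ in $a$ is well-founded, and your suggested fix (``combined with the scaling of $\log(1+1/a)$ inside $\log p_\star$ itself'') does not rescue it. In both your argument and the paper's, the saving one obtains is
\[
\log p_\star-\log p_G(u_0)\ \ge\ \Bigl(\tfrac{r-2+x_{r\star}}{r-1}-\tfrac{s-1}{s}\Bigr)n\log\!\Bigl(\tfrac{a+1}{a}\Bigr)+O_{r,d}(1),
\]
and while the bracketed coefficient is bounded below by a positive constant depending only on $r,d$ (via Proposition~\ref{prop: a-monotonicity of xstar}), the factor $\log((a+1)/a)\sim 1/a$ kills any uniform $\varepsilon>0$. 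The paper's proof is equally silent here; what the argument actually delivers is the bound $p_G(u_0)\le \bigl(\tfrac{a}{a+1}\bigr)^{\varepsilon n}\cdot \Pi_{r,d}(a,n+1)/\Pi_{r,d}(a,n)$ with $\varepsilon=\varepsilon(r,d)>0$, which is also the form used in the applications (compare the proofs of Theorems~\ref{theorem: step-up} and~\ref{theorem: turan}). So either read the theorem with $e^{-\varepsilon n}$ replaced by $(a/(a+1))^{\varepsilon n}$, or allow $\varepsilon$ to depend on $a$.
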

\begin{proof}
Let $\varepsilon >0$ be sufficiently small and $n$ sufficiently large.
Let $G$ be a graph in $\mathcal{F}(n+1,s+1,\Sigma_{r,d}(a,s+1))$, and write $V:=V(G)$. 
 Suppose $G$ contains an $s$-set  $U$ such that $e(G[U])\geq \Sigma_{r,d}(a, s)+1$. Our assumption on $G$ tells us that (i) $e(G[U])\leq \Sigma_{r,d}(a,s+1)$, and (ii) every $v\in V\setminus U$ sends at most $\Sigma_{r,d}(a,s+1)-\Sigma_{r,d}(a,s)-1$ edges into $U$. We shall deduce from this that $U$ contains a low product-degree vertex. To do this we consider two cases.

\noindent\textbf{Case 1: $2\leq s\leq r-1$}. For such values of $s$, by Proposition~\ref{prop: sum-extremal} we have $\Sigma_{r,d}(a,s+1)-\Sigma_{r,d}(a,s)-1=as+(s-1)$.  The integral AM--GM inequality (Proposition~\ref{prop: integral AM-GM}(i)) tells us that $\prod_{u \in U}w(uv) \leqslant a(a+1)^{s-1} $ for each vertex $v \in V \setminus U$.  Thus, we have that
\begin{align*}
\prod_{u\in U}p_{G}(u) &= P(G[U])^2 \prod_{u\in U, v \in V \setminus U}w(uv) \\
&\leq P(G[U])^2 a^{n-s+1}(a+1)^{(s-1)(n-s+1)}.
\end{align*} 
Note that (i) above implies  $P(G[U])^2$ is bounded above by a constant term (i.e., independent of $n$). Thus,
averaging over $u\in U$, the inequality above implies the existence of a vertex $u_0\in U$ with 
\begin{align}\label{quseful}
p_G(u_0)\leq    a^{n}\left (\frac{a+1}{a} \right )^{\frac{(s-1)n}{s}+O(1)}.
\end{align}

Recall that from the definition of $T^P_{r,d}(a,n)$ and $x_{r \star}$, all vertices in graphs from $T^P_{r,d}(a,n)$ have product-degree  $\left(\frac{a+1}{a}\right)^{\frac{r-2+x_{r \star}}{r-1}n+O(1)}a^{n}$.
Thus, 
\begin{align}\label{veryuseful}
\frac{\Pi_{r,d}(a,n+1)}{\Pi_{r,d}(a,n)} =
\left(\frac{a+1}{a}\right)^{\frac{r-2+x_{r \star}}{r-1}n+O(1)}a^{n}.
\end{align}

Since $\frac{s-1}{s}\leq \frac{r-2}{r-1}$, comparing (\ref{quseful}) with (\ref{veryuseful}) (and since $\varepsilon>0$ is sufficiently small and $n$ is sufficiently large), we see that $u_0$ satisfies (\ref{goodbound}), as desired.

\noindent\textbf{Case 2: $r\leq s\leq (r-1)(d+1)+1$}. Write $s =(r-1)q+\ell$, where $q, \ell$ are respectively the quotient and remainder of $s$  upon division by $r-1$. Then, using Proposition~\ref{prop: sum-extremal subgraphs, threshold for 2 vertices in extremal part}(ii) (which tells us that in optimal canonical partitions for $T^e_{r,d}(a,s)$ and $T^e_{r,d}(a,s+1)$ we may take $\vert V_0\vert =1$),
 we see that 
\begin{align*}
\Sigma_{r,d}(a, s+1)-\Sigma_{r,d}(a,s)-1&=\left\{\begin{array}{ll}
as+s-q & \textrm{if } \ell=0,\\
as+s-q-1& \textrm{if } \ell>0,\end{array} \right.\\
&= as+ s-\left \lceil \frac{s}{r-1} \right\rceil.\end{align*}
Applying the integral AM--GM-inequality (Proposition~\ref{prop: integral AM-GM}(i)), we have 
\begin{align*}
\prod_{u\in U}p_{G}(u)\leq P(G[U])^2 a^{\lceil\frac{s}{r-1}\rceil n}(a+1)^{(s-\lceil\frac{s}{r-1}\rceil)n}\leq a^{sn}\left(\frac{a+1}{a}\right)^{sn\left(\frac{r-2}{r-1}\right)+O(1)},
\end{align*} 
where as in Case 1 we use that $P(G[U])^2$ is bounded above by a constant term (due to (i)).
Averaging over $u\in U$, we see this implies the existence of a vertex $u_0\in U$ with
\[p_G(u_0)\leq   a^{n}\left(\frac{a+1}{a}\right)^{n\left(\frac{r-2}{r-1}\right)+O(1)}.\]
Then similarly to Case~1, comparing this with (\ref{veryuseful}), we can conclude $u_0$ satisfies (\ref{goodbound}).

\end{proof}

\section{Proof of Theorem~\ref{theorem: step-up}}
We now use  the tools from Section~\ref{sec4} to prove
Theorem~\ref{theorem: step-up}.

\begin{proof}[Proof of Theorem~\ref{theorem: step-up}]
By Proposition~\ref{prop:diff in sigma when adding one vertex}, we may fix some $a_1\geq a_0$ such that, for all $a \geq a_1$, we have 
\[\varepsilon := \left(a +\frac{r-2+x_{r \star}(a,d)}{r-1}\right)s-\Bigl(\Sigma_{r,d}(a,s+1)-\Sigma_{r,d}(a,s)-1\Bigr)>0.\]

Let $N\geq n_0$ be a sufficiently large constant to be determined later, and consider a multigraph $G\in \mathcal{F}\left(n, s+1, \Sigma_{r,d}(a,s+1)\right)$ for some $n\geq n_1:=N\left(1+ \frac{s}{\varepsilon}\right)$.  We sequentially remove vertices with minimum product-degree from $G$ to obtain a sequence $G=G_n, G_{n-1},G_{n-2}, \ldots,$ of multigraphs, where each $G_i$ is an induced subgraph of $G$
on $i$ vertices, stopping when we have obtained a multigraph $G_{n'}$ such that either $n'=N$ or $G_{n'} \in \mathcal{F}(n', s, \Sigma_{r,d}(a,s))$.

Suppose that there is some $s$-set $U$ in $V(G_i)$ spanning at least $\Sigma_{r,d}(a,s)+1+x$ edges for some $x\geq 0$. Then every vertex $v\in V(G_i)\setminus U$ sends at most $\Sigma_{r,d}(a,s+1)-\Sigma_{r,d}(a,s)-1-x$ edges into $U$.  As $\Sigma_{r,d}(a,s)+1+x \leq {s \choose 2}(a+1)+x$,
 the standard AM-GM inequality tells us that 
\begin{equation}
P(G_i[U]) \leqslant \left( a+1+\frac{2x}{s(s-1)} \right)^{{s \choose 2}}. \nonumber
\end{equation}
Note that 
$as +\frac{r-2 + x_{r \star}}{r-1}s - (\varepsilon + x)=\Sigma_{r,d}(a,s+1)-\Sigma_{r,d}(a,s)-1-x$.
Thus, Proposition~\ref{prop: integral AM-GM}(i) (with $s$ and $\frac{r-2 + x_{r \star}}{r-1}s - (\varepsilon + x)$ playing the roles of $n$ and $t$ respectively) implies that
 each vertex $v \in V(G_i) \setminus U$ has edge product-degree into $U$ of at most
 \begin{equation}
\prod_{u\in U} w_{G_i}(uv)\leq a^{s}\left( \frac{a+1}{a}\right)^{ \frac{r-2 + x_{r \star}}{r-1}s - (\varepsilon + x)}. \nonumber
\end{equation}
Thus, using a similar argument to that found in the proof of Theorem~\ref{theorem: step down}, we have that there is some vertex in $U$ with product-degree in $G_i$ of at most 
\begin{align}\label{prelimsec, eq: bound on product degree of a vertex from bad s-set}
\left( a+1+\frac{2x}{s(s-1)}\right)^{s-1} a^{i-s}  \left(\frac{a+1}{a}\right)^{(i-s)(\frac{r-2+x_{r \star}}{r-1} -\frac{(\varepsilon+x)}{s})}.\end{align}
Now, we may assume $x\leq (a+1)s$, since otherwise adding any vertex of $G_i$ to $U$ would give rise to an $(s+1)$-set spanning strictly more than $\Sigma_{r,d}(a, s+1)$ edges.  Recall that from the definition of $T^P_{r,d}(a,i)$ and $x_{r \star}$, all vertices in graphs from $T^P_{r,d}(a,i)$ have product-degree  $\left(\frac{a+1}{a}\right)^{\frac{r-2+x_{r \star}}{r-1}i+O(1)}a^{i}$. Combining this with \eqref{prelimsec, eq: bound on product degree of a vertex from bad s-set}, we have that for all $i\geq N$  and $N$ chosen sufficiently large, $G_i$ contains a vertex with  product-degree at most
\[\frac{\Pi_{r,d}(a,i)}{\Pi_{r,d}(a,i-1)}\left(\frac{a+1}{a}\right)^{-\frac{\varepsilon}{2s}i} \leq \frac{\Pi_{r,d}(a,i)}{\Pi_{r,d}(a,i-1)}\left(\frac{a+1}{a}\right)^{-\frac{\varepsilon}{2s}N}  .\]

It follows that for every $i$ with $n'\leq i < n$, $G_i$ satisfies
\begin{align}\label{prelimsec eq: bound on P(G)}
P(G)=P(G_n)\leq P(G_i)\prod_{k=i}^{n-1} \frac{\Pi_{r,d}(a,k+1)}{\Pi_{r,d}(a,k)}\left(\frac{a+1}{a}\right)^{-\frac{\varepsilon}{2s}N}\leq P(G_i) \frac{\Pi_{r,d}(a,n)}{\Pi_{r,d}(a,i)}\left(\frac{a+1}{a}\right)^{-\frac{\varepsilon N}{2s}(n-i)}.
\end{align}
Consider now the case $i=n'$. If $n'=N$, then note that by averaging over all $(s+1)$-sets, we have that the average edge-multiplicity in a multigraph from $\mathcal{F}(N, s+1, \Sigma_{r,d}(a,s+1) )$ is at most $a+1$. It thus follows from the AM-GM inequality that $P(G_{N})\leq \mathrm{ex}_{\Pi}(N, s+1, \Sigma_{r,d}(a, s+1))\leq (a+1)^{\binom{N}{2}}< \left(\frac{a+1}{a}\right)^{\binom{N}{2}}\Pi_{r,d}(a, N)$. Combining this with \eqref{prelimsec eq: bound on P(G)} and our assumption $n-N\geq n_1-N= N\cdot \frac{s}{\varepsilon}$, we obtain
\begin{eqnarray*}
P(G) &\leq&  P(G_N)\frac{\Pi_{r,d}(a,n)}{\Pi_{r,d}(a,N)}\left(\frac{a+1}{a}\right)^{-\frac{\varepsilon N}{2s}(n-N)} \\
&\leq& \left(\frac{a+1}{a}\right)^{\binom{N}{2}} \Pi_{r,d}(a, n)\left(\frac{a+1}{a}\right)^{-\frac{N^{2}}{2}} \\
&<& \Pi_{r,d}(a, n), \nonumber
\end{eqnarray*}
as desired.

On the other hand if $n'>N$  then $G_{n'}\in \mathcal{F}\left(n', s, \Sigma_{r,d}(a,s)\right)$. So by our assumption (and the fact that $N\geq n_0$) we have $P(G_{n'})\leq \Pi_{r,d}(a,n')$. Combining this with \eqref{prelimsec eq: bound on P(G)}, we get 
\[P(G)\leq P(G_{n'}) \frac{\Pi_{r,d}(a,n)}{\Pi_{r,d}(a,n')}\leq \Pi_{r,d}(a, n),\]
as desired.
\end{proof}

\section{Proof of the Multigraph Tur\'{a}n theorem}

In this section, we prove Theorem~\ref{theorem: turan}. The crux of the argument is the following lemma.
\begin{lemma}\label{lemma: turan degree-removal}
Let $r,s,a,n \in \mathbb N$ be such that $n\geq r(s+2)$ and $r,s\geq 2$. Let  $G$ be a multigraph from  $\mathcal{F}\left(n+1, s+1,\Sigma_{r,0}(a,s+1) \right)$. Then either $G\in \mathcal{F}\left(n+1, s, \Sigma_{r,0}(a,s)\right)$ or $G$ contains a vertex of product-degree strictly less than 
\[\left(\frac{a}{a+1}\right)^{\frac{n-r(s+2)}{rs}}\Pi_{r,0}(a,n+1)/\Pi_{r,0}(a,n).\]
\end{lemma}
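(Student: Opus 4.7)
The plan is to mimic the argument of Theorem~\ref{theorem: step down} from Section~\ref{sec-s}: if $G$ already lies in $\mathcal{F}(n+1,s,\Sigma_{r,0}(a,s))$ we are done, so I would assume otherwise and produce an $s$-set $U\subset V(G)$ with $e(G[U])\geq \Sigma_{r,0}(a,s)+1$. From this, two successive applications of the integral AM--GM inequality (Proposition~\ref{prop: integral AM-GM}) should deliver a vertex of $U$ with the required small product-degree.

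The first step would use the $(s+1)$-set constraint in conjunction with Proposition~\ref{prop: sum-extremal} (which for $d=0$ gives $\Sigma_{r,0}(a,s+1)-\Sigma_{r,0}(a,s) = (a+1)s-\lfloor s/r\rfloor$) to force every $v\in V(G)\setminus U$ to satisfy
\[
\sum_{u\in U}w(uv) \;\leq\; \Sigma_{r,0}(a,s+1)-e(G[U]) \;\leq\; as+(s-\lfloor s/r\rfloor-1).
\]
Proposition~\ref{prop: integral AM-GM}(i) (with $n:=s$, $t:=s-\lfloor s/r\rfloor-1$) then yields $\prod_{u\in U}w(uv)\leq a^{\lfloor s/r\rfloor+1}(a+1)^{s-\lfloor s/r\rfloor-1}$. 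Multiplying this inequality over all $v\notin U$, combining with a factor of $P(G[U])^2$ coming from the edges inside $U$, and taking the $s$-th root (pigeonhole on the geometric mean), one extracts some $u_0\in U$ with
\[
p_G(u_0) \;\leq\; P(G[U])^{2/s}\cdot a^{(\lfloor s/r\rfloor+1)(n+1-s)/s}(a+1)^{(s-\lfloor s/r\rfloor-1)(n+1-s)/s}.
\]
A second application of AM--GM, this time to the $\binom{s}{2}$ edges inside $G[U]$ using $e(G[U])\leq \Sigma_{r,0}(a,s+1)$, bounds $P(G[U])^{2/s}$ by a quantity depending only on $a$ and $s$ (and not on $n$).

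The final step is a direct comparison of this bound with the target $(a/(a+1))^{(n-r(s+2))/(rs)}\,\Pi_{r,0}(a,n+1)/\Pi_{r,0}(a,n)$. Since $x_{r\star}(a,0)=1/r$, equation~\eqref{veryuseful} tells us that $\Pi_{r,0}(a,n+1)/\Pi_{r,0}(a,n)$ grows like $a^{n/r}(a+1)^{(r-1)n/r}$. Setting $k:=\lfloor s/r\rfloor+1$ and $\rho:=s\bmod r\in[0,r-1]$, the identity $kr-s=r-\rho\geq 1$ gives $k/s-1/r\geq 1/(rs)$, so the AM--GM-derived bound on $p_G(u_0)$ is intrinsically skewed toward $a$ by a factor of at least $(a/(a+1))^{(r-\rho)(n+1-s)/(rs)}$ relative to the extremal growth rate. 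A short computation should then confirm that for $n\geq r(s+2)$ this skew suffices to yield the strict inequality stated in the lemma.

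The most delicate point I anticipate is precisely that final bookkeeping: the multiplicative constant $P(G[U])^{2/s}$, together with the $(a+1)^{1-s}$ factor arising from the exponent deficit between our bound and $\Pi_{r,0}(a,n+1)/\Pi_{r,0}(a,n)$ (and any $O(1)$ boundary corrections coming from the canonical partition not being exactly balanced), must be fully swallowed by the exponential gain $(a/(a+1))^{\Theta(n)/(rs)}$ coming from the $k/s-1/r\geq 1/(rs)$ bias. I expect that one genuinely needs the sharp AM--GM bound $P(G[U])\leq (e(G[U])/\binom{s}{2})^{\binom{s}{2}}$, rather than the trivial $(a+1)^{\binom{s}{2}}$, in order to cover the boundary case $\rho=r-1$ where the exponential skew is merely $(a/(a+1))^{(n+1-s)/(rs)}$; here the hypothesis $n\geq r(s+2)$ is expected to be exactly the right amount of slack.
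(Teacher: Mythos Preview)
Your product-based plan is different from the paper's route, and it has a real gap in the boundary case you yourself flag. The paper does \emph{not} apply AM--GM vertex-by-vertex. Instead it bounds the \emph{sum} $\sum_{u\in U}d(u)$ directly: writing $e(G[U])=\Sigma_{r,0}(a,s)+1+x$, it compares $\sum_{u\in U}d(u)$ with $s\bigl(\Sigma_{r,0}(a,n+1)-\Sigma_{r,0}(a,n)\bigr)$, shows the gap $f(n,r,s,x)$ is increasing in $x$ and satisfies $f(n,r,s,0)>n/r-(s+2)$, averages to find one $u\in U$ with degree deficiency exceeding $(n-r(s+2))/(rs)$, and \emph{then} applies integral AM--GM once to that single vertex. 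This sidesteps the whole issue of controlling $P(G[U])$.

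The flaw in your plan is this: when $\rho=r-1$ your skew $(r-\rho)(n+1-s)/(rs)=(n+1-s)/(rs)$ is of the \emph{same} order as the target's own skew $(n-r(s+2))/(rs)$, so the net advantage relative to the target is only a \emph{constant}, not $\Theta(n)$. Hence $P(G[U])^{2/s}$ is not absorbed for free; it must be bounded sharply as a specific power $a^{?}(a+1)^{?}$. Using $e(G[U])\le\Sigma_{r,0}(a,s+1)$ is far too loose here. Concretely, take $r=2$, $s=5$ (so $\rho=1$): your main term is $a^{3(n-4)/5}(a+1)^{2(n-4)/5}$ while the target is essentially $a^{(3n-7)/5}(a+1)^{(2n+7)/5}$, giving a ratio $a^{-1}(a+1)^{-3}$; but your bound on $P(G[U])^{2/5}$ using $e(G[U])\le\Sigma_{2,0}(a,6)=15a+9$ is $(1.5a+0.9)^{4}$, and $(1.5a+0.9)^{4}/\bigl(a(a+1)^{3}\bigr)\to(3/2)^{4}>1$ as $a\to\infty$. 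So the inequality fails.

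Your approach is salvageable if you track the excess $x$ consistently in \emph{both} estimates (the product into $U$ and $P(G[U])$ via integral AM--GM), then check that the combined bound is worst at $x=0$; with $e(G[U])=\Sigma_{r,0}(a,s)+1$ one gets exactly the margin $\Delta\ge 1/s$ you need. But this is more bookkeeping than the paper's degree-sum argument, which gets the sharp threshold $n\ge r(s+2)$ cleanly by design.
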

\begin{proof}
Suppose $G$ contains an $s$-set $U$ such that $e(G[U])= \Sigma_{r,0}(a,s)+1+x$, for some  $x\geq 0$. By our assumption that no $(s+1)$-set in $G$ spans more than $\Sigma_{r,0}(a, s+1)$ edges (counting multiplicities), we have that every vertex in $V(G)\setminus U$ sends at most 
\begin{align}
\Sigma_{r,0}(a,s+1)-\Sigma_{r,0}(a,s)-1-x= (a+1)s -\Bigl\lfloor \frac{s}{r}\Bigr \rfloor -1 -x\notag
\end{align} 
edges into $U$ (counting multiplicities). 
Summing over the degrees of the vertices in $U$, we thus have
\begin{align}\label{eq: turan case bound on sum of degrees inside X}
\sum_{u\in U} d(u)& \leq 2\left(\Sigma_{r,0}(a,s) +1+x\right) +(n+1-s)\left((a+1)s -\Bigl\lfloor \frac{s}{r}\Bigr \rfloor -1 -x\right)\\ \notag
&\leq s\left((a+1)(s-1)- \left( \frac{s}{r} -1\right)\right) - (1+x)(n-s-1)+(n+1-s)\left((a+1)s-\Bigl\lfloor \frac{s}{r}\Bigr\rfloor\right)\\ \notag
&= (a+1)sn - n \Bigl\lfloor \frac{s}{r}\Bigr\rfloor -(1+x)\left(n-s-1\right)+(s-1)\Bigl\lfloor \frac{s}{r}\Bigr\rfloor -\frac{s^2}{r}+s.
\end{align}	
On the other hand, we have
\begin{align}\label{eq: turan case, bound on edge increase}
s\Bigl(\Sigma_{r,0}(a,n+1) -\Sigma_{r,0}(a,n)\Bigr) &= s\left((a+1)n - \Bigl\lfloor \frac{n}{r}\Big\rfloor \right)= (a+1)sn - s \Bigl\lfloor \frac{n}{r}\Big\rfloor .
\end{align}
Combining ~\eqref{eq: turan case bound on sum of degrees inside X} and ~\eqref{eq: turan case, bound on edge increase}, we have that 
\begin{align}\label{eq: turan case, lower bound on diff degree sum}
& s\left(\Sigma_{r,0}(a,n+1) -\Sigma_{r,0}(a,n)\right) - \sum_{u\in U} d(u)\\\notag
&\qquad \geq n \Bigl\lfloor \frac{s}{r}\Big\rfloor- s\Bigl\lfloor \frac{n}{r}\Big\rfloor + (1+x)(n-s-1)-\left(s +(s-1)\Bigl\lfloor \frac{s}{r}\Bigr\rfloor -\frac{s^2}{r} \right)=: f(n,r,s, x).
\end{align}
We claim that $f(n,r,s, x)>0$. Indeed, observe first of all that since $n\geq r(s+2)$, $f(n,r,s,x)$ is an increasing function of $x$, and thus  $f(n,r,s,x)\geq f(n,r,s,0)$. Let $t$ be the remainder when $s$ is divided by $r$.  Then 
\begin{align*}
f(n,r,s,x)\geq f(n,r,s,0)&\geq  n \left(\frac{s-t}{r}\right)-\frac{sn}{r}+ n-2s-1+ \frac{s+t(s-1)}{r}\\
&=n-2s-1+\frac{s}{r}-\frac{t\left(n-s+1\right)}{r}\\
&\geq n-2s -1+ \frac{s-(r-1)\left(n-s+1\right)}{{r}}>\frac{n}{r}-(s+2)\geq 0,
\end{align*}
using our assumption that $n\geq r(s+2)$. In particular, by averaging, \eqref{eq: turan case, lower bound on diff degree sum} implies some vertex $u\in U$ must have degree $d(u)$ satisfying
\begin{align}\label{eq: bound on deficiency turan}
d(u)+ \frac{n-r(s+2)}{rs} < d(u)+f(n,r,s,x)/s\stackrel{(\ref{eq: turan case, lower bound on diff degree sum})}{\leq}\Sigma_{r,0}(a,n+1)-\Sigma_{r,0}(a,n).
\end{align}
 Note the right-hand side of \eqref{eq: bound on deficiency turan} is the minimum degree of $T^e_{r,0}(a,n+1)=T^P_{r,0}(a,n+1)$. Since all edges in $T^P_{r,0}(a, n+1)$ have multiplicity $a$ or $(a+1)$, the integral AM-GM inequality (Proposition~\ref{prop: integral AM-GM}(i)) then implies  that 
 \[p_{G}(u)< \left(\frac{a}{a+1}\right)^{\frac{n-r(s+2)}{rs}}\Pi_{r,0}(a,n+1)/\Pi_{r,0}(a,n),\]
 as claimed. 
\end{proof}
\begin{proof}[Proof of Theorem~\ref{theorem: turan}]
As noted in~\cite{mt2}, the case $r=1$ is immediate from averaging over $s$-sets and applying the AM-GM inequality: for all integers $n\geq s$ and $a\geq 2$, $\textrm{ex}_{\Pi}(n, s, a\binom{s}{2})= a^{\binom{n}{2}}$, with equality uniquely attained by the multigraph on $[n]$ in which every edge has multiplicity $a$.

Fix therefore $r\in \mathbb{Z}_{\geq 2}$ and $s\geq r+1$. We shall prove that $\mathrm{ex}_{\Pi}\left(n, s, \Sigma_{r,0}(a,s)\right)=\Pi_{r,0}(a,n)$ for all $n\geq  2r(s+2)+r(s+2)\sqrt{s-1}$. The lower bound is immediate from Construction~\ref{construction: lower bound}, so we need only concern ourselves with the upper bound.

By averaging over all $s$-sets, one has that if $G\in \mathcal{F}(n, s, \Sigma_{r,0}(a,s) )$ then
\[e(G)\leq \binom{n}{s}\Sigma_{r,0}(a,s)/\binom{n-2}{s-2}<(a+1)\binom{n}{2}.\]
In particular by the AM-GM inequality and a standard upper bound on the size of Tur\'an graphs, we have:
\begin{align}\label{eq: turan bound on the base ish case}
\mathrm{ex}_{\Pi}(r(s+2), s, \Sigma_{r,0}(a,s))< (a+1)^{\binom{r(s+2)}{2}}\leq \left(\frac{a+1}{a}\right)^{\frac{1}{r}\binom{r(s+2)}{2}} \Pi_{r,0}(a,r(s+2)).
\end{align}

Now suppose $G\in \mathcal{F}(n, s, \Sigma_{r,0}(a,s))$, for some integer $n\geq 2r(s+2)+r(s+2)\sqrt{s-1}$. We sequentially remove vertices of minimum product-degree until one of two things occurs:
\begin{enumerate}[(a)]	
\item we have only $n_0=r(s+2)$ vertices left;
\item we have obtained a multigraph $G'$ on $n_0$ vertices so that $r(s+2)<n_0\leq n$ and
$G'\in \cap_{2\leq t\leq s} \mathcal{F}(n_0,t, \Sigma_{r,0}(a,t))$.
\end{enumerate}
Denote by $G=G_n, G_{n-1}, G_{n-2}, \ldots , G_{n_0}$ the sequence of induced subgraphs of our multigraph $G$ obtained by this procedure, where for each $i< n$, $G_i$ is the multigraph on $i$ vertices obtained from $G_{i+1}$ by removing an arbitrarily chosen vertex of minimum product-degree in $G_{i+1}$. For each $i$, note that $G_i\in \cap_{s_i\leq t\leq s}\mathcal{F}(i, t, \Sigma_{r,0}(a,t))$ for some integer $s_i$ so that $ 2\leq s_i \leq s$. By Lemma~\ref{lemma: turan degree-removal} (which we can apply since $n_0\geq r(s+2)$), we have that for $i\geq n_0$, 
\begin{align*}
P(G_{i+1} )\leq \Bigl(\frac{a}{a+1}\Bigr)^{\frac{i-r(s_i+1)}{r(s_i-1)} }\frac{\Pi_{r,0}(a,i+1)}{\Pi_{r,0}(a,i)} P(G_{i})
	\leq \Bigl(\frac{a}{a+1}\Bigr)^{\frac{i-r(s+1)}{r(s-1)}}\frac{\Pi_{r,0}(a,i+1)}{\Pi_{r,0}(a,i)}P(G_{i}),
\end{align*}
whence we have
\begin{align} \label{eq: upper bound on P(Gn) Turan case}
 P(G_n)\leq \Bigl(\frac{a}{a+1}\Bigr)^{\sum_{i=n_0}^{n-1}\frac{i-r(s+1)}{r(s-1)}} \frac{\Pi_{r,0}(a,n)}{\Pi_{r,0}(a,n_0)}P(G_{n_0}).
 \end{align}
If (a) occurs, then $n_0=r(s+2)$ and, by our lower bound on $n$ we have
\begin{align*}
\sum_{i=n_0}^{n-1}\frac{i-r(s+1)}{r(s-1)}> \frac{1}{2r(s-1)}\Bigl(n-2r(s+2)\Bigr)^2>\frac{1}{r}\binom{r(s+2)}{2}.
\end{align*}
Combining this inequality with \eqref{eq: upper bound on P(Gn) Turan case} and \eqref{eq: turan bound on the base ish case}, we obtain
\begin{align*}
P(G_n)< \Bigl(\frac{a}{a+1}\Bigr)^{\frac{1}{r}\binom{r(s+2)}{2}} \frac{\Pi_{r,0}(a,n)}{\Pi_{r,0}(a,n_0)}\mathrm{ex}_{\Pi}(n_0, s, \Sigma_{r,0}(a,s))\leq \Pi_{r,0}(a,n),
\end{align*}
as desired, and with a strict inequality. 

On the other hand, if (b) occurs and we have that $G_{n_0}\in \cap_{2\leq t\leq s} \mathcal{F}(n_0,t, \Sigma_{r,0}(a,t))$, then $G_{n_0}$ is a multigraph on at least $r(s+2)$ vertices in which all edges have multiplicity at most $a+1$ and for which the subgraph of edges  with multiplicity exactly $a+1$ is $K_{r+1}$-free. It follows from Tur\'an's theorem that $G_{n_0}$ has at most as many edges of multiplicity $a+1$ as a multigraph from $\mathcal{T}_{r,0}(a,n_0)$, whence $P(G_{n_0})\leq \Pi_{r,0}(a, n_0)$. Combining this with \eqref{eq: upper bound on P(Gn) Turan case}, we get that
\begin{align*}
P(G_n)\leq \Bigl(\frac{a}{a+1}\Bigr)^{\sum_{i=n_0}^{n-1}\frac{i-r(s+1)}{r(s-1)}} \frac{\Pi_{r,0}(a,n)}{\Pi_{r,0}(a,n_0)}P(G_{n_0})\leq \Pi_{r,0}(a,n),
\end{align*}
with equality achieved if and only if $n_0=n$ and $G=G_n \in \mathcal{T}_{r,0}(a,n)$. This concludes the proof of the theorem.
\end{proof}

\section{Proof of the Mubayi--Terry conjecture}
\subsection{Overview of the proof}\label{sec71}
In this section we prove Theorem~\ref{theorem: mubayi Terry conjecture}, thereby resolving the conjecture of Mubayi and Terry. We follow an essentially inductive proof strategy. First of all we establish the statement of Theorem~\ref{theorem: mubayi Terry conjecture} holds in the case $n=6$. Next, we show that if $G\in \mathcal{F}(n+1, 4, \binom{4}{2}a +3)$, then either $G$ contains a vertex with product-degree at most $\Pi_{2,1}(a,n+1)/\Pi_{2,1}(a,n)$ or $G \in  \mathcal{F}(n+1, 3, \binom{3}{2}a +2)\cap \mathcal{F}(n+1, 2, \binom{2}{2}a +1)$. In the former case, we remove a vertex with low product-degree and repeat our argument. In the latter case, we show we may modify $G$ to obtain a multigraph $G'$ satisfying certain properties.

\begin{proposition}\label{prop: modifying G to get the clique structure}
Let $G\in \bigcap_{2\leq t \leq 4} \mathcal{F}(n+1, t, \binom{t}{2}a+t-1)$. Then there exists a multigraph $G'$ satisfying the following three properties:
\begin{enumerate}[(a)]
	\item $G'\in \bigcap_{2\leq t \leq 4} \mathcal{F}(n+1, t, \binom{t}{2}a+t-1)$,
	\item $P(G')\geq P(G)$,
	\item  $V(G')$ can be partitioned into disjoint sets $X_1,X_2,\ldots ,X_m$ for some $m\leq n$ such that, for each $i$, $G'[X_i]$ is a clique of edges with multiplicity $a-1$, and for each pair $i,j$ with $i\neq j$, all edges between $X_i$ and $X_j$ have the same multiplicity in $G'$, and this multiplicity is either $a$ or $a+1$.
\end{enumerate} 
\end{proposition}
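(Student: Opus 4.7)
My plan is to choose $G'$ as an extremum of a two-parameter optimization and then verify it automatically has the clique structure. Let $\mathcal{H}$ denote the set of multigraphs $H$ on $[n+1]$ satisfying property (a) with $P(H)\geq P(G)$; since $\mathcal{H}$ is finite and nonempty, let $G'\in\mathcal{H}$ maximize $P$ over $\mathcal{H}$ and, subject to that, maximize the number of edges of multiplicity exactly $a-1$. Properties (a) and (b) are then immediate from the choice of $G'$, and the content lies in deriving (c) together with the bound $m\leq n$.

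The first structural step is to show every edge of $G'$ has multiplicity in $\{a-1,a,a+1\}$. The upper bound is the pair constraint. For the lower bound, I would exploit that the $(a+1)$-edges of $G'$ form a triangle-free graph: three $(a+1)$-edges on a single triple would sum to $3a+3>3a+2$. Consequently any $4$-set contains at most four $(a+1)$-edges (five would necessarily contain a triangle in the $(a+1)$-graph). If some edge $uv$ had $w(uv)=b\leq a-2$, then in any $4$-set $\{u,v,z_1,z_2\}$ the five edges other than $uv$ would sum to at most $4(a+1)+a=5a+4$, so raising $w(uv)$ to $b+1$ would preserve the $4$-set constraint ($b+1+5a+4\leq 6a+3$); the triple and pair constraints have even more slack. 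This raising strictly increases $P(G')$, contradicting maximality.

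Next I would show the graph $L$ of $(a-1)$-multiplicity edges of $G'$ is a disjoint union of cliques. Suppose for contradiction that three distinct vertices $u,v,w$ satisfy $w(uv)=w(vw)=a-1$ but $w(uw)\in\{a,a+1\}$. I would perform a Zykov-style cloning operation making $w$ a twin of $v$, by setting $w(wx):=w(vx)$ for all $x\in V(G')\setminus\{v,w\}$; in particular $w(uw)$ becomes $w(uv)=a-1$. Constraint preservation turns on the $4$-set $\{v,w,x,y\}$, whose new sum is $(a-1)+2w(vx)+2w(vy)+w(xy)$: the triangle-freeness of $(a+1)$-edges forbids $w(vx)=w(vy)=w(xy)=a+1$ simultaneously, so this sum is at most $(a-1)+4(a+1)+a=6a+3$, and the remaining $3$- and $4$-set constraints are checked analogously or are strictly easier. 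By appropriately choosing between cloning $v\to w$ and $v\to u$ (to avoid decreasing $P$, and to ensure a net strict gain in $(a-1)$-edges, using the product-degrees and $(a-1)$-neighborhoods of $u$ and $w$), I would obtain $G''\in\mathcal{H}$ with $P(G'')\geq P(G')$ and strictly more $(a-1)$-edges, contradicting the secondary maximality of $G'$. A similar local swap argument would then show that between any two distinct cliques $X_i,X_j$ of $L$, all edges share a common multiplicity in $\{a,a+1\}$: if two such edges had multiplicities $a$ and $a+1$, a $P$-preserving uniformization would contradict secondary maximality, and otherwise a $P$-improving swap would contradict $P$-maximality.

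Finally, for the bound $m\leq n$ I need $L$ to contain at least one edge. If $G'$ had no $(a-1)$-edges, all multiplicities would lie in $\{a,a+1\}$; triangle-freeness together with the $4$-set constraint (which also forbids $C_4$ in the $(a+1)$-edges) imply via the K\H{o}v\'ari--S\'os--Tur\'an bound that $G'$ has only $O(n^{3/2})$ edges of multiplicity $a+1$, so that $\log P(G')\leq \binom{n+1}{2}\log a+O(n^{3/2}\log(a+1))$. One can verify directly that any member of $\mathcal{T}_{2,1}(a,n+1)$ lies in $\bigcap_{2\leq t\leq 4}\mathcal{F}(n+1,t,\binom{t}{2}a+t-1)$, with product $\Pi_{2,1}(a,n+1)=e^{(\pi_{2,1}(a)+o(1))\binom{n+1}{2}}$; since $\pi_{2,1}(a)>\log a$ (the derivative of~\eqref{eq: pi(r,d)(a,n)} at $x=0$ equals $2\log((a+1)/a)>0$), this strictly exceeds $P(G')$ for $n\geq 30$, placing such a member in $\mathcal{H}$ and contradicting the $P$-maximality of the hypothetical no-$(a-1)$-edge $G'$. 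The main obstacle will be the cloning analysis in the third paragraph: the multiplicity range $\{a-1,a,a+1\}$ is extremely tight against the $4$-set bound $6a+3$, so the cloning direction has to be chosen carefully and the triangle-freeness of $(a+1)$-edges exploited in each subcase to both preserve all constraints and strictly increase the $(a-1)$-edge count.
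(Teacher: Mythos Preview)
Your overall strategy---choose $G'$ extremal with respect to $P$ and then a secondary invariant, and derive the clique structure from extremality---is very much in the spirit of the paper's proof, which also uses Zykov-type cloning along low-multiplicity edges. The paper, however, packages this as an \emph{iterative} procedure rather than a single extremal characterization, and that difference is where your argument runs into trouble.

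The concrete gap is in your steps 3 and 4. Consider an induced $P_3$ in the $(a-1)$-graph, say $u\!-\!v\!-\!w$ with $w(uw)\in\{a,a+1\}$. Cloning $v$ onto $w$ (along the edge $vw$) changes $P$ by a factor $p(v)/p(w)$ and changes the number of $(a-1)$-edges by $|N_{a-1}(v)|-|N_{a-1}(w)|$; similarly for the other three clonings along $uv$ and $vw$. If it happens that $p(u)=p(v)=p(w)$ and $|N_{a-1}(u)|=|N_{a-1}(v)|=|N_{a-1}(w)|$ (for instance, if the $(a-1)$-graph is a $5$-cycle with all remaining edges of multiplicity $a$, then every vertex has the same product-degree and $(a-1)$-degree), then none of the four clonings gives a strict improvement in either invariant, and you obtain no contradiction. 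The same obstruction arises in step 4: if $x,x'$ lie in the same $(a-1)$-clique and disagree on an edge to $X_j$, cloning one onto the other leaves the $(a-1)$-count unchanged (their $(a-1)$-neighbourhoods have the same size), and when $p(x)=p(x')$ it also leaves $P$ unchanged. Your phrase ``a $P$-preserving uniformization would contradict secondary maximality'' is therefore not justified: the secondary invariant you chose is simply not strict enough.

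The paper circumvents this by not relying on a monotone invariant at all: it repeatedly clones along any offending edge, observing that once an edge has been processed its endpoints remain twins, and bounds the number of steps by $\binom{n+1}{2}$. If you want to salvage the extremal-characterization packaging, you would need a sharper secondary invariant---for example, the number of ordered twin pairs, or the number of distinct ``neighbourhood types''---that \emph{strictly} improves under every cloning along a bad edge; maximizing the $(a-1)$-edge count alone does not suffice. (Your step 5 argument for $m\le n$ via K\H{o}v\'ari--S\'os--Tur\'an also tacitly imports a lower bound on $n$ that is not present in the statement, but this is a secondary concern.)
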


The modified multigraph $G'$ can be viewed as a vertex-weighted subgraph $H$ of $K_m$, with the vertex $i$ receiving weight $\vert X_i\vert$ and the edge $ij$ included in $H$ if the edges between $X_i$ and $X_j$ have multiplicity $a+1$ in $G'$, and omitted if they have multiplicity $a$. We will perform some optimisation to show that either $G'$ contains a vertex with product-degree at most $\Pi_{2,1}(a,n+1)/\Pi_{2,1}(a,n)$ (provided $n$ is sufficiently large) or  $H$ is acyclic. In the former case we remove a low product-degree vertex and repeat our argument. In the  latter case, we perform some simple local modifications to show there exists $G''$ such that $P(G')\leq P(G'')$ and $G'' \in \mathcal{T}_{2,1}(a,n)$, from which the theorem follows.

\subsection{The base cases and some useful properties}
We first establish the value of $\mathrm{ex}_{\Pi}(n, 4, \binom{4}{2}a +3)$ for
$n=4,5,6$.
\begin{proposition}[Base cases]\label{proposition: small base case} The following hold for all integers $a\geq 2$:
	\begin{enumerate}[(i)]
		\item $\mathrm{ex}_{\Pi}(4, 4, \binom{4}{2}a +3) = \Pi_{2,1}(a,4)=a^3(a+1)^3$,
		\item $\mathrm{ex}_{\Pi}(5, 4, \binom{4}{2}a +3) = a^5(a+1)^5> \Pi_{2,1}(a,5)=(a-1)a^3(a+1)^6$,
		\item $\mathrm{ex}_{\Pi}(6, 4, \binom{4}{2}a +3)= \Pi_{2,1}(a,6)=(a-1)a^6(a+1)^{8}$.
	\end{enumerate}
\end{proposition}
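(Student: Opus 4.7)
The plan is to prove (i) and (ii) by combining averaging with the integral AM--GM inequality of Proposition~\ref{prop: integral AM-GM}, and to handle (iii) by supplementing these tools with a structural bound on the graph of $(a+1)$-edges.

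For (i), the unique $4$-set is $[4]$ itself, whose six edge multiplicities sum to at most $6a+3$; Proposition~\ref{prop: integral AM-GM}(i) then gives $P(G)\le a^3(a+1)^3$, matched by Construction~\ref{construction: lower bound} with $\vert V_0\vert=1$ (three $a$-edges inside $V_1$ and three $(a+1)$-edges across the partition). For (ii), averaging the $4$-set constraint over the five $4$-subsets of $[5]$ (each edge lies in three) yields $e(G)\le 10a+5$, and Proposition~\ref{prop: integral AM-GM}(i) gives $P(G)\le a^5(a+1)^5$; equality is attained by the pentagon--pentagram construction in which the edges of a $5$-cycle $C_5$ carry multiplicity $a+1$ and its complement carries multiplicity $a$, so every $4$-subset has three edges of each type summing to $6a+3$. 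Enumerating the six values of $\vert V_0\vert\in\{0,\ldots,5\}$ shows $\Pi_{2,1}(a,5)=(a-1)a^3(a+1)^6$ (attained at $\vert V_0\vert=2$), and the strict inequality $a^5(a+1)^5>(a-1)a^3(a+1)^6$ follows from $a^2/(a^2-1)>1$.

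For (iii), averaging yields only $e(G)\le 15a+7$ and Proposition~\ref{prop: integral AM-GM}(i) then gives $P(G)\le a^8(a+1)^7$, exceeding the target $(a-1)a^6(a+1)^8$ by the factor $a^2/(a^2-1)$, so a sharper structural input is required. The key observation is that if every edge of $G$ has multiplicity in $\{a,a+1\}$, the graph $H$ of $(a+1)$-edges satisfies $e(H[S])\le 3$ for every $4$-subset $S$, and this forces $e(H)\le 6$. To see this, take a vertex $v$ of maximum $H$-degree $d\ge 3$: examining $4$-sets of the form $\{v\}\cup T$ with $T\subseteq N_H(v)$ of size $3$ shows that $N_H(v)$ is independent in $H$, and examining $4$-sets $\{v,a_i,a_j,b\}$ with $a_i,a_j\in N_H(v)$ and $b$ external shows that each such $b$ has at most one $H$-neighbour in $N_H(v)$; a short case analysis over $d\in\{3,4,5\}$ then yields $e(H)\le 6$. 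Hence in this case $P(G)\le a^9(a+1)^6<(a-1)a^6(a+1)^8$ (reducing to $a^3<(a-1)(a+1)^2$, which holds for all $a\ge 2$).

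To complete (iii), I would argue that at the maximum of $P(G)$ we may assume every multiplicity lies in $\{a-1,a,a+1\}$: any outlier of multiplicity $\le a-2$ or $\ge a+2$ can be shifted closer to $a$, paired when needed with a compensating shift on a disjoint edge to preserve the $4$-set constraint, without decreasing $P(G)$. Letting $m\ge 1$ denote the number of $(a-1)$-edges and $k$ the number of $(a+1)$-edges, the bound $e(G)\le 15a+7$ forces $k\le m+7$, and maximising $(a-1)^m a^{15-m-k}(a+1)^k$ under $m\ge 1$ and $k\le m+7$ gives the optimum $(a-1)a^6(a+1)^8$ at $(m,k)=(1,8)$, realised by Construction~\ref{construction: lower bound} with $\vert V_0\vert=2$. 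The main obstacle in (iii) is making the shift argument rigorous in the presence of $4$-sets that are already tight; the combinatorial bound $e(H)\le 6$ is the essential ingredient ruling out the naive AM--GM maximiser $a^8(a+1)^7$.
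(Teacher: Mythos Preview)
Your proofs of (i) and (ii) are correct and essentially identical to the paper's. For (iii), your structural bound $e(H)\le 6$ on the graph of $(a+1)$-edges (under the $4$-set constraint) is correct and is exactly what the paper establishes as well, though the paper phrases it as ``seven $(a+1)$-edges force a triangle, which then forces at most six $(a+1)$-edges, contradiction''.

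The genuine gap is precisely the one you flag yourself: the shift argument reducing to multiplicities in $\{a-1,a,a+1\}$ does not go through as stated. On six vertices, two disjoint edges lie together in exactly one $4$-set, so a compensating shift on a disjoint edge only balances that single $4$-set; the many $4$-sets containing the increased edge but not the decreased one may now exceed $6a+3$. There is no evident way to repair this locally, and the proposal as written is therefore incomplete for (iii).

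The paper sidesteps the shift entirely. After averaging gives $e(G)\le 15a+7$, it splits into two cases. If $e(G)\le 15a+6$, Proposition~\ref{prop: integral AM-GM}(i) already yields $P(G)\le a^9(a+1)^6<(a-1)a^6(a+1)^8$. If $e(G)=15a+7$, then either some edge has multiplicity at most $a-1$, in which case Proposition~\ref{prop: integral AM-GM}(ii) directly gives $P(G)\le (a-1)a^6(a+1)^8$, or every edge has multiplicity at least $a$. In the latter case the paper does a short explicit case analysis on the maximum multiplicity present ($a+1$, $a+2$, or $a+3$): the $\{a,a+1\}$ case is killed by your $e(H)\le 6$ bound, while the $a+2$ and $a+3$ cases are handled by noting that such a heavy edge forces many surrounding edges down to exactly $a$, and the resulting products are strictly below $(a-1)a^6(a+1)^8$. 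This avoids any need to normalise multiplicities, and the application of Proposition~\ref{prop: integral AM-GM}(ii) is the clean replacement for your optimisation over $(m,k)$.
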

\begin{proof}
The integral version of the AM-GM inequality (Proposition~\ref{prop: integral AM-GM}(i)) immediately implies that $ \mathrm{ex}_{\Pi}(4, 4, \binom{4}{2}a +3)\leq a^3(a+1)^3$. Partitioning $[4]$ into $V_0=\{1\}$ and $V_1=\{2,3,4\}$ then shows  that $a^3(a+1)^3 \leq \Pi_{2,1}(a,4) \leq \mathrm{ex}_{\Pi}(4, 4, \binom{4}{2}a +3)$, establishing (i).

For (ii), it easy to check that among all graphs $G \in \mathcal{T}_{2,1}(a,5)$, the quantity $P(G)$ is maximised when $V_{0} = \{1,2\}$ and $V_{1}= \{3,4,5\}$, which shows that $\Pi_{2,1}(a, 5)= (a-1)a^3(a+1)^6$.  It is also easy to check that this quantity is strictly less than $a^5(a+1)^5$ when $a \geq 2$.  In the other direction, let $G$ be a product-extremal graph in $\mathcal{F}(5,4,\binom{4}{2}a+3)$. By averaging over all $4$-sets, we see that $e(G)\leq \frac{5}{3}\left(\binom{4}{2}a+3\right)= \binom{5}{2}a+5$. The integral AM-GM inequality (Proposition~\ref{prop: integral AM-GM}(i)) then gives $P(G)\leq a^5(a+1)^5$.
On the other hand, consider the $5$-vertex multigraph $G'$ whose edges of multiplicity $a+1$ form a $5$-cycle $C_5$, with all other edges having multiplicity $a$. Then $P(G')= a^5(a+1)^5$ and $G' \in \mathcal{F}(5,4,\binom{4}{2}a+3)$; so indeed,
$\mathrm{ex}_{\Pi}(5, 4, \binom{4}{2}a +3) = a^5(a+1)^5$.

For (iii), consider the graph $G \in \mathcal{T}_{2,1}(a,6)$ with $V_{0} = \{1,2\}$ and $V_{1} = \{3,4,5,6\}$.  This graph shows that $\Pi_{2,1}(a,6) \geq (a-1)a^{6}(a+1)^{8}$.  In the other direction, let $G$ be a product-extremal graph in $\mathcal{F}(6,4,\binom{4}{2}a+3)$. By averaging over all $4$-sets, we see that $e(G)\leq \left\lfloor\frac{15}{6}\left(\binom{4}{2}a+3\right)\right\rfloor= \binom{6}{2}a+\left\lfloor \frac{15}{2}\right \rfloor = \binom{6}{2}a + 7$. If $e(G)\leq \binom{6}{2}a + 6$, then by the integral version of the AM-GM inequality, $P(G)\leq a^9 (a+1)^6< (a-1)a^6(a+1)^8$. Suppose now that $e(G)= \binom{6}{2}a + 7$. If $G$ contains at least one edge of multiplicity at most $a-1$, then by Proposition~\ref{prop: integral AM-GM}(ii), $P(G)\leq (a-1)a^{6}(a+1)^8$.

On the other hand, suppose all edges of $G$ have multiplicity at least $a$. Since $G\in \mathcal{F}(6, 4, \binom{4}{2}a+3)$, this implies the edges in $G$ have multiplicity between $a$ and $a+3$. If every edge of $G$ has multiplicity either $a$ or $a+1$, then $G$ has exactly $7$ edges of multiplicity $a+1$. 
This, together with the property that no $4$-set of vertices in $G$ contains four edges of multiplicity $a+1$, implies that
 there is a triangle $T$ of edges with multiplicity $a+1$ in $G$.\footnote{Indeed, suppose no such triangle exists. Then there is a path $P$ of length $2$ in $G$ with both edges of 
multiplicity $a+1$. Further, (i) there must be  a vertex outside of $P$ that sends at
least two edges of multiplicity $a+1$ to $P$  or (ii) $V(G)\backslash P$ itself spans a path $P'$ of length $2$ in $G$ with both edges of 
multiplicity $a+1$, and there is a perfect matching between the vertices of $P$ and $P'$ consisting of edges of multiplicity $a+1$. In both cases we obtain a $4$-set containing at least $4$ edges of multiplicity $a+1$, a contradiction.}
 Since $G\in \mathcal{F}(6, 4, \binom{4}{2}a+3)$, our assumptions on $G$ imply every edge from $V(G)\setminus V(T)$ to $V(T)$ has multiplicity exactly $a$. But then $G$ can have at most $6$ edges of multiplicity $a+1$, a contradiction.  If $G$ contains an edge of multiplicity $a+3$, then every other edge has multiplicity exactly $a$, and $P(G)= (a+3)a^{14}< (a-1)a^6(a+1)^8$ for $a\geq 2$. 

Finally if $G$ contains an edge  $e_0$ of multiplicity $a+2$, then every other edge has multiplicity $a$ or $a+1$. Suppose no edge of multiplicity $a+1$ is incident with $e_0$. Then $P(G)\leq w(e_0)a^8 \mathrm{ex}_{\Pi}(4,4,\binom{4}{2}a+3)= a^{11}(a+1)^3(a+2) <(a-1)a^6(a+1)^8$.   On the other hand, suppose there is some vertex $v$ sending an edge of multiplicity $a+1$ to one of $e_0$'s endpoints. Then this vertex is unique, it sends exactly one such edge into $e_0$, and every other vertex sends only edges of multiplicity $a$ into $e_0\cup\{v\}$. Thus $P(G)\leq P(G[e_0\cup\{v\}]) a^9(a+1)^3= a^{10}(a+1)^4(a+2)<(a-1)a^6(a+1)^8$ for $a\geq 2$.  

Altogether, this shows that $\mathrm{ex}_{\Pi}(6,4, \binom{4}{2}a+3)\leq (a-1)a^6(a+1)^8$, which gives the result. 
\end{proof}

The next results together show that if $G$ has a `heavy triangle' or a `heavy edge', then $G$ must contain a vertex of relatively small product-degree. This can be thought of as a specialised and more precise version of Theorem~\ref{theorem: step down}.
\begin{lemma}[Heavy triangles give low product-degree vertices]\label{lemma: heavy triangles give low-deg vces}
Let $G\in \mathcal{F}(n+1, 4, \binom{4}{2}a+3)$, with $n\geq 6$. Then either every triangle in $G$ has edge-sum at most $\binom{3}{2}a+2$ or $G$ contains a vertex of product-degree at most $(a+1)^2a^{n-2}$.	
\end{lemma}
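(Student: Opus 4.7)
My plan is a direct averaging argument over the three vertices of a heavy triangle. Assume the first alternative fails, so $G$ contains a triangle $T=\{u_1,u_2,u_3\}$ with $e(G[T])=3a+3+k$ for some integer $k\ge 0$; the goal is to exhibit some $u\in T$ with $p_G(u)\le (a+1)^2 a^{n-2}$.

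The first step is to observe that for every $v\in V(G)\setminus T$, applying the $(n+1,4,6a+3)$-hypothesis to the $4$-set $T\cup\{v\}$ gives $\sum_{u\in T}w(uv)\le 3a-k$. In particular $k\le 3a$, and the AM--GM inequality yields $\prod_{u\in T}w(uv)\le (a-k/3)^3$ for each such $v$, as well as $P(G[T])\le (a+1+k/3)^3$. Second, using the identity
\[ \prod_{u\in T}p_G(u)\;=\;P(G[T])^2\prod_{v\notin T}\prod_{u\in T}w(uv), \]
which holds because each edge inside $T$ is counted in exactly two of the $p_G(u_i)$ and each edge from $T$ to $V(G)\setminus T$ is counted in exactly one, combining the two bounds gives
\[ \prod_{u\in T}p_G(u)\;\le\; (a+1+k/3)^6 (a-k/3)^{3(n-2)}. \]

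The main technical step is to show the right-hand side is at most $(a+1)^6 a^{3(n-2)}$ for every $k\in[0,3a]$ and $n\ge 6$, since averaging then produces some $u\in T$ with $p_G(u)\le (a+1)^2 a^{n-2}$. Taking logarithms and setting $f(k):=6\log(a+1+k/3)+3(n-2)\log(a-k/3)$, one computes that $f''(k)<0$, so $f$ is concave, and $f'(0)=2/(a+1)-(n-2)/a$, which is nonpositive whenever $n\ge 4$ and $a\ge 1$. Hence $f'(k)\le f'(0)\le 0$ on $[0,3a]$ and $f$ is non-increasing, giving $f(k)\le f(0)$ as required. This one-variable calculus inequality is essentially the only obstacle: it is tight at $k=0$, corresponding to a triangle of edges of multiplicity $a+1$ as in the $\mathcal{T}_{2,1}(a,n+1)$-construction, so no slack can be extracted from that direction; nevertheless the concavity argument is straightforward and the hypothesis $n\ge 6$ leaves comfortable room.
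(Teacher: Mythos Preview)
Your proof is correct and follows essentially the same approach as the paper's own proof: both bound $\prod_{u\in T}p_G(u)$ by $(a+1+k/3)^6(a-k/3)^{3(n-2)}$ via AM--GM and then observe this is maximised at $k=0$. The only difference is that you spell out the calculus step (concavity of $f$ and the sign of $f'(0)$) that the paper dismisses with ``simple calculus shows that for $n\geq 6$, this expression is strictly decreasing in $x$''.
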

\begin{proof}
Suppose $G$ contains a triangle $T$ whose edge-sum is equal to $\binom{3}{2}a+3+ 3x$, for some non-negative rational $x$. Then, since $G\in \mathcal{F}(n+1, 4, \binom{4}{2}a+3)$, every vertex $v\in V(G)\setminus V(T)$ sends at most $3a -3x$ edges (counting multiplicities) into $V(T)$. Thus by the AM-GM inequality, the product of the product-degrees of the vertices of $T$ is at most $(a+1+x)^6(a-x)^{3(n-2)}$. Simple calculus shows that for $n\geq 6$, this expression is strictly decreasing in $x$. By averaging, this implies that some vertex in $T$ has product-degree at most $(a+1)^2a^{n-2}$.
\end{proof}
\begin{lemma}[Heavy edges give low product-degree vertices]\label{lemma: heavyedges give low-deg vces}
	Let $G\in \mathcal{F}(n+1, 3, \binom{3}{2}a+2)$, with $n\geq 6$. Then either every edge in $G$ has multiplicity at most $a+1$ or $G$ contains a vertex of product-degree at most $(a+2)a^{n-1}$.	
\end{lemma}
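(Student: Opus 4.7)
The approach mirrors that of Lemma~\ref{lemma: heavy triangles give low-deg vces} but at the level of a single edge rather than a triangle. I would start by supposing $G$ contains an edge $uv$ with $w(uv)=a+2+t$ for some integer $t\geq 0$, and then exploit the $3$-set constraint on $\{u,v,x\}$ for each $x\in V(G)\setminus\{u,v\}$ to obtain $w(ux)+w(vx)\leq (3a+2)-(a+2+t)=2a-t$. The AM--GM inequality then gives $w(ux)w(vx)\leq (a-t/2)^2$, so multiplying across the $n-1$ choices of $x$ yields
\[p(u)\cdot p(v) \;=\; w(uv)^2 \prod_{x\in V(G)\setminus\{u,v\}} w(ux)w(vx) \;\leq\; (a+2+t)^2 (a-t/2)^{2(n-1)}.\]
Taking a geometric mean, at least one of $u,v$ has product-degree at most $f(t):=(a+2+t)(a-t/2)^{n-1}$.

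To finish, I would verify that $f(t)$ is non-increasing on the admissible range $t\in[0,2a]$ whenever $n\geq 6$, so that $f(t)\leq f(0)=(a+2)a^{n-1}$, producing the desired low product-degree vertex. This is a short calculus check: $f'(t)\leq 0$ precisely when $2a-t\leq (n-1)(a+2+t)$, which certainly holds for every $t\geq 0$ once $n\geq 6$, since then $2a<5(a+2)\leq (n-1)(a+2)$. I do not expect any genuine obstacle here --- the whole argument is a one-line averaging once the correct $3$-set constraint and AM--GM have been applied, and the monotonicity check is routine. The only subtlety is that one must take the geometric mean of $p(u)$ and $p(v)$ (rather than summing over all three vertices of a triangle as in Lemma~\ref{lemma: heavy triangles give low-deg vces}), but this is precisely what the statement asks for.
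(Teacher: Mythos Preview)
Your proof is correct and is essentially identical to the paper's own argument, up to the reparametrisation $t=2x$ (the paper writes the heavy edge multiplicity as $a+2+2x$ for a non-negative rational $x$). Both proofs bound $p(u)p(v)$ by $(a+2+t)^2(a-t/2)^{2(n-1)}$ via the $3$-set constraint and AM--GM, then take a geometric mean and verify monotonicity in the excess parameter for $n\geq 6$.
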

\begin{proof}
Suppose $G$ contains an edge $e_0$ of multiplicity $a+2+2x$, for some non-negative rational $x$. Then, since $G\in \mathcal{F}(n+1, 3, \binom{3}{2}a+2)$, every vertex $v\in V(G)\setminus e_0$ sends at most $2a -2x$ edges (counting multiplicities) into $e_0$. Thus by the AM-GM inequality, the product of the product-degrees of the vertices of $e_0$ is at most $(a+2+2x)^2(a-x)^{2(n-1)}$. Simple calculus shows that for $n\geq 6$, this expression is strictly decreasing in $x$. By averaging, this implies that some vertex in $e_0$ has product-degree at most $(a+2)a^{n-1}$.		
\end{proof}

\subsection{Proof of Proposition~\ref{prop: modifying G to get the clique structure}}
\begin{proof}[Proof of Proposition \ref{prop: modifying G to get the clique structure}]
We define a sequence of multigraphs $G_i$ satisfying (a)--(b) as follows. Set $G_0:=G$. At each stage $i\geq 0$, if $G_i$ satisfies (a)--(c), we are done. Otherwise $G_i$ contains either an edge $v_1v_2$ of multiplicity strictly less than $a-1$ or an edge $v_1v_2$ of multiplicity exactly $a-1$ and at least one vertex $v_3\in V(G)\setminus\{v_1, v_2\}$ such that $w(v_1v_3)\neq w(v_2v_3)$. In either case, assume without loss of generality that $p_{G_i}(v_1)\leq p_{G_i}(v_2)$. Then we define a new multigraph $G_{i+1}$ on the same vertex set by setting $w_{G_{i+1}}(v_1v):=w_{G_{i}}(v_2v)$ for all $v\in V(G)\setminus\{v_1, v_2\}$; $w_{G_{i+1}}(v_1v_2):=a-1$; $w_{G_{i+1}}(uv):=w_{G_i}(uv)$ for all other pairs $uv$.

We claim that $G_{i+1}$ satisfies (a)--(b).	Indeed, 
\begin{align*}
P(G_{i+1})&=P(G_i) \frac{(a-1)}{w_{G_i}(v_1v_2)} \frac{p_{G_i}(v_2)}{p_{G_i}(v_1)}\geq P(G_i),
\end{align*}
so as $P(G_i)\geq P(G)$, (b) holds. As $G_{i} \in \mathcal{F}(n+1, 2, a+1)$, it is clear that $G_{i+1} \in \mathcal{F}(n+1, 2, a+1)$.  It is also clear that for any set of 3 vertices (respectively set of 4 vertices) that does not include both $v_{1}$ and $v_{2}$, the sum of the multiplicities of the edges spanning these vertices is at most ${3 \choose 2} a + 2$ (respectively ${4 \choose 2} a + 3$) in $G_{i+1}$.  Given any $v_{4} \in V(G) \setminus \{v_1,v_2\}$, we know that $w_{G_{i+1}}(v_{1},v_{2}) = a-1$ and $w_{G_{i+1}}(v_{1},v_{4}) = w_{G_{i+1}}(v_{2},v_{4}) \leq a+1$.  Thus the set $\{v_1,v_2,v_{4}\}$ spans at most $3a+1$ edges, and so $G_{i+1} \in \mathcal{F}(n+1, 3, {3 \choose 2}a+2)$. 

 Similarly, given any distinct $v_{4}, v_{5} \in V(G) \setminus \{v_1,v_2\}$, we know that $w_{G_{i+1}}(v_{1},v_{2}) = a-1$, $w_{G_{i+1}}(v_{1},v_{4})\leq a+1$, $w_{G_{i+1}}(v_{1},v_{5}) \leq a+1$, and the set $\{v_2,v_4,v_{5}\}$ spans at most ${3 \choose 2}a + 2$ edges (as $G_{i} \in \mathcal{F}(n+1, 3, {3 \choose 2}a+2)$).  Thus the set $\{v_1,v_2,v_{4}, v_5\}$ spans at most ${4 \choose 2}a+3$ edges, and so $G_{i+1} \in \mathcal{F}(n+1, 4, {4 \choose 2}a+3)$.  Therefore $G_{i+1} \in \bigcap_{2\leq t \leq 4}\mathcal{F}(n+1, t, \binom{t}{2}a+t-1)$ and so $G_{i+1}$ satisfies (a).

Observe that  for all $j\geq i+1$ and $v_3\in V(G)\setminus\{v_1,v_2\}$, we have $w_{G_j}(v_1v_2)= w_{G_{i+1}}(v_1v_2)=a-1$ and $w_{G_j}(v_1v_3)=w_{G_j}(v_2v_3)$.
Thus, after $T\leq \binom{n}{2}$ steps all edges $v_1v_2$ of multiplicity $a-1$ will satisfy this condition (and there will be no edges of multiplicity less than $a-1$); it is easy to see that this immediately implies $G_T$ satisfies (c).
 Setting $G':=G_T$ concludes the proof.	
\end{proof}

\subsection{Further auxiliary results}
Given a multigraph $G'$ satisfying Proposition~\ref{prop: modifying G to get the clique structure}(c), we define an auxiliary vertex-weighted graph $H$ on $[m]$ as outlined in Section~\ref{sec71}: we assign to the vertex $i\in [m]$ the weight $\alpha_i:=\vert X_i\vert $, and we include the edge $ij$ in $E(H)$ if and only if edges from $X_i$ to $X_j$ have multiplicity $a+1$ in $G'$.
\begin{proposition}\label{prop: acyclic H is good}
Let $G'$ be a multigraph on $n$ vertices satisfying (c), and  let $H$ be the auxiliary vertex-weighted graph defined from $G'$ as above.  If $H$ is acyclic, then there exists $G'' \in \mathcal{T}_{2,1}(a,n)$ such that $P(G')\leq P(G'')$.
\end{proposition}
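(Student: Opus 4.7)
My plan is to pick $i_0 \in [m]$ attaining $\alpha_{i_0} = \max_i \alpha_i$ and to take $G''$ to be the member of $\mathcal{T}_{2,1}(a,n)$ whose canonical partition is $V_0 := X_{i_0}$, $V_1 := V(G') \setminus X_{i_0}$. By construction $G'' \in \mathcal{T}_{2,1}(a,n)$, so the task reduces to verifying $P(G'') \geq P(G')$.

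First I would compare $G'$ and $G''$ edge by edge through the cluster decomposition. Inside $V_0 = X_{i_0}$ the two multigraphs agree (both assign multiplicity $a-1$); inside $V_1$, going from $G'$ to $G''$ upgrades every pair lying within a single cluster $X_i$ (for $i \neq i_0$) from $a-1$ to $a$, and downgrades every pair between two clusters $X_i, X_j$ distinct from $X_{i_0}$ with $ij \in E(H)$ from $a+1$ to $a$; across the $V_0$--$V_1$ cut $G''$ upgrades every $X_{i_0}$--$X_j$ pair with $i_0 j \notin E(H)$ from $a$ to $a+1$. Collecting these changes and using $\alpha_{i_0}(n-\alpha_{i_0}) = \sum_{j \neq i_0}\alpha_{i_0}\alpha_j$ to tidy the cross-cut contribution, the comparison should collapse to
\[
\log \frac{P(G'')}{P(G')} = \log \frac{a}{a-1} \sum_{i \neq i_0} \binom{\alpha_i}{2} + \log \frac{a+1}{a}\Bigl(\alpha_{i_0}(n-\alpha_{i_0}) - \sum_{ij \in E(H)} \alpha_i \alpha_j\Bigr).
\]
The first summand is manifestly non-negative, so the whole proposition reduces to the single inequality
\[
(\star)\qquad \sum_{ij \in E(H)} \alpha_i \alpha_j \;\leq\; \alpha_{i_0}(n-\alpha_{i_0}).
\]

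The main obstacle is $(\star)$, which is exactly where the acyclicity of $H$ enters. My plan is a short rooting argument: root each connected component of the forest $H$, choosing $i_0$ itself as the root of its own component and any vertex as the root of each of the other components. Each edge of $H$ then has the form $(p(v),v)$ for a unique non-root vertex $v$, and since $\alpha_{i_0}$ is the global maximum, $\alpha_{p(v)} \leq \alpha_{i_0}$. Summing over non-root $v$ yields
\[
\sum_{ij \in E(H)} \alpha_i \alpha_j = \sum_v \alpha_{p(v)} \alpha_v \;\leq\; \alpha_{i_0} \sum_v \alpha_v \;\leq\; \alpha_{i_0}(n-\alpha_{i_0}),
\]
where the last inequality holds because $i_0$ is itself among the roots, so the roots of $H$ have total weight at least $\alpha_{i_0}$ and the remaining vertices contribute weight at most $n-\alpha_{i_0}$. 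This establishes $(\star)$ and hence the proposition.
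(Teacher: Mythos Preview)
Your proof is correct and uses essentially the same idea as the paper: both exploit the forest structure of $H$ by rooting each component (with the global maximum-weight vertex as the root of its own component) and bounding each edge's contribution $\alpha_{p(v)}\alpha_v$ by $\alpha_{i_0}\alpha_v$. The paper packages this as a two-step construction---first replacing each edge $\overrightarrow{uv}$ of the oriented forest by $uv_0$ to get an intermediate star-shaped $H'$, then filling in the remaining edges to reach $\mathcal{T}_{2,1}(a,n)$---whereas you go directly to $G''$ and isolate the single inequality $(\star)$; but the underlying combinatorics is the same.
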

\begin{proof}
Suppose $H$ is acyclic. Then $H$ is a forest. For each connected component $C$ of $H$, let $v_C$ denote a vertex of $C$ with maximum weight, and orient each edge of $C$ towards $v_C$; so every edge has an orientation and we have maximum out-degree $1$. Further, let $v_0$ denote a vertex of $H$ with the maximum weight.  Given two vertices $u,v \in V(H)$, we write $\overrightarrow{uv} \in E(H)$ to mean $H$ has an edge oriented from $u$ to $v$ in our orientation. 

We define a new vertex weighted graph $H'$ from our orientation of $H$ as follows.  The two graphs $H$ and $H'$ have the same vertex set and the same vertex-weighting, and $H'$ has the edge set $E(H') := \{uv_{0}:\overrightarrow{uv} \in E(H) \}$.  We can now reverse-engineer our construction of an auxiliary vertex-weighted graph, and obtain from $H'$ a multigraph $G'_2$, by assigning to each vertex $v_i$ in $H'$ with (positive integer) weight $\alpha_i$  a  set $V_i$ of $\alpha_i$ vertices, and setting multiplicities of a pair $uv$ in $G'_2$ to be equal to $a-1$ if $uv$ lies inside some $V_i$; equal to $a+1$ if $u\in V_i$ and $v\in V_j$ for some $ij \in E(H')$;  equal to $a$ otherwise.

Now each oriented edge $\overrightarrow{uv}$ in $H$ was replaced in $H'$ by the edge $uv_0$, where the weight of $v_0$ is at least that of $v$. Since the out-degrees in our orientation of $H$ are bounded above by $1$, it follows that going from $G'$ to $G'_2$, the number of edges of multiplicity $a+1$ has not decreased, while the number of edges of multiplicity $a-1$ has remained constant. This immediately implies $P(G')\leq P(G'_2)$.

Finally consider replacing all edges between $V_{0}$ and $V(G')\setminus V_{0}$ in $G'_2$  with edges of multiplicity $a+1$, and all edges between two vertices in $V(G')\setminus V_{0}$ with edges of multiplicity $a$.  Let us call this new graph $G''$.  Note that every time we replaced an edge of $G'_2$, we never decreased its multiplicity.  Thus we have that $P(G'_2)\leq P(G'')$.  Moreover, it is easy to see that $G''\in \mathcal{T}_{2,1}(a,n)$, as desired, and so we have proven the result.
\end{proof}

\begin{lemma}[Cycles in $H$ give low product-degree vertices]\label{lemma: H is not acyclic implies low-deg vces}
	Let $G'$ be a multigraph on $n+1$ vertices satisfying both (a) and (c) of Proposition \ref{prop: modifying G to get the clique structure}, and let $H$ be the auxiliary graph defined above.  If $H$ is not acyclic, then $G'$ contains a vertex with product-degree at most $a^{\frac{4n-6}{5}}(a+1)^{\frac{n+6}{5}}$. 
\end{lemma}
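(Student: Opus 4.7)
The plan is to translate the constraint $G'\in\mathcal{F}(n+1,4,6a+3)$ into structural restrictions on $H$, and then split into cases on the length $k$ of a shortest cycle of $H$.

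I would start with two easy structural observations. First, $H$ contains no $4$-cycle: any $4$-cycle $i_1i_2i_3i_4$ of $H$ gives a $4$-set of $G'$ (one vertex per part) with edge-sum $4(a+1)+2a=6a+4$, contradicting condition (a). Second, every triangle of $H$ has all three parts of size exactly $1$; otherwise, taking two vertices from a larger part and one each from the other two yields edge-sum $(a-1)+5(a+1)=6a+4$. In particular a shortest cycle $C$ in $H$ has length $k=3$ or $k\geq 5$.

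If $k=3$, write $\{u_1,u_2,u_3\}$ for the triangle. For any other vertex $v$, the $4$-set $\{u_1,u_2,u_3,v\}$ forces $\sum_{j}w(u_jv)\leq 3a$; since $v$ lies in none of the three singleton parts, each $w(u_jv)\in\{a,a+1\}$, and a single $(a+1)$ contribution would already push the sum above $3a$. Hence every such edge has multiplicity exactly $a$, and $\prod_{j=1}^3p(u_j)=(a+1)^{6}a^{3(n-2)}$. Averaging gives a vertex with $p(u_j)\leq(a+1)^{2}a^{n-2}$, which one routinely checks to be at most $a^{(4n-6)/5}(a+1)^{(n+6)/5}$ for $n\geq 4$.

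If $k\geq 5$, I fix five consecutive vertices $u_1,\ldots,u_5$ of $C$ and bound $\prod_{j=1}^{5}p(u_j)=P(G'[\{u_1,\ldots,u_5\}])^{2}\prod_{v\notin\{u_1,\ldots,u_5\}}\prod_{j=1}^{5}w(u_jv)$. The internal factor is $(a+1)^{10}a^{10}$ when $k=5$ and $(a+1)^{8}a^{12}$ when $k\geq 6$. For each $v$ outside the five chosen vertices, I bound $\prod_{j=1}^{5}w(u_jv)$ using two ingredients: the $4$-set constraints on $\{u_i,u_{i+1},u_{i+2},v\}$, which show that at most one of $w(u_iv),w(u_{i+1}v),w(u_{i+2}v)$ equals $a+1$; and the girth lower bound $k$ on $H$, which forbids any pair of $(a+1)$-edges from an external $v$ to $\{u_1,\ldots,u_5\}$, since such a pair would close an $H$-cycle of length strictly less than $k$ (either through the path $u_1\cdots u_5$ or through the complementary arc of $C$). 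Together these yield $\prod_{j}w(u_jv)\leq(a+1)a^{4}$ for every external $v$; direct computation gives the same upper bound for $v\in X_{i_\ell}\setminus\{u_\ell\}$ (where an $(a-1)$-factor absorbs the extra $(a+1)$'s) and for vertices in cycle parts other than the special part $X_{i_6}$ in the $k=6$ case. Multiplying these estimates yields $\prod_{j=1}^{5}p(u_j)\leq(a+1)^{n+6}a^{4n-6}$, whence AM--GM delivers a vertex of product-degree at most $a^{(4n-6)/5}(a+1)^{(n+6)/5}$.

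The main obstacle is the case $k=6$ with $|X_{i_6}|\geq 3$: vertices in $X_{i_6}$ are $H$-adjacent to both $X_{i_1}$ and $X_{i_5}$, contributing the larger factor $(a+1)^{2}a^{3}$ to the product, and external vertices that are $H$-adjacent to $X_{i_6}$ contribute $(a+1)^{3}a^{2}$. Overcoming this requires a different choice of the $5$-set, for instance $\{u_1,u_5\}$ together with three vertices of $X_{i_6}$, which generates three internal $(a-1)$-edges that partially offset the excess $(a+1)/a$ factors. One then exploits the fact that, by the girth-$6$ and no-triangle conditions, the $H$-neighbours of $X_{i_6}$ outside $C$ form an $H$-independent set non-adjacent to any other cycle part; the resulting saturated $4$-subsets $\{v,v',u_1,u_5\}$ with $v,v'\in X_{i_6}$ tighten all remaining external contributions, and a careful bookkeeping restores the desired product bound.
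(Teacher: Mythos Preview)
Your opening structural analysis overlooks that condition (a) of Proposition~\ref{prop: modifying G to get the clique structure} is $G'\in\bigcap_{2\leq t\leq 4}\mathcal{F}(n+1,t,\binom{t}{2}a+t-1)$, not merely the $t=4$ case. The $t=3$ constraint $G'\in\mathcal{F}(n+1,3,3a+2)$ already forces $H$ to be triangle-free (any $H$-triangle yields a $3$-set in $G'$ with edge-sum $3(a+1)>3a+2$), so your $k=3$ case is vacuous and the girth of $H$ is automatically at least $5$. This is harmless, but it simplifies matters.

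The substantive gap is your treatment of $k=6$. Your five-vertex averaging scheme genuinely breaks there: with the usual choice $\{u_1,\ldots,u_5\}$, each vertex of $X_{i_6}$ contributes a factor $(a+1)^2a^3$ rather than $(a+1)a^4$, and a short calculation shows $\prod_{j=1}^5 p(u_j)\leq (a+1)^{n+6}a^{4n-6}$ fails once $|X_{i_6}|\geq 3$. Your proposed fix --- replacing $u_2,u_3,u_4$ by three vertices of $X_{i_6}$ --- runs into the symmetric obstruction that every $G'$-vertex lying in an $H$-neighbour of $i_6$ outside the cycle now contributes $(a+1)^3a^2$, and there is no control on how many such vertices there are. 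The ``saturated $4$-subsets $\{v,v',u_1,u_5\}$'' you invoke carry no new information: condition (a) only constrains $4$-sets, and these particular $4$-sets are already tight at $6a+3$, so they cannot be used to bound the contribution of a fifth vertex. The sentence ``a careful bookkeeping restores the desired product bound'' is not a proof, and I do not see how to complete it along the lines you sketch.

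The paper avoids this entirely by averaging over representatives of \emph{all} $t$ vertices of a shortest cycle rather than just five of them. Since any vertex of $H$ outside a shortest cycle sends at most one $H$-edge into it, every external $G'$-vertex contributes at most one factor of $(a+1)$ to $\prod_{i=1}^t p_{G'}(x_i)$, uniformly in $t$; one obtains a bound on $\frac{1}{t}\sum_i\log p_{G'}(x_i)$ that is monotone decreasing in $t$, and evaluating at $t=5$ gives exactly the claimed bound. In particular, for $k=6$ one simply averages over six representatives instead of five, and the troublesome part $X_{i_6}$ is absorbed into the cycle rather than left outside it.
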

\begin{proof}
Suppose $H$ is not acyclic. Let $t$ denote the girth of $G$. By relabelling vertices of $V(H)=[m]$ if necessary, we may assume that $[t]$ induces a shortest cycle $C$ in $H$, with $12,23, \ldots, (t-1)t, t1$ all being edges of $H$. Since $C$ is a shortest cycle, no other edge is present in $H[t]$. Since $G'$ satisfies (a), it immediately follows that every $3$-set in $H$ spans at most $2$ edges of $H$ and every $4$-set in $H$ spans at most $3$ edges of $H$. In particular, $H$ is both $K_3$-free and $C_4$-free, and its girth $t$ satisfies $t\geq 5$.  This further implies that every vertex $v\in [m]\setminus [t]$ sends at most one edge of $H$ into $[t]$ --- since otherwise, $H$ would contain a strictly shorter cycle.

Partition $[m]\setminus [t]$ into $Y_i:=\{v\in [m]\setminus [t]:\ iv\in E(H)\}$, $1\leq i \leq t$ and $Y_0:=\left([m]\setminus [t] \right)\setminus\left(\cup_{i=1}^t Y_i\right)$.
For each $i \in \{0,\dots, t\}$, write $Z_i:= \cup_{j \in Y_i} X_j$.
 For each vertex $i\in [t]$, choose a representative vertex $x_i\in X_i$ in $V(G')$. Further, write $N$ for $\sum_{i=1}^t \vert X_i \vert$.  Then, adopting the convention that the indices of the $X_i$'s are considered cyclically so that $X_{0}:=X_t$ and $X_{t+1}:=X_1$, we have that for each  $1\leq i\leq t$,
\begin{align*}
p_{G'}(x_i)= (a-1)^{\vert X_i\vert -1} a^{n+1- \vert X_i\vert -\vert X_{i-1}\vert - \vert X_{i+1}\vert - \vert Z_i\vert }(a+1)^{\vert X_{i-1}\vert +\vert X_{i+1}\vert  + \vert Z_i\vert}.
\end{align*}
Taking logarithms and averaging, we have:
\begin{align*}
\frac{1}{t}\sum_{i=1}^t \log p_{G'}(x_i) &= \frac{1}{t}\Bigl(\bigl(N-t\bigr)\log (a-1)+    \bigl(t(n+1) - 3N- (n+1-N-\vert Z_0\vert)\bigr) \log (a)\Bigr.\\
& \Bigl.\quad  + \bigl(2N+ (n+1-N-\vert Z_0\vert)\bigr)\log (a+1) \Bigr)\\
& \leq \frac{1}{t}\Bigl( (N-t)\log (a-1) + \left((t-1)(n+1)-2N\right)\log (a) + \left(n+1+N\right)\log (a+1)\Bigr). 
\end{align*}
The upper bound above is a linear function of $N$, with leading coefficient $\frac{1}{t}\log \left(\frac{(a-1)(a+1)}{a^2}\right)<0$. Since $N\geq t$, it follows that the average of the logarithms of the $p_{G'}(x_i)$ is at most
\begin{align*}
p_t= \frac{1}{t}\Bigl(   \left((t-1)n- t-1\right)\log (a) + \left(n+t+1\right)\log(a+1) \Bigr)= \log \left(a^{n-1}(a+1)\right) + \frac{n+1}{t}\log \left(\frac{a+1}{a}\right) .
\end{align*}
Since $p_t$ is a decreasing function of $t$ and $t\geq 5$, we conclude that one of the $x_i$ has product-degree at most
$e^{p_5}=a^{n-1-\frac{n+1}{5}} (a+1)^{1+\frac{n+1}{5}} = a^{\frac{4n-6}{5}} (a+1)^{\frac{n+6}{5}}$, as claimed.
\end{proof}

The next lemma gives us rather precise information on the vertex class sizes of 
an element from $T^P_{2,1}(a,n)$.
\begin{lemma}\label{lemma: size of partition in T^P_{2,1}(a,n)}
Let 	$a,n\in \mathbb{Z}_{\geq 2}$, $G\in T^P_{2,1}(a,n)$ and let $V_0\sqcup V_1$ be the canonical partition of $V(G)$. Then 
\[\vert V_0\vert \in [(n-1)x_{2\star}, (n-1)x_{2\star}+1]\]
where $x_{2\star}=x_{2\star}(a,1)= \log \left(\frac{a+1}{a}\right)/\log \left(\frac{(a+1)^2}{a(a-1)}\right)$ is the function defined in~\eqref{eq: def of xstar}.
\end{lemma}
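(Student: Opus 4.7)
Given $G \in \mathcal{T}_{2,1}(a,n)$ with canonical partition $V_0 \sqcup V_1$, all edges inside $V_0$ have multiplicity $a-1$, all edges inside $V_1$ have multiplicity $a$, and all edges between $V_0$ and $V_1$ have multiplicity $a+1$. Writing $v_0 := |V_0|$, the edge-product of $G$ depends only on $v_0$, and is given by
\[
P(G) \;=\; (a-1)^{\binom{v_0}{2}} \, a^{\binom{n-v_0}{2}} \, (a+1)^{v_0(n-v_0)}.
\]
So the plan is to view $\log P(G)$ as a function $f(v_0)$ of the integer $v_0 \in [0,n]$, to analyse the discrete derivative $f(v_0+1)-f(v_0)$, and to identify the maximising integer $v_0$.

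First I would compute
\[
f(v_0+1)-f(v_0) \;=\; v_0 \log(a-1) \,-\, (n-v_0-1)\log a \,+\, (n-2v_0-1)\log(a+1),
\]
and rearrange this as
\[
f(v_0+1)-f(v_0) \;=\; (n-1)\log\!\left(\tfrac{a+1}{a}\right) \,-\, v_0 \log\!\left(\tfrac{(a+1)^2}{a(a-1)}\right).
\]
This is an affine, strictly decreasing function of $v_0$ (since $(a+1)^2 > a(a-1)$), so $f$ is strictly concave over the integers. The unique real root $v_0 = z$ of the right-hand side satisfies
\[
z \;=\; (n-1)\cdot \frac{\log\left(\tfrac{a+1}{a}\right)}{\log\left(\tfrac{(a+1)^2}{a(a-1)}\right)} \;=\; (n-1)\, x_{2\star}(a,1),
\]
by the definition of $x_{2\star}$ in~\eqref{eq: def of xstar}.

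Next I would use the concavity and the sign analysis: $f(v_0+1)-f(v_0) > 0$ for $v_0 < z$ and $f(v_0+1)-f(v_0) < 0$ for $v_0 > z$. Hence the integer maximum of $f$ is attained exactly at $v_0 = \lceil z \rceil$ when $z\notin\mathbb{Z}$, and at $v_0 \in \{z, z+1\}$ when $z \in \mathbb{Z}$. In either case, any maximising integer $v_0$ satisfies $v_0 \in [z, z+1] = [(n-1)x_{2\star},\, (n-1)x_{2\star}+1]$. Since by definition every $G \in T^P_{2,1}(a,n)$ has $v_0$ equal to such a maximiser, the lemma follows.

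There is no real obstacle here; the only thing to be careful about is the arithmetic identification of the root $z$ with $(n-1)x_{2\star}(a,1)$, which is immediate from the definition of $x_{2\star}$, and the observation that strict concavity of $f$ over the integers (equivalently, the monotonicity of the discrete derivative) forces every integer maximiser to lie in the closed interval $[z, z+1]$.
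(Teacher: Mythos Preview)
Your proof is correct and follows essentially the same approach as the paper. The paper phrases the argument as ``switching a vertex between $V_0$ and $V_1$ cannot increase $P(G)$'', which yields two multiplicative inequalities that, after taking logarithms, are precisely your conditions $f(x)-f(x-1)\geq 0$ and $f(x+1)-f(x)\leq 0$; both arguments then identify the crossing point of the discrete derivative as $(n-1)x_{2\star}$.
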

\begin{proof}
Set $x:=\vert V_0\vert$.  Observe that for $n\geq 2$, we have $1\leq x\leq n-1$ (since otherwise $V_0=V(G)$ or $V_1=V(G)$ and $G$ trivially fails to maximise the value of $P(G)$ over multigraphs $G\in \mathcal{T}_{2,1}(a,n)$). Since $G\in T^P_{2,1}(a,n)$, we cannot increase $P(G)$ by switching one vertex from $V_1$ to $V_0$, or vice-versa.  This implies the following inequalities:
\begin{align*}
\left(\frac{a-1}{a+1} \right)^x \left(\frac{a+1}{a}\right)^{n-x-1}\leq 1&& \textrm{ and } &&\left(\frac{a+1}{a-1}\right)^{x-1}\left(\frac{a}{a+1}\right)^{n-x}\leq 1.
\end{align*}
Taking logarithms, rearranging terms and combining the inequalities yields
\[  (n-1)x_{2\star} \leq x \leq nx_{2\star}+ \frac{\log \left(\frac{a+1}{a-1}\right)}{\log \left(\frac{(a+1)^2}{a(a-1)}\right)}= (n-1)x_{2\star}+1 . \]	
\end{proof}

\begin{corollary}\label{corollary: ratio Pi_{2,1}(a, n+1)/Pi_{2,1}(a,n)}
For any $n\geq 2$, $\Pi_{2,1}(a, n+1)/\Pi_{2,1}(a,n) \geq \left(a^{1-x_{2\star}}\left(a+1\right)^{x_{2 \star}}\right)^{n}$.
\end{corollary}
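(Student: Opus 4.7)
The plan is to construct an element of $\mathcal{T}_{2,1}(a,n+1)$ by extending a product-extremal $G \in T^{P}_{2,1}(a,n)$ by a single new vertex, and to show that the better of the two natural extensions already witnesses the stated lower bound. Fix such a $G$ with canonical partition $V_0\sqcup V_1$ and set $x:=|V_0|$. Adjoining a new vertex $v$ and placing it either in $V_0$ or in $V_1$, filling in the multiplicities of the $n$ incident edges according to Construction~\ref{construction: lower bound}, produces two multigraphs $G'_0, G'_1 \in \mathcal{T}_{2,1}(a,n+1)$ with
\[
\frac{P(G'_0)}{P(G)} = (a-1)^{x}(a+1)^{n-x}, \qquad \frac{P(G'_1)}{P(G)} = (a+1)^{x} a^{n-x}.
\]
Since $P(G) = \Pi_{2,1}(a,n)$ and $\Pi_{2,1}(a, n+1) \geq \max\{P(G'_0), P(G'_1)\}$, writing $\rho := a^{1-x_{2\star}}(a+1)^{x_{2\star}}$ it will suffice to prove
\[
\max\bigl( (a-1)^{x}(a+1)^{n-x},\ (a+1)^{x} a^{n-x} \bigr) \geq \rho^{n}
\]
for every real $x \in [0,n]$.

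For this last inequality, note that viewed as functions of a real variable $x$, the first quantity is strictly decreasing while the second is strictly increasing. Specialised to $(r,d)=(2,1)$, the defining relation~\eqref{eq: x_star property} for $x_{2\star}$ reads $(a-1)^{x_{2\star}}(a+1)^{1-x_{2\star}} = a^{1-x_{2\star}}(a+1)^{x_{2\star}}$; raising this equality to the $n$-th power identifies the crossing point of the two functions as $x^{\ast} = n x_{2\star}$, with common value $\rho^{n}$. Whichever side of $x^{\ast}$ the value $x = |V_{0}|$ lies on, by monotonicity the corresponding "winning" function takes a value at least $\rho^{n}$ there, which gives the desired bound.

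The main (mild) obstacle is simply recognising that both extensions $G'_{0}, G'_{1}$ are legitimate elements of $\mathcal{T}_{2,1}(a, n+1)$, which requires Construction~\ref{construction: lower bound} to admit empty parts when $V_0$ or $V_1$ happens to be empty in $G$; this is a harmless convention. Observe also that Lemma~\ref{lemma: size of partition in T^P_{2,1}(a,n)} is \emph{not} needed here: the monotonicity argument is uniform in $x$ and does not require pinning down which integer $|V_0|$ actually is.
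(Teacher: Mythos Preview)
Your argument is correct and in fact more direct than the paper's. The paper works from the other end: it invokes Lemma~\ref{lemma: size of partition in T^P_{2,1}(a,n)} to pin down $\vert V_0\vert$ in a product-extremal $G_{n+1}\in T^P_{2,1}(a,n+1)$, chooses a nested sequence so that the last vertex has minimum product-degree, and then lower bounds the \emph{minimum} of the two candidate product-degrees using the two-sided estimate $\vert V_0\vert\in[nx_{2\star},\,nx_{2\star}+1]$. You instead start from an extremal $G_n$, extend it in both possible ways, and lower bound the \emph{maximum} of the two resulting product-degree contributions; the monotonicity/crossing-point argument then works uniformly in $x$, so no information about the actual value of $\vert V_0\vert$ is required. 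Your route avoids Lemma~\ref{lemma: size of partition in T^P_{2,1}(a,n)} entirely, and your aside that the argument is insensitive to which integer $\vert V_0\vert$ is (and even to whether parts may be empty) is exactly the point. The paper's approach, by contrast, yields the ratio as an explicit minimum and so could in principle be sharpened to a two-sided estimate, though only the lower bound is used downstream.
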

\begin{proof}
Lemma~\ref{lemma: size of partition in T^P_{2,1}(a,n)} implies that there exists a nested sequence of multigraphs $\left(G_n\right)_{n \geq 2}$ with	$V(G_n)=[n]$, $G_n[[m]]=G_m$, $p_{G_n}(n)=\min_{i \in [n]} p_{G_n}(i)$  and $G_n \in T^P_{2,1}(a,n)$. Thus to bound 
 the ratio $\Pi_{2,1}(a, n+1)/\Pi_{2,1}(a,n)= P(G_{n+1})/P(G_n)$, it suffices to bound $p_{G_{n+1}}(n+1)$.  Let $V_0\sqcup V_1$ be the canonical partition of $G_{n+1}$, and set $x_{n+1}:=\vert V_0\vert$. We have that 
 \begin{equation}
 \frac{\Pi_{2,1}(a, n+1)}{\Pi_{2,1}(a,n)}
=p_{G_{n+1}}(n+1)
=\min\left((a-1)^{x_{n+1}-1}(a+1)^{n+1-x_{n+1}}, a^{n-x_{n+1}}
(a+1)^{x_{n+1}}\right).
 \end{equation}
 Note that $(a-1)^{x_{n+1}-1}(a+1)^{n+1-x_{n+1}}$ is a decreasing function of $x_{n+1}$, while $(a+1)^{x_{n+1}}a^{n-x_{n+1}}$ is an increasing function of $x_{n+1}$.  Thus, by Lemma \ref{lemma: size of partition in T^P_{2,1}(a,n)}, we have
 \begin{equation}
 \frac{\Pi_{2,1}(a, n+1)}{\Pi_{2,1}(a,n)} \geq \min\left((a-1)^{ x_{2\star}n}(a+1)^{(1- x_{2\star})n}, a^{(1- x_{2\star})n}(a+1)^{ x_{2\star}n}\right).
 \end{equation}
By (\ref{eq: x_star property}) we know that both of the terms in this minimum are equal, which gives the result.
\end{proof}

\subsection{Proof of Theorem~\ref{theorem: mubayi Terry conjecture}}
We are now ready to prove the Mubayi--Terry conjecture.
\begin{proof}[Proof of Theorem~\ref{theorem: mubayi Terry conjecture}]
	Suppose $n\geq 30$. Let $G=G_{n} \in \mathcal{F}(n, 4, \binom{4}{2}a +3)$. If $G_n \notin  \mathcal{F}(n, 3, \binom{3}{2}a +2)$, then by Lemma~\ref{lemma: heavy triangles give low-deg vces} we obtain  a vertex in $G_n$ of product-degree at most $(a+1)^2a^{n-3}$. We remove this vertex from $G_n$ to obtain $G_{n-1}$ and repeat the procedure with $G_{n-1}$ until we obtain a multigraph $G_{n'}$ on $n'$ vertices such that either $n'=6$ or $G_{n'} \in  \mathcal{F}(n', 3, \binom{3}{2}a +2)$.

In the former case, we set $n_0:=n'=6$ and note that $P(G_{n_0})\leq \Pi_{2,1}(a,n_0)$ by Proposition~\ref{proposition: small base case}(iii).	In the latter case, if $G_{n'} \notin  \mathcal{F}(n', 2, \binom{2}{2}a +1)$, then by Lemma~\ref{lemma: heavyedges give low-deg vces} we may find  a vertex in $G_{n'}$ of product-degree at most $(a+2)a^{n'-2}$. We remove this vertex from $G_{n'}$ to obtain $G_{n'-1}$ and repeat the procedure with $G_{n'-1}$ until we obtain a multigraph $G_{n''}$ on $n''$ vertices such that either $n''=6$ or $G_{n''} \in  \mathcal{F}(n'', 2, \binom{2}{2}a +1)$.

In the former case, we set $n_0:=n''=6$ and note that $P(G_{n_0})\leq \Pi_{2,1}(a,n_0)$ by Proposition~\ref{proposition: small base case}(iii).  In the latter case, $G_{n''} \in \cap_{2\leq t\leq 4} \mathcal{F}(n'', t, \binom{t}{2}a + t-1)$.  By Proposition~\ref{prop: modifying G to get the clique structure}, there exists a multigraph $G'_{n''}$ on $n''$ vertices such that $P(G_{n''})\leq P(G'_{n''})$ and such that $G'_{n''}$ satisfies  properties (a) and~(c) of Proposition \ref{prop: modifying G to get the clique structure}. We replace $G_{n''}$ by $G'_{n''}$, and may then define the auxiliary graph $H=H_{n''}$ for $G_{n''}$.

If $H$ is acyclic, then we set $n_0:=n''$ and note that $P(G_{n_0})\leq \Pi_{2,1}(a,n_0)$ by Proposition~\ref{prop: acyclic H is good}. Otherwise by Lemma~\ref{lemma: H is not acyclic implies low-deg vces} we may find a vertex in $G_{n''}$ of product-degree at most $a^{\frac{4n''-10}{5}}(a+1)^{\frac{n''+5}{5}}$. We remove this vertex from $G_{n''}$ to obtain $G_{n''-1}$ and repeat the procedure with $G_{n''-1}$ until we obtain a multigraph $G_{n_0}$ where either $n_0=6$ or the auxiliary graph $H_{n_0}$ for $G_{n_0}$ is acyclic. In either case, we have $P(G_{n_0})\leq \Pi_{2,1}(a, n_0)$.

Summing up the above, we have obtained from $G_n$ a multigraph $G_{n_0}$ on $n_0$ vertices where $6\leq n_0 \leq n$ and with $P(G_{n_0})\leq \Pi_{2,1}(a,n_0)$ by removing, for each stage $i$  (where $n_0< i \leq n$), a vertex with product-degree at most
\[\max\left(a^{i-2}(a+2), a^{i-3}(a+1)^2, a^{\frac{4i-10}{5}}(a+1)^{\frac{i+5}{5}}\right)=a^{\frac{4i-10}{5}}(a+1)^{\frac{i+5}{5}},\]
from $G_{i}$ and by possibly modifying the multigraph $G_i$ in ways that do not decrease the product of its edge multiplicities $P(G_i)$. Thus in particular we have
\begin{align}\label{eq: upper bound on P(G_n)}
P(G_n)\leq \Pi_{2,1}(a,n_0)\prod_{i=n_0+1}^{n}  a^{\frac{4i-10}{5}}(a+1)^{\frac{i+5}{5}}=\Pi_{2,1}(a,n_0)\prod_{i=n_0}^{n-1}  a^{\frac{4i-6}{5}}(a+1)^{\frac{i+6}{5}}. 
\end{align}
If $n_0=n$, then we are done. On the other hand if $n_0<n$, then applying Corollary~\ref{corollary: ratio Pi_{2,1}(a, n+1)/Pi_{2,1}(a,n)} yields
\begin{align}\label{eq: lower bound on Pi_{2,1}(a,n)}
\Pi_{2,1}(a,n) &=  \Pi_{2,1}(a,n_0) \prod_{i=n_0}^{n-1} \left(\frac{\Pi_{2,1}(a,i+1)}{\Pi_{2,1}(a,i)}\right)\geq \Pi_{2,1}(a,n_0)\prod_{i=n_0}^{n-1} a^{(1-x_{2 \star})i}(a+1)^{x_{2\star}i}.
	\end{align}
Combining \eqref{eq: upper bound on P(G_n)} and~\eqref{eq: lower bound on Pi_{2,1}(a,n)} , we get
\begin{align*}
\frac{\Pi_{2,1}(a,n)}{P(G_n)} &\geq \prod_{i=n_0}^{n-1}\left(\frac{a+1}{a}\right)^{\frac{(5x_{2\star}-1)i -6}{5}}=  \left(\frac{a+1}{a}\right)^{\frac{(n-n_0)}{5}\left((n+n_0-1)\frac{(5x_{2\star}-1)}{2}-6\right)}.
\end{align*}		
Provided $(n+n_0-1)\left(5x_{2\star}-1\right)\geq 12$, the above is greater or equal to $1$ and the theorem claim follows.  

Now by construction $n_0\geq 6$, and as shown in Proposition~\ref{prop: a-monotonicity of xstar}, $x_{2\star}=x_{2\star}(a,1)$ is an increasing function of $a$. 
Thus for $n$ satisfying 
\begin{align}\label{eq: n bound in MT conjecture}
n \geq \frac{12}{\left(5x_{2\star}(2,1)-1\right)}-5= \frac{12\log(9/2)}{\log(27/16)}-5=29.49\ldots \notag\end{align}
and all $a\in \mathbb{Z}_{\geq 2}$, we  have $\Pi_{2,1}(a,n)\geq P(G_n)$ for all $G_n \in \mathcal{F}(n, 4, \binom{4}{2}a+3)$, as claimed.		
	\end{proof}
	
\begin{remark}
One can lower the bound on $n$ in Theorem~\ref{theorem: mubayi Terry conjecture}
for $a\geq 3$. For example, the argument in the proof allows us to take $n\geq 21$ if $a=3$.
Furthermore, we  believe $\mathrm{ex}_{\Pi}(n, 4, \binom{4}{2}a+3)=\Pi_{2,1}(a,n)$ should hold for all $n \geq 6$. Our proof shows that this could only fail if there exists a multigraph $G$ on $n$ vertices for some  $ n \in \{7,\dots, 18\}$  with $P(G)> \Pi_{2,1}(a, n)$.  Indeed, for $n_0\geq 18$, $(n+n_0-1)(5x_{2\star}-1)\geq 12$ is satisfied for all $n\geq n_0$ and $a \in \mathbb{Z}_{\geq 2}$.
\end{remark}

\section{Beyond the Mubayi--Terry conjecture: proof of 
Theorems~\ref{theorem: (5,5)}--\ref{theorem: (7,9)}}
\subsection{Proof of Theorem~\ref{theorem: (5,5)}}
For the lower bound in Theorem~\ref{theorem: (5,5)}, $\mathrm{ex}_{\Pi}(n, 5, \binom{5}{2}a +5) \geq \Pi_{2,1}(a,n)$, it is enough to observe that $\binom{5}{2}a+5=\Sigma_{2,1}(a,5)$. For the upper bound, we continue to use our degree-removal strategy from the proof of Theorem~\ref{theorem: mubayi Terry conjecture}.
\begin{lemma}[Heavy $4$-sets give low product-degree vertices]\label{lemma: heavy 4-sets have low-deg vces}
	Let $G\in \mathcal{F}(n+1, 5, \binom{5}{2}a+5)$, with $n\geq 7$. Then either every $4$-set $S\subseteq V(G)$ satisfies $e(G[S])\leq \binom{4}{2}a+3$ or $G$ contains a vertex of product-degree at most $(a+1)^{\frac{n+5}{4}}a^{\frac{3n-5}{4}}$.	
\end{lemma}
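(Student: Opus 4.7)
The strategy is the same as in the proofs of Lemma~\ref{lemma: heavy triangles give low-deg vces} (heavy triangles) and Lemma~\ref{lemma: heavyedges give low-deg vces} (heavy edges): suppose for a contradiction that there is a $4$-set $S\subseteq V(G)$ with $e(G[S])\geq 6a+4$, write $e(G[S])=6a+4+y$ for some non-negative integer $y$, and show by averaging over the four vertices of $S$ that one of them has small product-degree. Specifically, I would bound the product $\prod_{u\in S} p_G(u)$ from above and then take a geometric mean.

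For the key bound, apply the $5$-set constraint to $S\cup\{v\}$ for each $v\in V(G)\setminus S$: this yields $e(G[S])+d_S(v)\leq \binom{5}{2}a+5=10a+5$, and hence $d_S(v)\leq 4a+1-y$. Now observe that
\[
\prod_{u\in S} p_G(u) \;=\; P(G[S])^2\prod_{v\notin S}\prod_{u\in S}w(uv),
\]
which is a product of $2\binom{4}{2}+4(n-3) = 4n$ non-negative integer weights whose total sum is $2e(G[S])+\sum_{v\notin S}d_S(v)$. Substituting the bounds gives
\[
2(6a+4+y)+(n-3)(4a+1-y) \;=\; 4an+n+5+(5-n)y.
\]
Since $n\geq 7$, the coefficient $(5-n)$ is negative, so this sum is largest when $y=0$, in which case it is at most $4an+n+5$.

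The final step is to apply the integral AM--GM inequality (Proposition~\ref{prop: integral AM-GM}(i)) to these $4n$ non-negative integer weights. Since $4an+n+5=a\cdot(4n)+(n+5)$ and $n+5\leq 4n$ for $n\geq 7$, the proposition yields
\[
\prod_{u\in S}p_G(u)\;\leq\; a^{3n-5}(a+1)^{n+5}.
\]
Taking the fourth root, there must exist some $u^{\star}\in S$ with $p_G(u^{\star})\leq a^{(3n-5)/4}(a+1)^{(n+5)/4}$, which is the claimed bound.

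There is no serious obstacle here: the proof is essentially a clean accounting exercise, and the main thing to get right is the parametrisation. The key point is that the natural sum bound $4an+n+5$ matches exactly the total weight of the `template' multigraph in which $n+5$ of the $4n$ contributing weights are $(a+1)$ and the remaining $3n-5$ are $a$, so that the integer AM--GM inequality returns precisely $(a+1)^{n+5}a^{3n-5}$. This matching is also what forces the hypothesis $n\geq 7$, since we simultaneously need $(5-n)y\leq 0$ and $n+5\leq 4n$, and it is the reason the asymmetric exponents $(n+5)/4$ and $(3n-5)/4$ appear in the statement of the lemma.
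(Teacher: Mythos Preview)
Your proof is correct and follows the same overall strategy as the paper---bound $\prod_{u\in S}p_G(u)$ by an AM--GM argument and then average over $S$---but your execution is cleaner. The paper splits into two subcases: when $e(G[S])=6a+4$ it applies the integral AM--GM inequality separately to $P(G[S])$ and to each cross-product $\prod_{u\in S}w(uv)$, while for $e(G[S])\geq 6a+5$ it uses the ordinary (real) AM--GM and a calculus check to show the resulting bound $(a+\tfrac{5+x}{6})^{12}(a-\tfrac{x}{4})^{4(n-3)}$ is decreasing in the excess $x$. You instead apply the integral AM--GM inequality once to the full list of $4n$ weights, and your parametrisation via $y$ together with the sign of the coefficient $(5-n)$ disposes of both subcases simultaneously. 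This sidesteps the calculus step entirely; in fact your argument already goes through for $n\geq 5$, slightly better than the $n\geq 7$ the paper's two-case split needs.
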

\begin{proof}
	Suppose $G$ contains a $4$-set $S$ where $e(G[S])>\binom{4}{2}a+3$. We consider two subcases.

	If $e(G[S])=\binom{4}{2}a+4$, then, as $G\in \mathcal{F}(n+1, 5, \binom{5}{2}a+5)$, every vertex $v\in V(G)\setminus S$ sends at most $4a+1$ edges (counting multiplicities) into $S$. By the integral AM-GM inequality (Proposition~\ref{prop: integral AM-GM}(i)) it follows that the product of the product-degrees of the vertices of $S$  is at most 
	\[(a+1)^8a^4 (a+1)^{n-3}a^{3(n-3)}=(a+1)^{n+5}a^{3n-5}.\]
	By averaging, one of the vertices in $S$ must thus have product-degree at most $(a+1)^{\frac{n+5}{4}}a^{\frac{3n-5}{4}}$ in $G$, as desired.

	On the other hand, suppose $e(G[S])=\binom{4}{2}a+5+x$, for some non-negative integer $x$. Then every vertex $v\in V(G)\setminus S$ sends at most $4a-x$ edges (counting multiplicities) into $S$. Applying the AM-GM inequality, the product of the product-degrees of the vertices in $S$ is at most 
	\[ \left (a+\frac{5+x}{6} \right )^{12}\left (a-\frac{x}{4} \right )^{4(n-3)}. \]
	 Simple calculus shows that for $n\geq 6$, this expression is strictly decreasing in $x\geq 0$. By averaging, we deduce that $S$ contains some vertex with product-degree at most $(a+\frac{5}{6})^3a^{n-3}$, which for $n\geq 7$ is strictly less than $(a+1)^{\frac{n+5}{4}}a^{\frac{3n-5}{4}}$. 
\end{proof}

Consider the element $G \in \mathcal T_{2,1}(a,7)$ with canonical partition such that
$|V_0|=2$ and $|V_1|=5$; then $P(G)= (a-1) (a+1)^{10}a^{10}$ and so
$\Pi _{2,1}(a,7) \geq (a-1)(a+1)^{10}a^{10}$.

Next consider any $G \in \mathcal F (7,5, \binom{5}{2}+5)$.
By averaging over all $5$-sets, we see that 
$$e(G)\leq \left \lfloor \frac{\binom{7}{5} }{\binom{5}{3}} \left ( \binom{5}{2}a+5 \right ) \right \rfloor = \binom{7}{2} a+10.$$
The integral AM-GM inequality (Proposition~\ref{prop: integral AM-GM}(i)) then gives $P(G)\leq a^{11}(a+1)^{10}$.

Together, the above implies that 
\begin{align}\label{compare}
\mathrm{ex}_{\Pi}(7, 5, \binom{5}{2}a+5) \leq a^{11}(a+1)^{10} \leq \frac{a}{a-1} 
\Pi_{2,1}(a,7).
\end{align}

We are now in a position to prove Theorem~\ref{theorem: (5,5)}. 

\begin{proof}[Proof of Theorem~\ref{theorem: (5,5)}]
Let $n\geq 124$, and let $G=G_{n}$ be a multigraph in $\mathcal{F}(n, 5, \binom{5}{2}a+5)$. We produce a sequence of multigraphs as follows.

As long as $G_{i} \notin \mathcal{F}(i,4, \binom{4}{2}a+3)$ and $i\geq 8$, we remove from $G_i$ a vertex of product-degree at most $(a+1)^{\frac{i+4}{4}}a^{\frac{3i-8}{4}}$ (which exists by Lemma~\ref{lemma: heavy 4-sets have low-deg vces}) to obtain $G_{i-1}$. Let $G_t$ be the last multigraph obtained in this process.

If $t=7$,  we observe that 
\begin{align}\label{eq: bound on P(G_n)}
P(G_n)&\leq \left(\prod_{i=8}^{n}(a+1)^{\frac{i+4}{4}}a^{\frac{3i-8}{4}}\right) \mathrm{ex}_{\Pi}(7, 5, \binom{5}{2}a+5) 
	\stackrel{(\ref{compare})}{\leq }  \left(\prod_{i=7}^{n-1}(a+1)^{\frac{i+5}{4}}a^{\frac{3i-5}{4}}\right) 
	\frac{a}{a-1} \Pi_{2,1}(a,7).
\end{align}
Combining this with~\eqref{eq: lower bound on Pi_{2,1}(a,n)} (where we take $n_0=7$), we get
\begin{align}\label{eq: upper bound on ratio (5,5) case i=5}
\frac{\Pi_{2,1}(a,n)}{P(G_n)}&\geq \frac{a-1}{a} \prod_{i=7}^{n-1}\left(\frac{a+1}{a}\right)^{\frac{(4x_{2\star}-1)i -5}{4}}
= \frac{(a-1)}{a}\left(\frac{a+1}{a}\right)^{\frac{(n-7)}{8}\left((4x_{2\star}-1)(n+6)-10\right)}.
\end{align}
Notice that if 
$\frac{(n-7)}{8}\left((4x_{2\star}-1)(n+6)-10\right)\geq 2$ then the right hand side of (\ref{eq: upper bound on ratio (5,5) case i=5}) is at least $1$ (since $(a-1)(a+1)^2>a^3$ for $a\geq 2$).

On the other hand if $t>7$ then $G_t\in \mathcal{F}(t,4, \binom{4}{2}a+3)$.
This allows us to follow the same argument as in the proof of \eqref{eq: upper bound on P(G_n)}. That is, there is some $n_0 \in \mathbb N$ where $6\leq n_0 \leq t$ so that
\begin{align}\label{eq10}
P(G_t)\leq \Pi_{2,1}(a,n_0)\prod_{i=n_0}^{t-1}  a^{\frac{4i-6}{5}}(a+1)^{\frac{i+6}{5}}. 
\end{align}
Thus,
\begin{align}\label{eq: bound on P(G_n) (5,5)}
P(G_n)&\leq \left(\prod_{i=t+1}^{n}(a+1)^{\frac{i+4}{4}}a^{\frac{3i-8}{4}}\right) P(G_t)\notag \\
&\stackrel{(\ref{eq10})}{\leq}   \left(\prod_{i=t}^{n-1}(a+1)^{\frac{i+5}{4}}a^{\frac{3i-5}{4}}\right) \left(\prod_{i=n_0}^{t-1}a^{\frac{4i-6}{5}}(a+1)^{\frac{i+6}{5}} \right)\Pi_{2,1}(a,n_0).
\end{align}
 Combining this with ~\eqref{eq: lower bound on Pi_{2,1}(a,n)}, and using the fact that for all $i\geq 0$
\[\frac{(5x_{2\star}-1)i-6}{5}>\frac{(4x_{2\star}-1)i-5}{4},\]
we get 
\begin{align}\label{eq: upper bound on ratio (5,5) case (4,3)}
\frac{\Pi_{2,1}(a,n)}{P(G_n)}&\geq \left(\prod_{i=t}^{n-1}\left(\frac{a+1}{a}\right)^{\frac{(4x_{2\star}-1)i -5}{4}}
\right) \left( \prod_{i=n_0}^{t-1}\left(\frac{a+1}{a}\right)^{\frac{(5x_{2\star}-1)i -6}{5}} \right) \geq  \left(\prod_{i=n_0}^{n-1}\left(\frac{a+1}{a}\right)^{\frac{(4x_{2\star}-1)i -5}{4}}
\right)\notag \\
& = \left(\frac{a+1}{a}\right)^{\frac{(n-n_0)}{8}\left((n+n_0-1)(4x_{2\star}-1) -10 \right)}.
\end{align}
Therefore \eqref{eq: upper bound on ratio (5,5) case i=5} and \eqref{eq: upper bound on ratio (5,5) case (4,3)} imply that if 
\begin{align*}
 \frac{(n-7)}{8}\Bigl((4x_{2\star}-1)(n+6)-10\Bigr)\geq 2 && \textrm{ and } && (n+n_0-1)(4x_{2\star}-1) -10 \geq 0,
\end{align*}
then $\Pi_{2,1}(a,n)/P(G_n)\geq 1$. Recall that $x_{2\star}$ is  increasing in $a$ by Proposition~\ref{prop: a-monotonicity of xstar}, so that $x_{2\star}=x_{2\star}(a,1)\geq x_{2\star}(2,1)= \log (3/2)/\log (9/2)$. Since $n_0\geq 6$, we deduce that for
\[n \geq \Bigl\lceil\max\left(123,  \frac{10+16/116}{4x_{2\star}(2,1)-1}-6, \frac{10}{4x_{2\star}(2,1)-1}-5  \right)\Bigr\rceil=124,\]
$\mathrm{ex}_{\Pi}(n, 5,\binom{5}{2}a+5)=\Pi_{2,1}(a,n)$ as claimed.
\end{proof}

\subsection{Proof of Theorems~\ref{theorem: (6,7)} and~\ref{theorem: (7,9)}}

\begin{proof}[Proof of Theorem~\ref{theorem: (6,7)}]
	This is very similar to the proof of Theorem~\ref{theorem: (5,5)}. For the lower bound, $\mathrm{ex}_{\Pi}(n, 6, \binom{6}{2}a +7) \geq \Pi_{2,1}(a,n)$, it is enough to observe that $\binom{6}{2}a+7=\Sigma_{2,1}(a,6)$. For the upper bound, we continue to use our degree-removal strategy.
\begin{lemma}[Heavy $5$-sets give low product-degree vertices]\label{lemma: heavy 5-sets have low-deg vces}
	Let $G\in \mathcal{F}(n+1, 6, \binom{6}{2}a+7)$, with $n\geq 12$. Then either every $5$-set in $G$ has edge-sum at most $\binom{5}{2}a+5$ or $G$ contains a vertex of product-degree at most $(a+1)^{\frac{n+8}{5}}a^{\frac{4n-8}{5}}$.	
\end{lemma}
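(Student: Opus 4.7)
The plan is to mirror the proof of Lemma~\ref{lemma: heavy 4-sets have low-deg vces}: assuming $G$ contains a $5$-set $S$ with $e(G[S]) > \binom{5}{2}a+5 = \Sigma_{2,1}(a,5)$, I would bound the product of the product-degrees of vertices in $S$ from above, and then average over $u \in S$ to extract a single vertex of low product-degree. The argument splits into two subcases depending on whether $e(G[S])$ exceeds $\Sigma_{2,1}(a,5)$ by exactly one, or by more.

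For the borderline case $e(G[S]) = \binom{5}{2}a + 6$, the key observation is that since no $6$-set spans more than $\binom{6}{2}a+7$ edges and $\binom{6}{2} - \binom{5}{2} = 5$, each vertex $v \in V(G) \setminus S$ sends at most $5a+1$ edges (with multiplicities) into $S$. Applying the integral AM--GM inequality (Proposition~\ref{prop: integral AM-GM}(i)) to the $10$ edges inside $S$ and to the $5$ edges from each outside vertex into $S$, I obtain $P(G[S]) \leq a^4(a+1)^6$ and $\prod_{u \in S}w(uv) \leq a^4(a+1)$ for each $v \notin S$. Combining these using $\prod_{u \in S} p_G(u) = P(G[S])^2 \prod_{v \notin S} \prod_{u \in S} w(uv)$, I conclude
\[\prod_{u \in S} p_G(u) \leq a^{8}(a+1)^{12} \cdot a^{4(n-4)}(a+1)^{n-4} = a^{4n-8}(a+1)^{n+8},\]
and averaging over $u \in S$ yields a vertex with product-degree at most $a^{(4n-8)/5}(a+1)^{(n+8)/5}$, as required.

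For the remaining case $e(G[S]) = \binom{5}{2}a + 7 + x$ with $x \geq 0$ an integer, each outside vertex sends at most $5a-x$ edges into $S$, and the real-valued AM--GM inequality gives
\[\prod_{u \in S} p_G(u) \leq \Bigl(a+\tfrac{7+x}{10}\Bigr)^{20}\Bigl(a-\tfrac{x}{5}\Bigr)^{5(n-4)}.\]
Differentiating the logarithm of $\bigl(a+(7+x)/10\bigr)^{4}\bigl(a-x/5\bigr)^{n-4}$ in $x$ shows that for $n \geq 12$ this quantity is monotone decreasing in $x \geq 0$: the monotonicity condition reduces to $2(a-x/5) \leq (n-4)(a+(7+x)/10)$, which already holds comfortably at $x=0$ (giving $2a \leq 8a+5.6$) and is preserved as $x$ grows. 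Hence the worst case is $x=0$, yielding a vertex in $S$ with product-degree at most $(a+\tfrac{7}{10})^4 a^{n-4}$. It then remains to check that $(a+\tfrac{7}{10})^4 a^{n-4} \leq (a+1)^{(n+8)/5} a^{(4n-8)/5}$ for $n \geq 12$; after raising to the $20$th power this reduces to $\bigl((a+\tfrac{7}{10})/(a+1)\bigr)^{20} \leq \bigl((a+1)/a\bigr)^{n-12}$, whose left-hand side is strictly less than $1$ while the right-hand side is at least $1$.

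There is no deep obstacle here: once the right averaging is set up, the whole argument is mechanical. The main care required is in the calculus check establishing monotonicity in $x$ and in confirming that the threshold $n \geq 12$ in the statement is precisely what makes the final comparison in Case~2 go through; both turn out to have a reasonable amount of slack, but need to be verified.
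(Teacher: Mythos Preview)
Your proof is correct and follows exactly the approach the paper intends: it is the natural adaptation of the proof of Lemma~\ref{lemma: heavy 4-sets have low-deg vces}, which is precisely what the paper's own proof says (``Easy modification of the proof of Lemma~\ref{lemma: heavy 4-sets have low-deg vces}''). The only slip is that in the final comparison you should say ``raising to the $5$th power'' rather than the $20$th (the exponent $20$ on $(a+\tfrac{7}{10})/(a+1)$ arises as $4\cdot 5$); the resulting inequality and its verification are correct.
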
	
\begin{proof}
Easy modification of the proof of Lemma~\ref{lemma: heavy 4-sets have low-deg vces}.
\end{proof}

One can then follow a cruder version of the degree-removal strategy from the proof of Theorem~\ref{theorem: (5,5)} to obtain Theorem~\ref{theorem: (6,7)}. By averaging over all $6$-sets, it is easy to see that for any $G_{124}\in \mathcal{F}(124, 6, \binom{6}{2}a+7)$ the average edge multiplicity is at most $a+1$, whence by the AM-GM inequality we have 
\begin{align}\label{corollsec: bound on P(G_{124})}
P(G_{124})\leq \left(\frac{a+1}{a}\right)^{\binom{124}{2}}\Pi_{2,1}(a, 124).
\end{align}

Now, let $n\geq 503$ and let $G=G_n \in \mathcal{F}(n, 6, \binom{6}{2}a+7)$. As in the proof of Theorem~\ref{theorem: (5,5)}, sequentially remove vertices of minimum product-degree to obtain a sequence of multigraphs $G_n, G_{n-1}, G_{n-2}, \ldots$. Let $n'$ be the greatest integer $n'\leq n$ such that either $n'= 124$ or $G_{n'}\in \mathcal{F}(n',5, \binom{5}{2}a+5)$. By Lemma~\ref{lemma: heavy 5-sets have low-deg vces} and Corollary~\ref{corollary: ratio Pi_{2,1}(a, n+1)/Pi_{2,1}(a,n)}, we have
\begin{align}\label{corollsec eq: bound on P(Gn)}
P(G_n)&\leq P(G_{n'})\prod_{i=n'}^{n-1}(a+1)^{\frac{i+8}{5}}a^{\frac{4i-8}{5}}
\leq \Pi_{2,1}(a,n) \frac{P(G_{n'})}{\Pi_{2,1}(a,n')} \prod_{i=n'}^{n-1} \left(\frac{a+1}{a}\right)^{\frac{8 -(5x_{2\star}-1)i}{5}}\\ \notag
&= \Pi_{2,1}(a,n) \frac{P(G_{n'})}{\Pi_{2,1}(a,n')}\left(\frac{a+1}{a}\right)^{\left(\frac{n-n'}{5}\right)\left(8 -(5x_{2\star}-1)\left(\frac{n+n'-1}{2}\right)\right)}.
\end{align}
Now by the monotonicity in $a$ established in Proposition~\ref{prop: a-monotonicity of xstar}, $x_{2\star}=x_{2\star}(a,1)\geq \log(3/2)/\log (9/2)$. For $n \geq 503$ and $n'\geq 124$ we have
\begin{align}\label{corollsec eq: bound on xstar expression}
(5x_{2\star}-1)\left(\frac{n+n'-1}{2}\right)-8> 0 &&\textrm{ and}&& \frac{(n-124)}{5}\left((5x_{2\star}-1)\left(\frac{n+124-1}{2}\right)-8\right)> \binom{124}{2} .
\end{align}
If  $G_{n'}\in \mathcal{F}(n', \binom{5}{2}a +5)$, then as $n'\geq 124$,  Theorem~\ref{theorem: (5,5)} implies that $P(G_{n'})\leq \Pi_{2,1}(a,n')$ and \eqref{corollsec eq: bound on P(Gn)} together with the first of the bounds in~\eqref{corollsec eq: bound on xstar expression} implies $P(G_n)\leq \Pi_{2,1}(a,n)$. If on the other hand $n'=124$ then \eqref{corollsec eq: bound on P(Gn)} together with the second of the bounds in~\eqref{corollsec eq: bound on xstar expression} implies
\[P(G_n)\leq  \Pi_{2,1}(a,n) \frac{P(G_{124})}{\Pi_{2,1}(a,124)}\left(\frac{a+1}{a}\right)^{-\binom{124}{2}}\leq \
\Pi_{2,1}(a,n),\]
where the last inequality is from \eqref{corollsec: bound on P(G_{124})}. Thus in either case $P(G_n)\leq \Pi_{2,1}(a,n)$, as claimed.
\end{proof}

\begin{proof}[Proof of Theorem~\ref{theorem: (7,9)}]
This proof follows exactly that of Theorem~\ref{theorem: (6,7)}, mutatis mutandis. The analogue of Lemma~\ref{lemma: heavy 5-sets have low-deg vces} is the following.
\begin{lemma}[Heavy $6$-sets give low product-degree vertices]\label{lemma: heavy 6-sets have low-deg vces}
	Let $G\in \mathcal{F}(n+1, 7, \binom{7}{2}a+9)$. Then for all $n$ sufficiently large, either every $6$-set in $G$ has edge-sum at most $\binom{6}{2}a+7$ or $G$ contains a vertex of product-degree at most $(a+1)^{\frac{n+11}{6}}a^{\frac{5n-11}{6}}$.	
\end{lemma}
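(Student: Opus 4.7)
The plan is to mimic the proof of Lemma~\ref{lemma: heavy 4-sets have low-deg vces}, with the arithmetic adjusted for 6-sets inside a multigraph $G \in \mathcal{F}(n+1, 7, \binom{7}{2}a+9)$. Suppose $G$ contains a 6-set $S$ with $e(G[S]) > \binom{6}{2}a+7$, and split into two subcases according to the value of $e(G[S])$.

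In the first subcase, $e(G[S]) = \binom{6}{2}a + 8$. The hypothesis on $G$ forces every $v \notin S$ to satisfy $\sum_{u \in S} w(uv) \leq 6a + 1$. I will apply the integral AM--GM inequality (Proposition~\ref{prop: integral AM-GM}(i)) twice: internally it gives $P(G[S]) \leq a^{7}(a+1)^{8}$, and externally it gives $\prod_{u \in S} w(uv) \leq a^{5}(a+1)$ for each $v \notin S$. Combining these bounds yields
\[
\prod_{v \in S} p_G(v) = P(G[S])^2 \prod_{v \notin S}\prod_{u \in S} w(uv) \leq a^{14}(a+1)^{16} \cdot a^{5(n-5)}(a+1)^{n-5} = a^{5n-11}(a+1)^{n+11},
\]
and averaging over $v \in S$ produces the required vertex of product-degree at most $a^{(5n-11)/6}(a+1)^{(n+11)/6}$.

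In the second subcase, $e(G[S]) = \binom{6}{2}a + 9 + x$ for some non-negative integer $x$, and the hypothesis on $G$ forces $\sum_{u \in S} w(uv) \leq 6a - x$ for every $v \notin S$. The continuous AM--GM inequality then gives
\[
\prod_{v \in S} p_G(v) \leq \left(a + \frac{9+x}{15}\right)^{\!30} \left(a - \frac{x}{6}\right)^{\!6(n-5)}.
\]
Differentiating the logarithm of this expression in $x$ shows it is strictly decreasing in $x \geq 0$ once $n \geq 7$, so the worst case is $x=0$, yielding a vertex in $S$ of product-degree at most $(a + 3/5)^{5}\, a^{n-5}$.

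The main step, and the source of the ``for all $n$ sufficiently large'' caveat in the statement, will be verifying that $(a+3/5)^{5} a^{n-5}$ is at most the target bound $a^{(5n-11)/6}(a+1)^{(n+11)/6}$. After taking logarithms this reduces to the inequality
\[
30\log(a + 3/5) - 11\log(a+1) - 19\log a \leq n \log\!\left(\tfrac{a+1}{a}\right),
\]
whose left-hand side stays bounded (in fact tends to $0$) as $a \to \infty$, while the right-hand side grows linearly in $n$. A direct numerical check at $a = 2$, where the left-hand side is largest, will then show that a uniform threshold $N_{0}$ suffices for all $a \geq 2$. Once this lemma is in hand, the remainder of the proof of Theorem~\ref{theorem: (7,9)} will follow the template of Theorem~\ref{theorem: (6,7)}, with this lemma replacing Lemma~\ref{lemma: heavy 5-sets have low-deg vces} and an appropriate analogue of the base-case bound~\eqref{corollsec: bound on P(G_{124})} taking its place.
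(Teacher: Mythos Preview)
Your proof is correct and follows exactly the approach the paper intends: the paper's own proof simply reads ``Easy modification of the proof of Lemma~\ref{lemma: heavy 4-sets have low-deg vces}'', and you have carried out precisely that modification with the right arithmetic. One small imprecision: in your final paragraph, noting that the left-hand side is largest at $a=2$ is not by itself enough to conclude uniformity, since the coefficient $\log\bigl(\tfrac{a+1}{a}\bigr)$ on the right-hand side also shrinks as $a$ grows; what you actually need (and what is easily checked) is that the ratio $\bigl(30\log(a+3/5)-11\log(a+1)-19\log a\bigr)\big/\log\bigl(\tfrac{a+1}{a}\bigr)$ is bounded over $a\geq 2$, which follows since it is continuous and tends to $7$ as $a\to\infty$.
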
	
The analogue of the key leftmost inequality in~\eqref{corollsec eq: bound on xstar expression} is
\[ \frac{6x_{2\star} -1}{2}\left(n+n'-1\right)-11>0.\]
We leave the details to the reader.	
\end{proof}
\begin{remark}\label{remark: 8 no good}
	At this point, the reader may well ask themselves why we do not keep going in this fashion and establish that $\mathrm{ex}_{\Pi}(n, 8, \Sigma_{2,1}(a,8))=\Pi_{2,1}(a,n)$ for all $a\geq 2$ and all $n$ sufficiently large. The problem here is that for $a=2$, the analogue of the leftmost inequality in~\eqref{corollsec eq: bound on xstar expression} fails:  the analogue of Lemma~\ref{lemma: heavy 6-sets have low-deg vces} gives a vertex with product-degree at most
	\[ (a+1)^{\frac{2n+8}{7}}a^{\frac{5n-8}{7}}.\]
	Following the proof of Theorem~\ref{theorem: (6,7)}, one ends up considering the quantity $(7x_{2\star}-2)/7$, which for $a=2$ is strictly negative. Thus for the pair $(8, \Sigma_{2,1}(2, 8))=(8, 2\binom{8}{2}+12)$, a set of $7$ vertices spanning at least $2\binom{7}{2}+10$ edges  does not in itself guarantee the existence of a sufficiently low product-degree vertex in a multigraph from $\mathcal{F}(n, 8, 2\binom{8}{2}+12)$.
\end{remark}

\section{The sparse case: proof of Theorem~\ref{theorem: sparse case}}
\begin{proof}[Proof of Theorem~\ref{theorem: sparse case}]
Let $s,q$ be non-negative integers with $s\geq 2$ and $q\leq 2\binom{s}{2}$. Let $n\in \mathbb{Z}_{\geq s}$ and $G$ be a multigraph from $\mathcal{F}(n, s,q)$ with $P(G)=\mathrm{ex}_{\Pi}(n,s,q)$. We bound $P(G)$ for the various values of $q$, beginning with the easiest cases. 	\newline
\noindent\textbf{Case 1: $0\leq q<\binom{s}{2}$.} In any $s$-set in $G$, there is at least one edge with multiplicity $0$. Hence $P(G)=0$, as claimed.\newline
\noindent\textbf{Case 2: $q=\binom{s}{2}$.} By averaging over all $s$-sets, the average edge-multiplicity in $G$ is at most $1$. By the AM--GM inequality, it follows that $P(G)\leq 1^{\binom{n}{2}}=1$. The $n$-vertex multigraph where every edge has multiplicity $1$ shows that the equality is attained here.\newline
\noindent\textbf{Case 3: $\binom{s}{2}<q< \binom{s}{2}+\lfloor \frac{s}{2}\rfloor $.}  Let $q' := q -\binom{s}{2}$. As $G$ is extremal, we may assume all edges have multiplicity at least $1$.  Furthermore, as $q'< \frac{s}{2}$, there are at most $q'$ edges in $G$ with multiplicity greater than $1$ as otherwise we could find a set $X$ of $s$ vertices such that $e(G[X]) > q$.  Thus, by the integral AM-GM inequality, we have that $P(G) \leq 2^{q'}$.  This maximum can be easily  attained, for example by adding a matching of $q'$ edges to a complete graph on $n$ vertices. Therefore $\mathrm{ex}_{\Pi}(n,s,q)=2^{q'}$, as claimed.
\newline
\noindent\textbf{Case 4: $\binom{s}{2}+\lfloor \frac{s}{2}\rfloor \leq q<\binom{s}{2}+ s-2$.} 	Consider the multigraph $G'$ on $[n]$ in which all edges have multiplicity $1$ except for the edges of a maximal matching. Then $P(G')= 2^{\bigl\lfloor \frac{n}{2}\bigr\rfloor}$, and any $s$ vertices span at most $\lfloor \frac{s}{2}\rfloor $ edges of multiplicity at least $2$. This shows $\mathrm{ex}_{\Pi}(n,s,q)=2^{\Omega(n)}$ in this range. For the upper bound, note that we may assume that all edges of $G$ have multiplicity at least $1$. Furthermore, consider the subgraph $G^{\geq 2}$ of edges with multiplicity at least $2$. Since no $s$-set can span more than $s-3$ edges in this graph, it follows that every component of $G^{\geq 2}$ has order at most $s-2$; moreover in each such component $C$, we have that $e(G[C])\leq \binom{\vert C\vert }{2}+ s-3$. An easy optimisation thus shows that each component of $G^{\geq 2}$ of order $t\geq 2$ can contribute at most $2^{s-3}$ to $P(G)$. There are clearly at most $n/2$ such components, whence $P(G)\leq 2^{\frac{s-3}{2}n}=2^{O(n)}$. This shows that $\mathrm{ex}_{\Pi}(n,s,q)=2^{\Theta(n)}$ for this range of $q$, as claimed.
\newline	
\noindent\textbf{Case 5: $q=\binom{s}{2}+ s-2$.} This case was proved by Mubayi and Terry~\cite[Theorem 8(b)]{mt2}, so we omit the proof here.	
\newline
\noindent\textbf{Case 6: $q=\binom{s}{2}+ s-1$.} This is the case requiring most work. We begin by observing that if $H$ is a graph  on $n$ vertices with girth at least $s+1$, then we may define a multigraph $G_H$ on the same vertex set by letting the multiplicity of an edge $uv$ be equal to $2$ if $uv\in E(H)$ and otherwise let its multiplicity be $1$. Clearly, $P(G_H)= 2^{e(H)}$. Furthermore, every $s$-set of vertices in $H$ contains at most $s-1$ edges, by the lower bound on the girth of $H$, so every $s$-set of vertices in $G_H$ spans at most $\binom{s}{2}+s-1$ edges (counting multiplicities). Thus $G_H\in \mathcal{F}(n,s, \binom{s}{2}+s-1)$ and \[\mathrm{ex}_{\Pi}(n, s, \binom{s}{2}+s-1)\geq \max_{H: \ \mathrm{girth}(G)\geq s+1, \ v(H)=n}P(G_H) = 2^{\mathrm{ex}\left(n, \left\{C_3, C_4, \ldots, C_s\right\}\right)},\]
as required. 

We now turn our attention to the upper bound. We shall prove it by induction on $n$. 
For the base case $n=s$, the upper bound 
\[\mathrm{ex}_{\Pi}(s, s, \binom{s}{2}+s-1)\leq 2^{s-1}= 2^{\mathrm{ex}\left(s, \left\{C_3, C_4, \ldots, C_s\right\}\right)}\]
follows from the integral version of the AM-GM inequality, Proposition~\ref{prop: integral AM-GM}(i).

Suppose  we have proved  that $ \mathrm{ex}_{\Pi}(n, s, \binom{s}{2}+s-1)\leq 2^{\mathrm{ex}\left(n, \left\{C_3, C_4, \ldots, C_s\right\}\right)}$ for all $n$ such that  $s\leq n\leq N$, for some $N\geq s$.  Consider a product-extremal multigraph $G\in \mathcal{F}(N+1,s,\binom{s}{2}+s-1)$.   By the maximality of $P(G)$, we may assume that every edge in $G$ has multiplicity at least $1$.

If $G \in \bigcap_{2\leq s'\leq s}\mathcal{F}(N+1, s', \binom{s'}{2}+s'-1)$, then we have that every edge of $G$ has multiplicity either $1$ or $2$ (since $G\in \mathcal{F}(N+1, 2, 2)$), and that the graph induced by the edges with multiplicity exactly $2$ has girth at least $s+1$ (since $G\in \mathcal{F}(N+1, s', \binom{s'}{2}+s'-1)$ for all $s'$ so that $3\leq s'\leq s$). Thus $P(G)\leq 2^{\mathrm{ex}\left(N+1, \left\{C_3, C_4, \ldots C_s\right\}\right)}$, and we are done.

 We may  therefore assume that there exists  $s'\in \mathbb N$ where $2\leq s'\leq s-1$ such that $G\notin \mathcal{F}(N+1, s', \binom{s'}{2}+s'-1)$. Let $s_0$ be the largest such $s'$, and let $X$ be a set of $s_0$ vertices in $G$ spanning at least $\binom{s_0}{2}+s_0$ edges. By the maximality of $s_0$, it follows that $e(G[X])$ is exactly equal to $\binom{s_0}{2}+s_0$. For the same reason, all edges from $X$ to $V(G)\setminus X$ must have multiplicity $1$: if $y\in V(G)\setminus X$ sends an edge of multiplicity at least $2$ to $X$, then $X\cup\{y\}$ is an $(s_0+1)$-set spanning at least $\binom{s_0+1}{2}+s_0+1$ edges, contradicting the maximality of $s_0$. In particular we have
\begin{align}\label{sparse case eq: bound on P(G)}
P(G)=P(G[X])\cdot P(G[V(G)\setminus X]).
\end{align}
Further, the integral AM-GM inequality (Proposition~\ref{prop: integral AM-GM}(i)) implies that $P(G[X])\leq 2^{s_0}$. We now turn our attention to $G[V(G)\setminus X]$. Observe that for all integers $s\geq 2$, $t\in \mathbb{N}$ and $n\geq s+t$,
\begin{align}\label{eq: lb on growth rate of ex-girth>s}
\mathrm{ex}\left(n, \left\{C_3, C_4, \ldots, C_s\right\}\right)\geq t + \mathrm{ex}\left(n-t, \left\{C_3, C_4, \ldots, C_s\right\}\right).
\end{align}
This inequality follows from the fact that taking any extremal $ \left\{C_3, C_4, \ldots, C_s\right\}$-free graph on $n-t$ vertices, selecting an arbitrary vertex and adding $t$ pendant edges to it yields a $ \left\{C_3, C_4, \ldots, C_s\right\}$-free graph on $n$ vertices with $t+\mathrm{ex}\left(n-t, \left\{C_3, C_4, \ldots, C_s\right\}\right)$ edges.

If $\vert V(G)\setminus X\vert=N+1-s_0 \geq s$, then by \eqref{sparse case eq: bound on P(G)}, \eqref{eq: lb on growth rate of ex-girth>s}, our upper bound on $P(G[X])$ and our inductive hypothesis we have 
\begin{align*}
P(G)&=P(G[X])\cdot P(G[V(G)\setminus X])\leq 2^{s_0} 2^{\mathrm{ex}\left(N+1-s_0, \left\{C_3,C_4, \ldots C_s\right\}\right)}\leq 2^{\mathrm{ex}\left(N+1, \left\{C_3,C_4, \ldots, C_s\right\}\right)},
\end{align*}
as desired.  We may thus assume $\vert V(G)\setminus X\vert=N+1-s_0 =s-s_1$ for some $s_1\geq 1$. 

We will show that  $e(G[V(G)\setminus X])\leq \binom{s-s_1}{2}+s-s_1+1$.  Suppose this is not the case. Then the following holds:
\begin{claim}\label{claim: inside  case 6 of sparse theorem}
	For every $i\in \mathbb N$ with $2\leq i \leq  s_0$, there is an $i$-set $X_i\subseteq X$ such that $X_i$ spans at least $\binom{i}{2}+i-1$ edges.
\end{claim}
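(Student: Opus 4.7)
The plan is to prove the claim by downward induction on $i$, starting from $i = s_0$ and at each stage deleting a vertex of minimum restricted degree. For the base case $i = s_0$, set $X_{s_0} := X$; then $e(G[X_{s_0}]) = \binom{s_0}{2} + s_0 \geq \binom{s_0}{2} + s_0 - 1$, as required.

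For the inductive step, suppose $i \geq 3$ and that we have located $X_i \subseteq X$ with $\vert X_i \vert = i$ and $m := e(G[X_i]) \geq \binom{i}{2} + i - 1$. For $v \in X_i$, write $d_{X_i}(v) := \sum_{u \in X_i \setminus \{v\}} w(uv)$, and let $v^\ast \in X_i$ minimise this quantity; set $X_{i-1} := X_i \setminus \{v^\ast\}$. From the identity $\sum_{v \in X_i} d_{X_i}(v) = 2m$ and the integrality of $d_{X_i}(v^\ast)$, we have $d_{X_i}(v^\ast) \leq \lfloor 2m/i \rfloor$. Writing $m = \binom{i}{2} + i - 1 + k$ for some integer $k \geq 0$, one computes $2m/i = i + 1 + 2(k-1)/i$, which combined with $i \geq 3$ yields $\lfloor 2m/i \rfloor \leq i + k$ (the case $k = 0$ uses the floor, since then $2m/i \in (i, i+1)$). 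It follows that
\[e(G[X_{i-1}]) = m - d_{X_i}(v^\ast) \geq m - (i+k) = \binom{i}{2} - 1 = \binom{i-1}{2} + (i-1) - 1,\]
using the identity $\binom{i}{2} - 1 = \binom{i-1}{2} + i - 2$. Iterating down to $i = 2$ produces the desired nested chain $X_{s_0} \supseteq X_{s_0-1} \supseteq \cdots \supseteq X_2$.

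I do not foresee any real obstacle here --- the argument is purely elementary, amounting to a minimum-degree-removal step combined with basic integer arithmetic. One observation worth recording is that the proof of the claim uses only the lower bound $e(G[X]) \geq \binom{s_0}{2} + s_0$ and does \emph{not} invoke the additional hypothesis $e(G[V(G) \setminus X]) > \binom{s-s_1}{2} + s - s_1 + 1$ stated just before it. That extra hypothesis will presumably be brought in at the next step of the proof, perhaps via a parallel chain of subsets of $V(G) \setminus X$, in order to assemble an $s$-set of $V(G)$ violating the membership $G \in \mathcal{F}(N+1, s, \binom{s}{2} + s - 1)$ and hence yielding the desired contradiction.
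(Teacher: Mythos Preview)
Your proof is correct and follows essentially the same approach as the paper's: downward induction on $i$, at each step removing a vertex of minimum degree inside $X_i$ and using averaging plus integrality to guarantee the required lower bound on $e(G[X_{i-1}])$. The only cosmetic difference is that the paper averages the quantity $e(G[X_i])-d_i(x)$ directly (summing to $(i-2)e(G[X_i])$), whereas you average the degrees $d_{X_i}(v)$ (summing to $2m$); these are of course equivalent. Your observation that the claim itself does not use the hypothesis $e(G[V(G)\setminus X])>\binom{s-s_1}{2}+s-s_1+1$ is accurate.
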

\begin{proof}
 Indeed, $X=X_{s_0}$ is such a set for $i=s_0$.  Suppose now that for some $i \geq 3$ we have an $i$-subset $X_i$ of $X$ spanning at least $\binom{i}{2}+i-1$ vertices.  Consider $\sum_{x \in X_{i}} \left (e(G[X_{i}]) - d_{i}(x) \right )$, where $d_{i}(x)$ is the degree of $x$ in $G[X_{i}]$.  We have that this sum is at least $\left(i-2\right)\left({i \choose 2} + \left(i-1\right) \right)$.  Thus, by averaging there exists a vertex $x_{i} \in X_{i}$ such that $e(G[X_{i}]) - d_{i}(x_{i}) \geq {i-1  \choose 2} + (i-1) - 2 + \frac{2}{i}$.  As the quantity $e(G[X_{i}]) - d_{i}(x)$ must be an integer, we have that removing $x_{i}$ from $X_{i}$ leaves us with an $(i-1)$-set $X_{i-1}$ such that $e(G[X_{i-1}]) \geqslant {i-1  \choose 2} + (i-1) - 1$, as required.	
\end{proof}
Now $s<N+1 = s_0 +s-s_1$. In particular it follows that $s_0 >s_1$. By Claim~\ref{claim: inside  case 6 of sparse theorem}, it follows that $X$ contains an $s_1$-subset $X_{s_1}$ spanning at least $\binom{s_1}{2}+s_1-1$ edges. Then the union $X_{s_1}\cup \left(V(G)\setminus X\right)$ is a set of precisely $s$ vertices spanning at least $\binom{s_1}{2}+s_1-1 +s_1(s-s_1)+ \binom{s-s_1}{2}+(s-s_1)+1= \binom{s}{2}+s$ edges, contradicting the fact that $G\in \mathcal{F}(N+1, s, \binom{s}{2}+s-1)$.

It must therefore be the case that $e(G[V(G)\setminus X])\leq \binom{s-s_1}{2}+s-s_1$. By the integral AM-GM inequality (Proposition~\ref{prop: integral AM-GM}(i)), it follows that 
\[P(G[V(G)\setminus X]) \leq 2^{s-s_1}.\]
Substituting in $s-s_1 = N+1-s_{0}$ and combining the bound above with \eqref{sparse case eq: bound on P(G)} and our earlier upper bound of $2^{s_0}$ on  $P(G[X])$, we get
\begin{align*}
P(G)&=P(G[X])\cdot P(G[V(G)\setminus X])\leq 2^{s_0}2^{N+1-s_0}=2^{e(C_{N+1})}\leq 2^{\mathrm{ex}\left(N+1, \left\{C_3, C_4, \ldots, C_s\right\}\right)},
\end{align*}
as desired. Thus we have proved our upper bound for $n=N+1$. The claim follows by induction on $n$.

\noindent\textbf{Case 7: $\binom{s}{2}+ s-1<q <\binom{s}{2}+\Bigl\lfloor\frac{s^2}{4}\Bigr\rfloor$.} 	By maximality, all edges in $G$ have multiplicity at least $1$. We consider the subgraph $G^{\geq 2}$ of edges of multiplicity at least $2$. 
Consider any $\varepsilon >0$. Suppose $G$ contains at least $\varepsilon n^2$ edges. Then provided $n$ is sufficiently large, the Erd{\H o}s--Stone theorem tells us that $G^{\geq 2}$ contains a complete balanced bipartite graph on $s$ vertices. The induced subgraph of $G$ corresponding to these $s$ vertices then spans at least $\binom{s}{2}+ \lfloor \frac{s^2}{4}\rfloor$ edges, a contradiction. Thus we must have have that $G^{\geq 2}$ contains $o(n^2)$ edges. Since all edges in $G$ can have multiplicity at most $\frac{s^2}{4}$ +1, it follows that $P(G)\leq (\frac{s^2}{4} +1)^{e(G^{\geq 2})}=2^{o(n^2)}$, as claimed.
\newline		
\noindent\textbf{Case 8: $\binom{s}{2}+\Bigl\lfloor\frac{s^2}{4}\Bigr\rfloor \leq q  \leq 2{\binom{s}{2}}$.} Any multigraph $G'$ from $T^{e}_{2,0}(1,n)$ satisfies $P(G')=2^{\lfloor \frac{n^2}{4}\rfloor}=2^{\Omega(n^2)}$ and has the property that every $s$-set of vertices spans at most $\binom{s}{2}+\Bigl\lfloor\frac{s^2}{4}\Bigr\rfloor$ edges (counting multiplicities). This provides us with the claimed lower bound on $\mathrm{ex}_{\Pi}(n,s,q)$. For the upper bound, by averaging over all $s$-sets we see that the average edge multiplicity in $G$ is at most $2$; so by the AM-GM inequality $P(G)\leq 2^{\binom{n}{2}}$. Thus $\mathrm{ex}_{\Pi}(n,s,q)=2^{\Theta(n^2)}$ for this range of $q$.

\end{proof}

\section{Iterated constructions}\label{construct}
As the reader may have realised, the quantity $\Sigma_{r,d}(a,s)$ often jumps up by more than $1$ when we decrease $d$ or increase $r$. Thus while Conjecture~\ref{conjecture: entropy densities} covers many cases of Problem~\ref{problem: Mubayi--Terry}, there are intervals of $(s,q)$ left uncovered. In this section, we provide a generalisation of Construction~\ref{construction: lower bound} that partly address the question of what may (conjecturally) happen to $\mathrm{ex}_{\Pi}(n,s,q)$ for $(s,q)$ in these intervals. The idea behind our generalised construction is quite simple: we take a graph $G$ from $\mathcal{T}_{r,d}(a,n)$, and `iterate' inside the special part of the canonical partition $V_0=V_0(G)$, by replacing $G[V_0]$ by a graph $G'$ from $\mathcal{T}_{r',d'}(a-d, \vert V_0\vert)$, for some suitable choice of $r',d'$. This will give a slight boost to the product of edge multiplicities inside $V_0$, and we may repeat the operation further by now replacing the special part $V_0'=V_0(G')$ in $G'$'s canonical partition by a new graph $G''$ from $\mathcal{T}_{r'',d''}(a-d-d',\vert V_0'\vert)$, and so on.  We give a formal definition of our construction and an example below.
\begin{construction}[Iterated generalised Tur\'an multigraphs]\label{iterated construction}
	An \emph{admissible pair} $(\mathbf{r}, \mathbf{a})$ is a pair of $k$-dimensional vectors for some $k\in \mathbb{N}$ with strictly positive integer entries  $\mathbf{r}=(r_1, r_2, \ldots, r_k)$ and $\mathbf{a}=(a_1, a_2, \ldots, a_k)$  satisfying 
	\begin{align*}
	a_1>a_2>\ldots > a_{k}.
	\end{align*}
	Given an admissible pair $(\mathbf{r}, \mathbf{a})$, let $\mathcal{T}_{\mathbf{r}}(\mathbf{a}, n)$ denote the collection of multigraphs $G$ on $[n]$ for which $[n]$ can be partitioned into $R:=\sum_{j=1}^{k}r_j$ parts $V_1, V_2,\ldots ,V_R$ such that the following hold:
	\begin{enumerate}[(i)]
		\item for all $\ell$ such that $\sum_{j=1}^{i-1}r_j < \ell \leq \sum_{j=1}^{i}r_j$, all edges internal to $V_{\ell}$  have multiplicity $a_i$;
		\item for all $\ell_1<\ell_2$ such that $\sum_{j=1}^{i-1} r_j<\ell_1 \leq  \sum_{j=1}^{i}r_j$, all edges from $V_{\ell_1}$ to $V_{\ell_2}$ have multiplicity $a_i+1$.
	\end{enumerate}
\end{construction}
\begin{remark}
	Observe that this construction does indeed generalise Construction~\ref{construction: lower bound}, with $\mathcal{T}_{r,d}(a,n)= \mathcal{T}_{(r-1,1)}\bigr((a,a-d),n\bigl)$. It also contains new and very much different constructions, as can be seen by considering  for example the admissible pair $(\mathbf{r}, \mathbf{a})=((1,2,1),(a, a-2, a-3))$ for $a\in \mathbb{Z}_{\geq 4}$ illustrated in Figure~\ref{figure: iterated example} below.
\end{remark}
\begin{figure}[ht]
	\centering
	\includegraphics[scale=0.8]{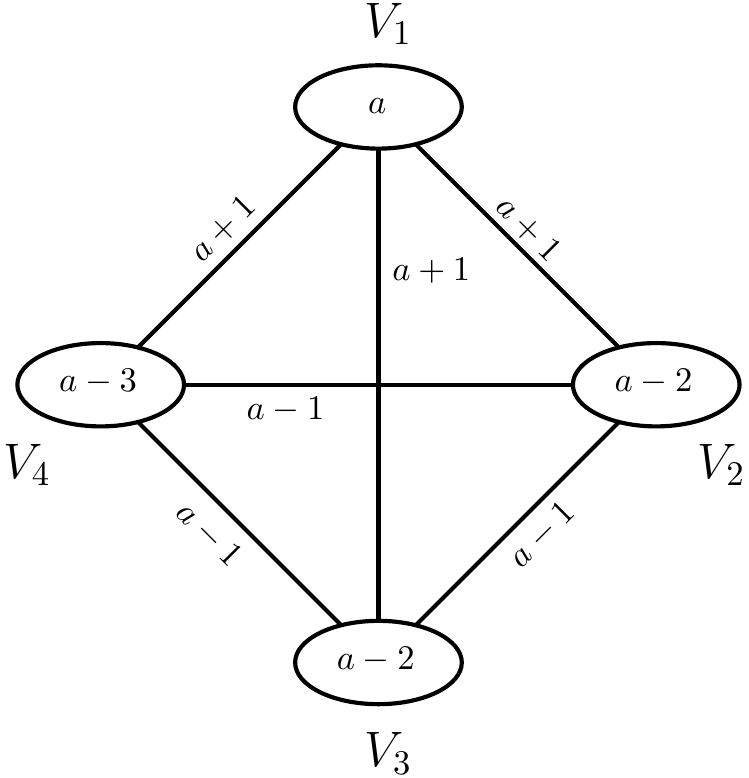}
	\caption{An example of the structure of multigraphs in $\mathcal {T}_{(1,2,1)}((a,a-2,a-3),n)$.}
	\label{figure: iterated example}
\end{figure}

Given an admissible pair $(\mathbf{r}, \mathbf{a})$, let $\Sigma_{\mathbf{r}}(\mathbf{a}, n)$ and $\Pi_{\mathbf{r}}(\mathbf{a},n)$ denote the maximum over all $G\in \mathcal{T}_{\mathbf{r}}(\mathbf{a}, n)$ of $e(G)$ and $P(G)$ respectively. Clearly, we have that for all $s$ and $q=\Sigma_{\mathbf{r}}(\mathbf{a}, s)$,
\begin{align}\label{eq: lower bound on extremal product}
\mathrm{ex}_{\Pi}(n, s, q)\geq \Pi_{\mathbf{r}}(\mathbf{a},n).
\end{align}
We say that an admissible pair $(\mathbf{r}, \mathbf{a})$ is \emph{$s$-dominant} if for all admissible pairs $(\mathbf{r'}, \mathbf{a'})$, either $\Sigma_{\mathbf{r'}}(\mathbf{a'},s)> \Sigma_{\mathbf{r}}(\mathbf{a},s)$ or $\Pi_{\mathbf{r'}}(\mathbf{a'},n)\leq \Pi_{\mathbf{r}}(\mathbf{a}, n)$ for all $n$ sufficiently large; in other words, a pair $(\mathbf{r}, \mathbf{a})$ is $s$-dominant if it gives us the best lower bound in~\eqref{eq: lower bound on extremal product} over all admissible pairs. So for example $((1,2), (a, a-1))$ is not $5$-dominant, because the pair $((2), (a))$ satisfies $\Sigma_{(2)}((a), 5)=\Sigma_{(1,2)}((a,a-1),5)=\binom{5}{2}a+6$ and $\Pi_{(2)}((a), n)\gg \Pi_{(1,2)}((a,a-1),n)$. If one believes Conjecture~\ref{conjecture: entropy densities}, it is plausible that the answer to the following question is affirmative.
\begin{question}\label{question: iterated construction}
 Is it the case that for all $s$-dominant admissible pairs $(\mathbf{r}, \mathbf{a})$,  there exists a natural number $n_0$ such that for all $n\geq n_0$, we have equality in~\eqref{eq: lower bound on extremal product}?
\end{question}

We now gather together some elementary facts about Construction~\ref{iterated construction} and the associated extremal quantity $\Pi_{\mathbf{r}}(\mathbf{a},n)$. Given an admissible pair $(\mathbf{r}, \mathbf{a})$ of $k$-dimensional vectors, set $R:=\sum_{i=1}^kr_i$.  We say that an $R$-dimensional vector $\mathbf{x}$ with $x_i\geq 0$ and $\sum_{i=1}^R x_i=1$ is a \emph{product-optimal weighting} (or \emph{p.o.w.}) if there exists a sequence of product-maximising multigraphs $G_n\in \mathcal{T}_{\mathbf{r}}(\mathbf{a}, n)$ such that the associated sequence of partitions $\left(\sqcup_{i=1}^R V_i\right)_n$ satisfies:
\begin{align*}
\lim_{n\rightarrow \infty}  \frac{\vert V_i\vert }{n}=x_i && \textrm{for all }i\in [R]. \end{align*}
We also let $\pi_{\mathbf{r}}(\mathbf{a})$ denote the entropy density of the family $\mathcal{T}_{\mathbf{r}}(\mathbf{a},n)$,\footnote{The sequence $\frac{\log \Pi_{\mathbf{r}}(\mathbf{a},n)}{\binom{n}{2}}$ is non-increasing by a classical averaging argument of  Katona, Nemetz and Simonovits~\cite{KatonaNemetzSimonovits}, and it lies within the interval $[\log a_1, \log (a_1+1))$; it thus converges to a limit as claimed, and the entropy density is thus well-defined.}
\[\pi_{\mathbf{r}}(\mathbf{a}) :=\lim_{n\rightarrow \infty}\frac{\log \Pi_{\mathbf{r}}(\mathbf{a},n)}{\binom{n}{2}}.\]

\begin{proposition}\label{prop: uniqueness of pow}
	Let $(\mathbf{r}, \mathbf{a})$ be an admissible pair. Then there exists a unique p.o.w.\ $\mathbf{x}$ for $(\mathbf{r}, \mathbf{a})$.
\end{proposition}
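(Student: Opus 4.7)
The plan is to reduce uniqueness of the p.o.w.\ to uniqueness of the maximizer of an explicit quadratic function on the simplex. For $G \in \mathcal{T}_{\mathbf{r}}(\mathbf{a}, n)$ with partition $\sqcup_{\ell=1}^{R}V_\ell$ (where $R := \sum_i r_i$) satisfying $|V_\ell|/n \to x_\ell$, a direct count gives
\begin{equation*}
\lim_{n \to \infty}\frac{\log P(G)}{\binom{n}{2}} = f(\mathbf{x}) := \sum_{\ell=1}^{R}(\log a_{c(\ell)})x_\ell^{2} + 2\!\sum_{\ell<\ell'}\!\log\bigl(a_{\min(c(\ell),c(\ell'))}+1\bigr) x_\ell x_{\ell'},
\end{equation*}
where $c(\ell) \in [k]$ denotes the class of $V_\ell$ and $c_i := \{\ell : c(\ell)=i\}$. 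A rounding construction shows $\pi_{\mathbf{r}}(\mathbf{a}) = \max_{\Delta_R} f$ on the simplex $\Delta_R$, so any product-optimal sequence $(G_n)$ satisfies $\log P(G_n)/\binom{n}{2} \to \pi_{\mathbf{r}}(\mathbf{a})$. By continuity of $f$, every subsequential limit of $(|V_\ell|/n)_\ell$ then maximizes $f$, and uniqueness of the maximizer will force convergence to a single $\mathbf{x}^{\star}$.

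The maximization of $f$ splits naturally into within-class and between-class stages. Introducing the class-mass vector $y_i := \sum_{\ell \in c_i}x_\ell$, a rearrangement gives $f(\mathbf{x}) = \sum_{i=1}^{k}(\log a_i - \log(a_i+1))\sum_{\ell \in c_i}x_\ell^{2} + h(\mathbf{y})$ with $h$ depending only on $\mathbf{y}$. Since $\log a_i - \log(a_i+1) < 0$ and the inner sum is strictly convex subject to $\sum_{\ell \in c_i}x_\ell = y_i$, for fixed $\mathbf{y}$ the unique within-class optimum is the equipartition $x_\ell = y_i/r_i$. After substitution, introducing the cumulative sums $S_i := \sum_{j \geq i}y_j$ (so $S_1 = 1$, $S_{k+1}=0$), telescoping yields
\begin{equation*}
f^{\star}(\mathbf{y}) = \sum_{i=1}^{k}\frac{\log a_i - \log(a_i+1)}{r_i}y_i^{2} + \log(a_1+1) - \sum_{i=2}^{k}\log\!\Bigl(\tfrac{a_{i-1}+1}{a_i+1}\Bigr)S_i^{2}.
\end{equation*}

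The crux of the argument is strict concavity of $f^{\star}$ on the hyperplane $\{\sum y_i=1\}$. For any tangent vector $\mathbf{u}$ with $\sum u_i = 0$, setting $T_i := \sum_{j \geq i}u_j$ (so $T_1 = 0$), the second-order variation of $f^{\star}$ is
\begin{equation*}
Q(\mathbf{u}) = \sum_{i=1}^{k}\frac{\log a_i - \log(a_i+1)}{r_i}u_i^{2} - \sum_{i=2}^{k}\log\!\Bigl(\tfrac{a_{i-1}+1}{a_i+1}\Bigr)T_i^{2}.
\end{equation*}
Both sums consist of nonpositive terms: the first by $\log a_i - \log(a_i+1) < 0$, the second by the admissibility condition $a_{i-1} > a_i$. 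Hence $Q(\mathbf{u}) \leq 0$, with equality forcing $u_i = 0$ for all $i$. This gives a unique maximizer $\mathbf{y}^{\star}$ on the simplex, and hence the unique p.o.w.\ $\mathbf{x}^{\star}_\ell := y^{\star}_{c(\ell)}/r_{c(\ell)}$. The main obstacle is making strict concavity transparent: in the original $\mathbf{x}$-coordinates the quadratic form is not obviously negative definite (several diagonal entries are positive), but the change of variables to the cumulative sums $S_i$ and the telescoping identity cleanly reveal the sign structure governing the problem.
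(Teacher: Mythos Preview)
Your argument is correct and takes a genuinely different route from the paper. The paper proceeds by induction on the number $k$ of layers: after establishing within-layer balance (as you do), it optimises over a single variable $p$, the total proportion of vertices placed in layers $2,\ldots,k$, obtaining a quadratic in $p$ with a unique maximiser $p_1\in(0,1/(r_1+1))$; uniqueness of the weighting inside the remaining layers then comes from the inductive hypothesis. Your proof instead handles all layers at once: after the same within-layer reduction to the class-mass vector $\mathbf{y}$, the telescoping substitution $S_i=\sum_{j\geq i}y_j$ expresses $f^{\star}$ as a constant minus two sums of manifestly nonnegative quadratics, giving strict concavity on the affine hyperplane $\{\sum y_i=1\}$ in one stroke. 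The paper's inductive approach has the advantage of directly producing the explicit recursive formula for $p_1$ (and hence for the entropy density, used immediately afterwards in the paper); your concavity argument is more self-contained and makes transparent exactly where the admissibility hypothesis $a_1>a_2>\cdots>a_k$ enters, namely to make the coefficients $\log\bigl((a_{i-1}+1)/(a_i+1)\bigr)$ strictly positive.
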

\begin{proof}
Let $(\mathbf{r}, \mathbf{a})$ be an admissible pair of $k$-dimensional vectors with $R=\sum_{i=1}^k r_i$. A multigraph $G\in \mathcal{T}_{\mathbf{r}}(\mathbf{a}, n)$ with canonical partition $\sqcup_{i=1}^RV_i$ may be viewed as consisting of $k$ \emph{layers} $L_j$, $1\leq j \leq k$, with layer $L_j$ corresponding to $\sqcup\{V_k: \ \sum_{i=1}^{j-1}r_i < k \leq \sum_{i=1}^{j}r_j\}$.

Suppose that $P(G)=\Pi_{\mathbf{r}}(\mathbf{a},n)$. Then we may assume that its canonical partition satisfies $\vert V_{\ell_1}\vert \geq \vert V_{\ell_2}\vert$ for all $\ell_1<\ell_2$; indeed if $\vert V_{\ell_2}\vert > \vert V_{\ell_1}\vert$ then shifting a vertex from $V_{\ell_2}$ to $V_{\ell_1}$ cannot  decrease the value of $P(G)$. A simple optimisation (alternatively Tur\'an's theorem) then tells us that within each of the layers $L_j$, the canonical partition of $G$ is balanced: if  $\sum_{j=1}^ir_j < \ell_1< \ell_2 \leq \sum_{j=1}^{i+1}r_j$, then  $0\le \vert V_{\ell_1}\vert -\vert V_{\ell_2}\vert\leq 1$.

	We can now complete the proof of the proposition by induction on the number $k$ of layers. For $k=1$, we are done by Tur\'an's theorem. Suppose we have proved uniqueness of the p.o.w. for all admissible pairs of $k$-dimensional vectors with $k\leq K$. Consider a pair of admissible $(K+1)$-dimensional vectors $(\mathbf{r}, \mathbf{a})$. Given that, as shown above, a product-maximising construction must be balanced inside each of its layers, all we have to do is to decide the proportion $1-p$ of the total number of vertices we have to put on the first $r_1$ sets $V_1, \ldots ,V_{r_1}$. To maximise the value of the product we must maximise 
	\begin{align}\label{eq: uniqueness of pow}
(1-p)^2 \log  \left((a_1)^{\frac{1}{r_1}}(a_1+1)^{\frac{r_1-1}{r_1}}\right) + 2(1-p)p\log(a_1+1) + p^2\pi_{\mathbf{r'}}(\mathbf{a'}),\end{align}
where $(\mathbf{r}', \mathbf{a}')$ is the admissible pair obtained from $(\mathbf{r}, \mathbf{a})$ by removing the first coordinates from $\mathbf{r}$ and $\mathbf{a}$. Taking the derivative of the expression above with respect to $p$, we get
\begin{align*}
2\left(\frac{1}{r_1}\log \left(\frac{a_1+1}{a_1}\right)   - p \left(\log \left(a_1+1\right)+\frac{1}{r_1}\log\left(\frac{a_1+1}{a_1}\right) -\pi_{\mathbf{r'}}(\mathbf{a'}) \right)\right).
\end{align*}
Since we started with an admissible pair, $a_2<a_1$ and thus $\pi_{\mathbf{r}'}(\mathbf{a}')< \log(a_2+1)\leq \log(a_1)$, from which it follows that the function of $p$ in~\eqref{eq: uniqueness of pow} attains a unique maximum  in $[0,1]$ at $p=p_{1}$, where
\[ p_{1} = \frac{\log \left(\frac{a_1+1}{a_1}\right)}{\log\left(\frac{(a_1+1)^{r_1+1}}{a_1 \exp\left( r_1\pi_{\mathbf{r}'}(\mathbf{a}') \right) } \right)}\in \bigl(0, \frac{1}{r_1+1}\bigr)\]
 The uniqueness of the p.o.w.\ for $(\mathbf{r}, \mathbf{a})$ then follows by induction on $k$ and our observation that product-maximising constructions are balanced inside each of their layers.
\end{proof}
\begin{remark}\label{remark> positive ccoordinates}
	The proof above also establishes that all coordinates of a p.o.w.\ are strictly positive.
\end{remark}
	The proof of Proposition~\ref{prop: uniqueness of pow} does more than just show the uniqueness of the p.o.w.: it gives an explicit recursive algorithm for computing both p.o.w.\ and entropy densities $\pi_{\mathbf{r}}(\mathbf{a})$: 
\begin{corollary} \label{corollary: computing the entropy densities, iterated case} The following hold:
\begin{enumerate}[(i)]
	\item for every $r,a\in \mathbb{N}$, $\pi_{(r)}((a))= \log(a)+ \frac{r-1}{r}\log\left(\frac{a+1}{a} \right)$;
	\item for every admissible pair $(\mathbf{r}, \mathbf{a})=((r_1, \ldots, r_k), (a_1, \ldots, a_k))$ of $k$-dimensional vectors with $k\geq 2$, 
	\[ \pi_{\mathbf{r}}(\mathbf{a})=\log (a_1) +  \frac{\log \left(\frac{(a_1+1)^{r_1}}{a_1 \exp\left((r_1-1)\pi_{\mathbf{r'}}(\mathbf{a'})\right)  }\right)} {\log \left(\frac{(a_1+1)^{r_1+1}}{a_1 \exp\left(r_1\pi_{\mathbf{r'}}(\mathbf{a'})\right)  }\right)} \log\left(\frac{a_1+1}{a_1} \right),\]
	where $(\mathbf{r'}, \mathbf{a'})$ is the admissible pair of $(k-1)$-dimensional vectors obtained by  removing the first coordinates in $\mathbf{r}$ and $\mathbf{a}$, i.e., $(\mathbf{r'}, \mathbf{a'}):=((r_2, \ldots, r_k), (a_2, \ldots, a_k))$. 
\end{enumerate}
\end{corollary}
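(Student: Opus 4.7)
The plan is to derive both parts directly from the optimisation already carried out in the proof of Proposition~\ref{prop: uniqueness of pow}. For part (i), observe that $\mathcal{T}_{(r)}((a),n)$ consists of $r$-partitions with in-part edges of multiplicity $a$ and between-part edges of multiplicity $a+1$; the same balancing argument used in the proof of Proposition~\ref{prop: uniqueness of pow} (or just Tur\'an's theorem) forces a product-optimal element to be balanced, so it has $(\tfrac{1}{r}+o(1))\binom{n}{2}$ in-part edges and $(\tfrac{r-1}{r}+o(1))\binom{n}{2}$ between-part edges. Taking logarithms, dividing by $\binom{n}{2}$ and letting $n \to \infty$ gives $\pi_{(r)}((a)) = \tfrac{1}{r}\log a + \tfrac{r-1}{r}\log(a+1)$, which rearranges into the form stated in (i).

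For part (ii), I exploit the recursive layer structure from the proof of Proposition~\ref{prop: uniqueness of pow}: a product-optimal $G \in \mathcal{T}_{\mathbf{r}}(\mathbf{a},n)$ splits into a \emph{top layer} $L_1 = V_1\sqcup\cdots\sqcup V_{r_1}$ and a \emph{rest} $L_2 = V_{r_1+1}\sqcup\cdots\sqcup V_R$, carrying proportions $1-p$ and $p$ of the vertices respectively, where $L_1$ induces a balanced $\mathcal{T}_{(r_1)}((a_1),\cdot)$-multigraph, $L_2$ induces a product-optimal element of $\mathcal{T}_{\mathbf{r}'}(\mathbf{a}',\cdot)$, and all edges between $L_1$ and $L_2$ have multiplicity $a_1+1$. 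Writing $A:=\log a_1$, $B:=\log(a_1+1)$, $C:=\pi_{\mathbf{r}'}(\mathbf{a}')$ and $\alpha:=\pi_{(r_1)}((a_1))=\tfrac{A}{r_1}+\tfrac{r_1-1}{r_1}B$ (using part (i)), this yields
\[\pi_{\mathbf{r}}(\mathbf{a}) = \max_{p\in[0,1]} f(p), \qquad \text{where } f(p):=(1-p)^2\alpha + 2(1-p)pB + p^2 C,\]
and the maximiser, already computed inside the proof of Proposition~\ref{prop: uniqueness of pow}, is $p_1 = (B-A)/\bigl((r_1+1)B - A - r_1 C\bigr)$.

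All that then remains is to evaluate $f(p_1)$ and match it against the expression in the statement. The most economical route is to use the first-order optimality condition $(1-p_1)\alpha = (1-2p_1)B + p_1 C$ to eliminate $\alpha$: regrouping $f(p_1)$ as $(1-p_1)\bigl[(1-p_1)\alpha\bigr] + p_1\bigl[2(1-p_1)B + p_1 C\bigr]$ and applying the optimality relation collapses the expression to the clean intermediate form
\[\pi_{\mathbf{r}}(\mathbf{a}) \;=\; (1-p_1)B + p_1 C \;=\; B + p_1(C-B).\]
Writing $B = A + (B-A)$, substituting the explicit value of $p_1$, and combining the two resulting terms over the common denominator $(r_1+1)B - A - r_1 C$ rewrites the numerator as $(r_1 B - A - (r_1-1)C)(B-A)$, matching the quotient in the statement. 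The only obstacle is patient algebraic bookkeeping in this final step; no new ideas beyond those in the proof of Proposition~\ref{prop: uniqueness of pow} are required.
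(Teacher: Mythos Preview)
Your proposal is correct and follows exactly the approach the paper takes: the paper's proof is the single line ``substitute the value of $p_1$ computed in the proof of Proposition~\ref{prop: uniqueness of pow} into~\eqref{eq: uniqueness of pow} and simplify'', and you have carried out precisely that substitution and simplification. Your use of the first-order optimality condition to collapse $f(p_1)$ to $(1-p_1)B+p_1C$ before substituting is a tidy shortcut for the algebra the paper leaves to the reader, but it is not a different method.
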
	
	\begin{proof}
		Substitute the value of $p_1$ computed in the proof of Proposition~\ref{prop: uniqueness of pow} into~\eqref{eq: uniqueness of pow} and simplify.
	\end{proof}
We define an order on admissible pairs as follows. First, set $(r,a) \prec (r',a')$ if either $a<a'$ or $a=a'$ and $r<r'$. 	Let $(\mathbf{r}, \mathbf{a})$ and $(\mathbf{r'}, \mathbf{a'})$ be admissible pairs of $k$-dimensional and $k'$-dimensional vectors respectively. We set $(\mathbf{r}, \mathbf{a}) \prec (\mathbf{r'}, \mathbf{a'})$ if either there exists some $i\leq k$ such that  for all $j<i$, $(r_j, a_j)=(r'_j, a'_j)$ and $(r_i, a_i) \prec (r'_i, a'_i)$, or $k<k'$ and for all $i\leq k$, $(r_i, a_i)=(r'_i, a'_i)$. We observe that this gives a linear ordering of the set of admissible pairs.
\begin{proposition}
	Let $(\mathbf{r}, \mathbf{a})$ and $(\mathbf{r'}, \mathbf{a'})$  be admissible pairs with $(\mathbf{r}, \mathbf{a}) \prec (\mathbf{r'}, \mathbf{a'})$. Then
	\[ \log a_1\leq  \pi_{\mathbf{r}} (\mathbf{a})< \pi_{\mathbf{r'}} (\mathbf{a'})<\log (a'_1+1).\]
\end{proposition}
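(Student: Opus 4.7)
The plan is to reduce the proposition to a monotonicity analysis of the recursive formula in Corollary~\ref{corollary: computing the entropy densities, iterated case}. Setting $D_a := \log\frac{a+1}{a} > 0$ and $\tau := \log(a_1+1) - \pi_{\tilde{\mathbf{r}}}(\tilde{\mathbf{a}})$, where $(\tilde{\mathbf{r}}, \tilde{\mathbf{a}})$ denotes the tail obtained by deleting the first coordinate of each vector and we adopt the convention that the empty pair has entropy density $-\infty$ (so $\tau = +\infty$ when $k=1$), a short algebraic manipulation of Corollary~\ref{corollary: computing the entropy densities, iterated case}(ii) puts the recursion in the clean form
\[ \pi_{\mathbf{r}}(\mathbf{a}) \;=\; \log(a_1+1) \;-\; \frac{\tau D_{a_1}}{r_1 \tau + D_{a_1}} \;=:\; F\bigl(r_1,a_1,\pi_{\tilde{\mathbf{r}}}(\tilde{\mathbf{a}})\bigr), \]
with the limit $F(r,a,-\infty) = \log(a+1) - D_a/r$ recovering part~(i). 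A direct computation of partial derivatives shows that $F$ is strictly increasing in its first and third arguments.

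First I would establish by induction on $k$ the sandwich bounds $\log a_1 \leq \pi_{\mathbf{r}}(\mathbf{a}) < \log(a_1+1)$. The base case $k=1$ is immediate from Corollary~\ref{corollary: computing the entropy densities, iterated case}(i). For $k \geq 2$, admissibility gives $a_2 \leq a_1 - 1$ and the inductive hypothesis gives $\pi_{\tilde{\mathbf{r}}}(\tilde{\mathbf{a}}) < \log(a_2+1) \leq \log a_1$, whence $\tau > D_{a_1}$; checking that $\tau D_{a_1}/(r_1\tau + D_{a_1}) \in (0, D_{a_1}]$ then closes both sides.

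For the strict monotonicity, I would induct on $k'$ and split into cases. If $a_1 < a'_1$, the sandwich bounds immediately give $\pi_{\mathbf{r}}(\mathbf{a}) < \log(a_1+1) \leq \log a'_1 \leq \pi_{\mathbf{r'}}(\mathbf{a'})$. If $(r_1,a_1)=(r'_1,a'_1)$, linearity of $\prec$ forces the tails to be $\prec$-comparable (with the empty tail acting as least element), and the inductive hypothesis together with strict monotonicity of $F$ in its third argument gives $\pi_{\mathbf{r}}(\mathbf{a}) < \pi_{\mathbf{r'}}(\mathbf{a'})$.

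The main obstacle is the remaining case $a_1 = a'_1 = a$ and $r_1 < r'_1$, where the tails need not be $\prec$-comparable and so a direct induction does not apply. Here the plan is to obtain a quantitative uniform separation of the $F$-values. Using $\sigma := \pi_{\tilde{\mathbf{r}}}(\tilde{\mathbf{a}}) < \log a$ (from the already-proved sandwich bound, with $\sigma = -\infty$ when $k=1$), one gets $\tau > D_a$ and hence $\pi_{\mathbf{r}}(\mathbf{a}) < \log(a+1) - D_a/(r_1+1)$; using $\sigma' := \pi_{\tilde{\mathbf{r'}}}(\tilde{\mathbf{a'}}) \geq 0$ (from $a'_2 \geq 1$, or $\sigma' = -\infty$ when $k'=1$), one gets $\pi_{\mathbf{r'}}(\mathbf{a'}) \geq \log(a+1) - D_a\log(a+1)/\bigl(r'_1\log(a+1)+D_a\bigr)$. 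Comparing these two bounds reduces to the elementary inequality $(r'_1 - r_1 - 1)\log(a+1) + D_a > 0$, which holds whenever $r'_1 \geq r_1+1$; the boundary case $k'=1$ is handled by replacing the lower bound by $F(r'_1,a,-\infty)=\log(a+1)-D_a/r'_1$, for which the required inequality $D_a/(r_1+1) \geq D_a/r'_1$ still holds.
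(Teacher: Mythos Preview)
Your proof is correct. It takes a genuinely different route from the paper's: where the paper argues combinatorially via containments of the families $\mathcal{T}_{\mathbf{r}}(\mathbf{a},n)$ and strict positivity of the product-optimal weighting (Remark~\ref{remark> positive ccoordinates}), you work analytically with the recursion of Corollary~\ref{corollary: computing the entropy densities, iterated case}, rewriting it in the pleasant closed form $\pi_{\mathbf{r}}(\mathbf{a})=\log(a_1+1)-\tau D_{a_1}/(r_1\tau+D_{a_1})$ and reading off monotonicity in $r_1$ and in $\pi_{\mathrm{tail}}$ directly. The paper's sketch (``easy exercise in optimisation'') leaves the case $a_1=a'_1$, $r_1<r'_1$ with incomparable tails to the reader; your argument makes this case fully explicit, which is its main advantage.

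One simplification is available in precisely that case. Your lower bound on $\pi_{\mathbf{r'}}(\mathbf{a'})$ uses $\pi_{\mathrm{tail}'}\geq 0$ and then treats $k'=1$ separately. It is cleaner to use the weaker (and uniformly valid) bound $\pi_{\mathrm{tail}'}\geq -\infty$, i.e.\ $\pi_{\mathbf{r'}}(\mathbf{a'})\geq F(r'_1,a,-\infty)=\log(a+1)-D_a/r'_1$, which already dominates your upper bound $\log(a+1)-D_a/(r_1+1)$ since $r'_1\geq r_1+1$. This collapses the case split and recovers the conceptually transparent chain
\[
\pi_{\mathbf{r}}(\mathbf{a})\;<\;\pi_{(r_1+1)}\bigl((a_1)\bigr)\;\leq\;\pi_{(r'_1)}\bigl((a_1)\bigr)\;\leq\;\pi_{\mathbf{r'}}(\mathbf{a'}),
\]
which is essentially what the paper's combinatorial observations are encoding.
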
	
\begin{proof}
	This is an easy exercise in optimisation. Clearly, for any admissible pair of $k$-dimensional vectors $(\mathbf{r}, \mathbf{a})$, we have $\log(a_1)\leq \pi_{\mathbf{r}}(\mathbf{a}) < \log (a_1+1)$. Furthermore,  it is also clear that $\pi_{\mathbf{r}}(\mathbf{a})$ increases strictly if one strictly increases one of the coordinates of $\mathbf{r}$ or $\mathbf{a}$.

	Finally, note that if $(\mathbf{r'}, \mathbf{a'})$ is an admissible pair of $k'$-dimensional vectors for some $k'\geq 2$, then the admissible pair $(\mathbf{r}, \mathbf{a})$  obtained from $(\mathbf{r'}, \mathbf{a'})$ by removing the last coordinates ${r'}_{k'}$ and ${a'}_{k'}$ of $\mathbf{r'}$ and $\mathbf{a'}$, then $\pi_{\mathbf{r}} (\mathbf{a})< \pi_{\mathbf{r}'} (\mathbf{a}')$.  (This follows from Remark~\ref{remark> positive ccoordinates} showing that all coordinates in a p.o.w. are strictly positive, the maximality of the entropy densities and the fact that  in this case $\mathcal{T}_{\mathbf{r}}(\mathbf{a})\subseteq \mathcal{T}_{\mathbf{r'}}(\mathbf{a'})$.)
	Combining these observations with our definition of the linear ordering $\prec$ yields the claimed result.
\end{proof}
Computing the precise value of $\Sigma_{\mathbf{r}}(\mathbf{a}, s)$ for some fixed $s$ seems arduous in general, relying as it does on optimising an iterated construction. So we content ourselves with providing an analogue of Proposition~\ref{prop: sum-extremal subgraphs, threshold for 2 vertices in extremal part} for `$2$-layer' constructions.
\begin{proposition}\label{prop: 2-layer, sum-extremal}
Let $(\mathbf{r}, \mathbf{a})=((r_1,r_2), (a_1,a_2))$ be an admissible pair. Let $G\in \mathcal{T}_{\mathbf{r}}(\mathbf{a}, s)$ with $e(G)=\Sigma_{\mathbf{r}}(\mathbf{a},s)$, and let $\sqcup_{i=1}^{r_1+r_2}V_i$ be  a canonical partition of $V(G)$. Then the following hold:
\begin{enumerate}
	\item if $s\geq r_1r_2(a_1-a_2)+r_1+r_2+1$, then $\sum_{i=r_1+1}^{r_2} \vert V_i\vert \geq r_2+1$ can be achieved;
	\item if $s\leq r_1r_2(a_1-a_2)+r_1+r_2$, then $\sum_{i=r_1+1}^{r_2} \vert V_i\vert \leq r_2$.
\end{enumerate}
\end{proposition}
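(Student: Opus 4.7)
The plan is to follow the template of the proof of Proposition~\ref{prop: sum-extremal subgraphs, threshold for 2 vertices in extremal part}: analyse how $e(G)$ changes under the transfer of a single vertex between the two layers of the construction and read off the threshold. Write $n_1 := \sum_{i=1}^{r_1}|V_i|$ and $n_2 := \sum_{i=r_1+1}^{r_1+r_2}|V_i|$, so $n_1 + n_2 = s$. First I would observe, using Tur\'an's theorem applied separately inside each layer (the inter-layer edges all have multiplicity $a_1+1$ regardless of which parts they sit between, so their total is unaffected by the within-layer partition), that without loss of generality a sum-extremal $G$ has canonical partition balanced within each layer: part sizes within layer~1 differ by at most one, and likewise within layer~2. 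Set $t_1 := \lceil n_1/r_1\rceil$, $t_2 := \lceil n_2/r_2\rceil$, $s_1 := \lfloor n_1/r_1\rfloor$.

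The key step is a change-of-$e(G)$ computation for three elementary moves, each affecting only edges incident to the transferred vertex $v$ and hence straightforward to evaluate from Construction~\ref{iterated construction}. Move (A): transfer $v$ from a largest layer-1 part to an empty layer-2 part (available when $n_2 < r_2$), giving $\Delta e(G) = (t_1-1) - (a_1-a_2)n_2$. Move (B): transfer $v$ from a largest layer-1 part to a layer-2 part already containing exactly one vertex (the case $n_2 = r_2$ after within-layer rebalancing), giving $\Delta e(G) = (t_1-2) - (a_1-a_2)r_2$. Move (C, the reverse direction): transfer $v$ from a layer-2 part of size $t_2 \geq 2$ into a smallest layer-1 part (available when $n_2 \geq r_2+1$), giving $\Delta e(G) = (a_1-a_2)(n_2-1) + t_2 - s_1 - 1$.

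For part~(ii), I would argue by contradiction: suppose some sum-extremal $G$ had $n_2 \geq r_2+1$. Then move~(C) is available, and sum-extremality forces $\Delta e(G) \leq 0$, i.e., $s_1 \geq (a_1-a_2)(n_2-1) + t_2 - 1$. Using $n_2-1 \geq r_2$ and $t_2 \geq 2$, this yields $s_1 \geq (a_1-a_2)r_2 + 1$, whence $n_1 \geq r_1 s_1 \geq r_1 r_2(a_1-a_2) + r_1$, and so $s = n_1+n_2 \geq r_1 r_2(a_1-a_2) + r_1 + r_2 + 1$, contradicting the hypothesis of~(ii).

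For part~(i), starting from any sum-extremal $G$, I would apply move~(A) repeatedly to grow $n_2$ up to $r_2$, then apply move~(B) once to reach $n_2 = r_2+1$, verifying that each move has $\Delta e(G) \geq 0$ under the stated hypothesis on~$s$. The binding constraint is the final move~(B), whose non-negativity condition $\lceil (s-r_2)/r_1 \rceil \geq 2 + r_2(a_1-a_2)$ is exactly equivalent to $s \geq r_1 r_2(a_1-a_2) + r_1 + r_2 + 1$ (the non-negativity of the intermediate (A)-moves is implied by this bound, since their thresholds are strictly weaker). Each intermediate multigraph remains sum-extremal, and the final one witnesses the claim. The main obstacle I anticipate is careful bookkeeping of floor/ceiling effects under the balanced-within-layers assumption; this is technical but not conceptual.
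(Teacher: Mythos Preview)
Your proposal is correct and follows essentially the same vertex-switching approach as the paper's proof: balance within each layer via Tur\'an, then compute $\Delta e(G)$ for single-vertex transfers between layers to read off the threshold. Your write-up is in fact more explicit than the paper's (which handles part~(ii) with a one-line ``essentially the same vertex-switching argument''), separating out the three moves~(A), (B), (C) and checking the binding constraint is move~(B), but the underlying argument is identical.
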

\begin{proof}
This is an easy adaptation of the argument from Proposition~\ref{prop: sum-extremal subgraphs, threshold for 2 vertices in extremal part}. For the first part, it is enough to show the result when $s=r_1r_2(a_1-a_2)+r_1+r_2+1$. By Tur\'an's theorem, we know that the sizes of the sets $V_i$ are balanced within each of the layers $L_1:=\sqcup_{i=1}^{r_1}V_i$ and $L_2:=\sqcup_{i=r_1+1}^{r_2}V_i$. Suppose that $\vert L_2\vert =r$, for some $r\leq r_2$. Switch a vertex from the largest part in $V_1$ to the smallest part in $L_2$. If $r<r_2$, this changes $e(G)$ by
\[\left(\left\lceil \frac{s-r}{r_1}\right\rceil -1\right)- r(a_1-a_2)\geq  \left\lceil \frac{2}{r_1}\right\rceil \geq 1>0.\]
Thus by maximality of $e(G)$ we may assume $r=r_2$, in which case our vertex-switch changes $e(G)$ by
\[\left(\left\lceil \frac{s-r_2}{r_1}\right\rceil -1\right)- r(a_1-a_2)-1=   \left\lceil \frac{1}{r_1}\right\rceil -1  =0,\]
and thus $\vert L_2\vert =r_2+1$ can be achieved while maximising $e(G)$, as claimed.

For the second part, it is enough to show the result when $s=r_1r_2(a_1-a_2)+r_1+r_2$. Essentially the same vertex-switching argument as above then shows that $\vert L_2\vert \leq r_2$ if maximising $e(G)$. 
\end{proof}

\section{Open Problems}
Much work remains to be done on the Mubayi--Terry multigraph problem. Here we highlight a few of the specific problems left open in this paper:
\begin{itemize}
	\item tightening the bounds on $n$ in Theorem~\ref{theorem: turan}: the correct bound should probably just be $n\geq s$. In the special case where $r=s-1$, this was proved by Mubayi and Terry~\cite[Theorem 10]{mt2};
	\item tackling the special cases $(s,q)=(5, \binom{5}{2}a+4)$ for any fixed  $a\in \mathbb{Z}_{\geq 3}$ and $(s,q)=(6, \binom{6}{2}a+11)$ for any fixed $a \in \mathbb{Z}_{\geq 2}$ would probably go quite some way towards settling Conjecture~\ref{conjecture: entropy densities} in full generality (these specific cases are interesting, as they are some of the `smallest' cases open, and also because of their conjectured extremal structure, which come from the families $\mathcal{T}_{2,2}(a, n)$ and $\mathcal{T}_{3,1}(a,n)$ respectively). {\bf Remark:}\emph{ the case $(s,q)=(6, \binom{6}{2}a+11)$ of Conjecture~\ref{conjecture: entropy densities} was proved asymptotically by the second author in a recent preprint~\cite{vfr}};
	\item tackling cases of $(s,q)$ not covered by Conjecture~\ref{conjecture: entropy densities}. Among those, the following cases are of particular interest: $(s,q)=(5,24)=(5, \binom{5}{2}2+4)$ (for which constructions from $\mathcal{T}_{2,2}(2,n)$ are not available, see below), $(s,q)=(6, \binom{6}{2}a+8)$ for $a\in \mathbb{Z}_{\geq 2}$ (where we provide an iterated construction from $\mathcal{T}_{(1,2)}((a, a-1), n)$ as a potential extremal candidate) and $(s,q)=(5, \binom{5}{2}a+7)$ or $(6, \binom{6}{2}a+10)$ for $a\in \mathbb{Z}_{\geq 2}$ (where we do not have any candidate extremal example).
\end{itemize}
\smallskip
The overarching goal is of course to be able to determine at least the value of $\mathrm{ex}_{\Pi}(s,q)$
 for \emph{all} pairs $(s,q)$. This appears to be a difficult problem in general. For instance, in the case of $(s,q)=(5,24)$ alluded to above, we have $q=\Sigma_{2,2}(2,s)$ --- but all multigraphs $G$ in $\mathcal{T}_{2,2}(2,n)$ that set more than two vertices inside the part $V_0$ have $P(G)=0$.  A trivial lower bound on $\mathrm{ex}_{\Pi}(5, 24)$ is $\mathrm{ex}_{\Pi}(5,23)=2$ (as shown by Mubayi and Terry in~\cite[Theorem 8]{mt2}), which can be attained by e.g. considering the multigraph on $n$ vertices in which every edge has multiplicity $2$. A better construction is obtained by considering the following example:
\begin{construction}
Let $\sqcup_{i=1}^6 V_i$ be a balanced partition of $[n]$. Define a multigraph $H_6(n)$ on $[n]$ by
\begin{enumerate}[(i)]
	\item assigning a multiplicity of $1$ to every pair internal to some $V_i$, $1\leq i \leq 6$;
	\item assigning a multiplicity of $3$ to every pair from $V_i\times V_{i+1}$, $1\leq i \leq 6$, where we follow the convention that $V_{6+1}:=V_1$;
	\item assigning a multiplicity of $2$ to every other pair from $[n]^{(2)}$.
\end{enumerate}
\end{construction} 
\noindent It is easily checked that $H_6(n)\in \mathcal{F}(n, 5, 24)$, thereby showing $\mathrm{ex}_{\Pi}(5, 24)\geq 3^{\frac{1}{3}}2^{\frac{1}{2}}$, which is strictly greater than $\mathrm{ex}_{\Pi}(5, 23)= 2$. But we have no good intuition as to whether this construction is optimal or not, or  how to generalise it to cover other cases where $(s,q)=\Sigma_{r,d}(a,s)$ and $a\leq d$: a complete conjectural classification of the form $\mathrm{ex}_{\Pi}(s,q)$ should take for any given $(s,q)$ is still very much lacking.

\section*{Acknowledgements}
The second author is grateful to Saga Samuelsson, whose undergraduate project led him to consider the Mubayi--Terry multigraph problem. Research on this problem was conducted over a research visit by the third author to Ume{\aa} University in October 2019, whose hospitality is gratefully acknowledged. Research of the first and second authors was funded by a grant from Vetenskapsr{\aa}det, for whose financial support they are most grateful. The authors are also grateful to the referees for their helpful and careful reviews.


\appendix
\section{Proof of transcendentality conditional on Schanuel's conjecture}

\begin{conjecture}[Schanuel's Conjecture]\label{conjecture: Schanuel}
Suppose the complex numbers $z_1, z_2, \ldots, z_n$ are linearly independent over $\mathbb{Q}$. Then the field extension $\mathbb{Q}(z_1,z_2, \ldots, z_n, e^{z_1},e^{z_2}, \ldots , e^{z_n})$ has transcendence degree at least $n$ over $\mathbb{Q}$.
\end{conjecture}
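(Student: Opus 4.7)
The statement labelled as Conjecture~\ref{conjecture: Schanuel} is Schanuel's Conjecture, one of the most celebrated open problems in transcendence theory, so I would not propose to prove it from scratch. The paper itself is not attempting a proof here either: the appendix is titled ``Proof of transcendentality conditional on Schanuel's conjecture'', and this statement is included purely as a hypothesis to be invoked (together with Mih\u{a}ilescu's theorem) in the conditional transcendence arguments for $x_{r\star}(a,d)$, $\pi_{r,d}(a)$ and $e^{\pi_{r,d}(a)}$. So my first move would be to recognise that no proof is expected here, and the ``proof'' block that follows the statement is almost certainly empty or absent.

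If instead one tried to actually prove the conjecture, the natural starting point would be the known benchmark cases. The $n=1$ case with $z_1$ algebraic and nonzero reduces to the Hermite--Lindemann theorem (transcendence of $e^\alpha$ for algebraic $\alpha \neq 0$). The full algebraic case, where $z_1,\dots,z_n$ are algebraic and $\mathbb{Q}$-linearly independent, is the Lindemann--Weierstrass theorem. The $n=2$ case with $z_1 = 1$ and $z_2 = \log \alpha$ is a consequence of the Gelfond--Schneider theorem. Baker's theorem on linear forms in logarithms gives further special cases, and the strongest general partial results are Wilkie's $o$-minimality work and the analytic subgroup theorem of W\"ustholz, together with the Ax--Schanuel theorem, which is the function-field analogue proved by Ax in 1971 using differential algebra.

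The natural plan of attack, then, would be to try to lift Ax's differential-algebraic proof from the formal power series setting to the complex analytic/arithmetic setting: one would want an ``effective'' analogue of Ax's theorem in which one controls not just the transcendence degree of $\mathbb{C}(z_1,\dots,z_n,e^{z_1},\dots,e^{z_n})$ over $\mathbb{C}$ (which Ax handles) but over $\mathbb{Q}$. The hard part, and the reason the conjecture has been open since 1966, is precisely this lift: Ax's argument exploits the derivation on $\mathbb{C}((t))$ and has no arithmetic content, while Schanuel's conjecture is fundamentally an arithmetic statement about $\overline{\mathbb{Q}}$-algebraic independence. Bridging this gap would almost certainly require a major new idea in transcendence theory, and I would not expect to make progress on it within the scope of this paper; accordingly my proposal is simply to state the conjecture as the authors do, and proceed to use it.
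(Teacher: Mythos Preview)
Your assessment is correct and matches the paper exactly: Schanuel's Conjecture is stated in the appendix purely as a hypothesis, with no proof given or expected, and is then invoked conditionally in the subsequent lemmas. Your surrounding commentary on known partial results and the Ax--Schanuel analogue is accurate background but goes well beyond anything the paper says; the paper simply states the conjecture and moves on.
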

\noindent Recall that given integers $a,r,d$  with $a>d\geq 0$ and $r\in \mathbb{N}$, we defined quantities
\[x_{r\star}(a,d):= \frac{\log \left((a+1)/a\right)}{\log \left((a+1)^r/(a(a-d)^{r-1})\right)} \]
and
\[\pi_{r,d}(a):= \log(a)+\left(\frac{r-2+x_{r\star}(a,d)}{r-1}\right)\log\left(\frac{a+1}{a}\right).\]
Note that this version of the definition of $\pi_{r,d}(a)$ follows by considering (\ref{eq: x_star property}).
Assuming Schanuel's conjecture, one can reduce the problem of showing $x_{r\star}(a,d)$, $\pi_{r,d}(a)$ and $e^{\pi_{r,d}(a)}$ are transcendental for $d\geq 1$ to that of showing that certain sets are linearly independent over $\mathbb{Q}$ (see e.g. Lemma~\ref{lemma: a-dgeq 2, quantities are transcendental modulo} below).  Using Mih\u{a}ilescu's 2004 proof of the Calatan conjecture~\cite{Mihailescu04}, we are able to verify the required linear independence in general and obtain the following:
\begin{proposition}\label{prop: transcendentality}
Suppose Schanuel's conjecture is true. Then for all integers $a>d\geq 1$ and $r\geq 2$, the quantities $x_{r\star}(a,d)$, $\pi_{r,d}(a)$ and $e^{\pi_{r,d}(a)}$ are all transcendental. 
\end{proposition}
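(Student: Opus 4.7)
The approach I would take is the standard Schanuel-type transcendence argument: use the conjecture to promote $\mathbb{Q}$-linear independence of certain logarithms of rationals to algebraic independence over $\mathbb{Q}$, and then read off transcendence of the three target quantities as nonzero, non-$\mathbb{Q}$-valued rational functions in that transcendence basis. Set $L_1 = \log a$, $L_2 = \log(a+1)$, $L_3 = \log(a-d)$ (with the convention $L_3 := 0$ when $a-d = 1$); the exponentials $e^{L_i}$ are positive rationals, hence algebraic. The plan is to reduce each transcendence statement to showing a specific element of $\mathbb{Q}(L_1, L_2, L_3)$ does not lie in $\mathbb{Q}$, and separately to establish the $\mathbb{Q}$-linear independence of the $L_i$ that feeds Schanuel.

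Assume first that $\{L_1, L_2, L_3\}$ is $\mathbb{Q}$-linearly independent. Applying Schanuel to this set yields algebraic independence over $\mathbb{Q}$, so $\mathbb{Q}(L_1, L_2, L_3) \cap \overline{\mathbb{Q}} = \mathbb{Q}$. Then $x_{r\star} = (L_2 - L_1)/(rL_2 - L_1 - (r-1)L_3)$ is a ratio of two linearly independent linear forms in the $L_i$, so $x_{r\star} \in \mathbb{Q}(L_1, L_2, L_3) \setminus \mathbb{Q}$, and hence transcendental. Substituting $x_{r\star}$ into the definition of $\pi_{r,d}(a)$ and clearing denominators yields the polynomial identity
\[
\bigl[(r-1)\pi_{r,d}(a) - L_1 - (r-2)L_2\bigr]\bigl[rL_2 - L_1 - (r-1)L_3\bigr] = (L_2 - L_1)^2.
\]
If $\pi_{r,d}(a)$ were algebraic, this identity would hold in $\overline{\mathbb{Q}}[L_1, L_2, L_3]$; comparing the coefficient of $L_2^2$ then forces $-r(r-2) = 1$, i.e.\ $(r-1)^2 = 0$, contradicting $r \geq 2$. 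For $e^{\pi_{r,d}(a)}$ I would apply Schanuel a second time to $\{L_1, L_2, L_3, \pi_{r,d}(a)\}$: a parallel polynomial-coefficient check (writing $\pi_{r,d}(a) = \alpha L_1 + \beta L_2 + \gamma L_3$ and verifying no rational $\alpha, \beta, \gamma$ work) shows $\pi_{r,d}(a)$ is not a $\mathbb{Q}$-linear combination of the $L_i$, so the augmented set is $\mathbb{Q}$-linearly independent. Schanuel then gives $\mathrm{trdeg}_{\mathbb{Q}} \mathbb{Q}(L_1, L_2, L_3, \pi_{r,d}(a), e^{\pi_{r,d}(a)}) \geq 4$, whereas $\pi_{r,d}(a) \in \mathbb{Q}(L_1, L_2, L_3)$ bounds this transcendence degree by $3$ unless $e^{\pi_{r,d}(a)}$ is itself transcendental over $\mathbb{Q}(L_1, L_2, L_3)$; in particular it is then transcendental over $\mathbb{Q}$.

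The main obstacle is verifying the $\mathbb{Q}$-linear independence of $\{L_1, L_2, L_3\}$ — equivalently, the multiplicative independence of $a, a+1, a-d$ in $\mathbb{Q}^*$. Using $\gcd(a, a+1) = 1$, a prime-by-prime analysis of a hypothetical nontrivial relation $a^p(a+1)^q(a-d)^s = 1$ forces both $\mathrm{rad}(a) \mid d$ and $\mathrm{rad}(a+1) \mid d+1$, which in turn constrains $a - d$ to factor as $a^{\alpha}(a+1)^{\beta}$ with rational $0 \leq \alpha, \beta < 1$ compatible with integrality of valuations; this makes $a$ or $a+1$ (or both) a proper perfect power whose root is tied to $a - d$. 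Mih\u{a}ilescu's resolution of Catalan's conjecture sharply constrains such near-consecutive-perfect-power configurations, reducing the possibilities to a small number of explicit families (such as $a = (a-d)^k$, typified by $a = 4, d = 2$, or $a+1 = (a-d)^k$, typified by $a = 3, d = 1$). For each such degenerate family I would work instead with a maximal $\mathbb{Q}$-linearly independent subset of $\{L_1, L_2, L_3\}$ (a pair in every case) and rerun the two polynomial-coefficient arguments above in the resulting two-variable ring — the key check is that the degree-two-over-degree-one rational-function structure of $x_{r\star}$, $\pi_{r,d}(a)$, and the putative $\mathbb{Q}$-linear representation of $\pi_{r,d}(a)$ still precludes membership in $\mathbb{Q}$. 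This combinatorial/Diophantine enumeration, rather than the Schanuel mechanics, is the chief remaining difficulty.
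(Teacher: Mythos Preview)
Your proposal is correct and takes essentially the same approach as the paper: apply Schanuel to $\{\log a,\ \log(a+1),\ \log(a-d)\}$ to obtain algebraic independence, read off transcendence of $x_{r\star}$ and $\pi_{r,d}$ as nonconstant rational functions of these logarithms, apply Schanuel a second time (after checking $\pi_{r,d}$ is not a $\mathbb{Q}$-linear combination of the $L_i$, via the same polynomial-coefficient comparison you outline) to get transcendence of $e^{\pi_{r,d}}$, establish multiplicative independence of $a,\ a+1,\ a-d$ using Mih\u{a}ilescu, and treat the degenerate families ($a-d=1$, or one of $a,\ a+1$ a rational power of $a-d$) separately with the two logarithms $\log a,\ \log(a+1)$. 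One minor remark: for the multiplicative-independence step the paper argues directly from $(a+1)^{\ell}a^{m}=(a-d)^{-n}$ that both $a$ and $a+1$ must be proper perfect powers (via a gcd-of-exponents argument on their prime factorisations) and then invokes Catalan, which is a bit more direct than your radical-divisibility sketch.
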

\begin{proof}
We begin the proof by showing how Schanuel's conjecture may help us to reduce the problem of showing transcendentality to the problem of linear independent over $\mathbb{Q}$ in general.
\begin{lemma}\label{lemma: a-dgeq 2, quantities are transcendental modulo}
Let $a,d$ be natural numbers with $a-d\geq 2$. Suppose Schanuel's conjecture is true and that $\{\log (a+1), \log (a), \log (a-d)\}$ is a linearly independent set over $\mathbb{Q}$. Then the quantities $x_{r\star}(a,d)$, $\pi_{r,d}(a)$ are transcendental over $\mathbb{Q}$. If in addition $\{ \log(a+1), \log(a), \log(a-d), x_{r\star}(a,d)\log\left(\frac{a+1}{a}\right)\}$ is a linearly independent set over $\mathbb{Q}$, then the quantity $e^{\pi_{r,d}(a)}$ is transcendental over $\mathbb{Q}$ as well.
\end{lemma}
\begin{proof}
By Schanuel's conjecture and our assumption of linear independence over $\mathbb{Q}$ of the set $\{\log (a+1), \log (a), \log (a-d)\}$, the field extension $\mathbb{Q}(\log (a+1), \log (a), \log (a-d), a+1, a, a-d)$ (i.e. the field extension $\mathbb{Q}(\log(a+1), \log(a), \log (a-d))$) has transcendence degree at least (in fact exactly) three over $\mathbb{Q}$. In particular, for any non-zero polynomial with rational coefficients $P=P(x,y, z)$, we have $P(\log(a+1), \log(a), \log (a-d))\neq 0$.

Now $x_{r\star}(a,d)$ and $\pi_{r,d}(a)$ are non-zero rational functions of $\log(a+1), \log(a)$ and $\log (a-d)$ with rational coefficients. Explicitly, we have $x_{r\star}=F(\log(a+1), \log(a), \log(a-d))$ where $F(x,y,z)= \frac{x-y}{rx-y-(r-1)z}$, and $\pi_{r,d}(a)=G(\log(a+1), \log(a), \log (a-d)$ where $G(x,y,z)=y +\left(\frac{r-2+F(x,y,z)}{r-1}\right)(x-y)$. Since $r>1$, neither $F$ nor $G$ are constant polynomials. Thus if there exists some non-zero polynomial $Q=Q(x)$ with rational coefficients such that $Q(x_{r\star}(a,d))=0$, then there exists a non-zero polynomial $P=P(x,y,z)$ with rational coefficients such that $P(\log(a+1), \log(a), \log (a-d))= 0$, a contradiction. It follows that $x_{r\star}(a,d)$ is transcendental over $\mathbb{Q}$. The transcendentality of $\pi_{r,d}(a)$ follows similarly.

Finally if $\{ \log(a+1), \log(a), \log(a-d), x_{r\star}(a,d)\log\left(\frac{a+1}{a}\right)\}$ is a linearly independent set over $\mathbb{Q}$, then so is  $\{\log (a+1), \log (a), \log (a-d), \pi_{r,d}(a)\}$. Schanuel's conjecture then implies that the field extension  $\mathbb{Q}\left( \log(a+1), \log(a), \log(a-d), \pi_{r,d}(a), a+1, a, a-d, e^{\pi_{r,d}(a)}\right)$ (i.e. the field extension $\mathbb{Q}\left( \log(a+1), \log(a), \log(a-d), \pi_{r,d}(a), e^{\pi_{r,d}(a)}\right)$) has transcendence degree at least four over $\mathbb{Q}$. Since $\pi_{r,d}(a)=G(\log(a+1), \log(a), \log(a-d))$ and $G$ is a rational function with rational coefficients, it follows that the set $\{\log(a+1), \log(a), \log(a-d), \pi_{r,d}(a)\}$ is not algebraically independent over $\mathbb{Q}$. Our bound on the transcendence degree of  $\mathbb{Q}\left( \log(a+1), \log(a), \log(a-d), \pi_{r,d}(a), e^{\pi_{r,d}(a)}\right)$ then implies that $e^{\pi_{r,d}(a)}$ must be transcendental.

\end{proof}
We now apply Lemma~\ref{lemma: a-dgeq 2, quantities are transcendental modulo} to deal with the most general case of Proposition~\ref{prop: transcendentality}.
\begin{lemma}\label{lemma: transcendentality, general case}
		Assuming Schanuel's conjecture, for all integers $a,d$ with $a\geq d+2\geq 3$ and such that neither $a+1$ nor $a$ is a rational power of $a-d$,  the quantities $x_{r\star}(a,d)$, $\pi_{r,d}(a)$ and $e^{\pi_{r,d}(a)}$ are transcendental for all integers $r\in \mathbb{Z}_{\geq 2}$.
\end{lemma}
\begin{proof}
By Lemma~\ref{lemma: a-dgeq 2, quantities are transcendental modulo}, it is enough to show that $\{\log (a+1), \log (a), \log (a-d), x_{r\star}(a,d)\log\left(\frac{a+1}{a}\right) \}$ is a linearly independent set over $\mathbb{Q}$.

We shall do this in two stages, by first proving 	$\{\log (a+1), \log (a), \log (a-d)\}$ is a linearly independent set over $\mathbb{Q}$. Indeed, suppose this was not the case. Then by clearing denominators of a rational linear combination of the these three elements, we obtain that there exist integers $\ell, m, n \in \mathbb{Z}$, not all zero, such that
\[\ell \log(a+1)+ m\log (a)+ n \log (a-d)=0,\]
i.e 
\[(a+1)^{\ell}a^m (a-d)^n=1.\]	
Since the  integers $(a+1)$ and $a$ are strictly greater than $2$ and coprime, it follows that $n\neq 0$. Since neither $a+1$ nor $a$ is a rational power of $a-d$, it follows that both $\ell$ and $m$ must be non-zero. Further, by the coprimality of $a+1$ and $a$, we must have that $\ell$ and $m$ have the same sign and that $n$ has the opposite sign, i.e. we may assume without loss of generality that $\ell,m>0$ and $n<0$, and that we have 
\begin{align}\label{eq: indep}
(a+1)^{\ell}a^m=(a-d)^{-n}.
\end{align} We claim that $a+1$ and $a$ are both integer powers. Indeed, let $\prod_{i= 1}^{s} p_i^{\alpha_i}$ be the prime factorisation of $a+1$. Let $\alpha:=\mathrm{gcd}\{\alpha_i: \ 1\leq i\leq s\}$. Suppose $\alpha=1$. By \eqref{eq: indep} and the coprimality of $(a+1)$ and $a$, we have that for $1\leq i\leq s$, $p_i$ is a factor of $(a-d)$ with some multiplicity $\beta_i>1$ satisfying $\beta_i = -(\alpha_i \ell)/n$. Now~\eqref{eq: indep} implies that $\ell<-n$, and thus that $-\ell/n$ can be written as $\ell'/n'$ where $\ell'$ and $n'$ are coprime and $n'>1$. In particular since $\beta_i$ is an integer for every $i$, this implies that $n'$ is a factor of $\alpha_i$ for every $i$, contradicting our assumption that $\alpha=1$. It thus follows that $\alpha>1$, and  $(a+1)$ is the $\alpha$-th power of some natural number strictly greater than $1$. By exactly the same argument, we have that $a$ is the $\beta$-th power of some natural number strictly greater than $1$, for some $\beta >1$. Thus $a+1$ and $a$ are integer powers differing by exactly $1$. It then follows from Mih\u{a}ilescu's Theorem~\cite{Mihailescu04} that $a+1=3$ and $a=2$, which contradicts our assumption on $a$. Thus $\{\log (a+1), \log (a), \log (a-d)\}$ is a linearly independent set over $\mathbb{Q}$ as claimed.

Since $e^{\log (a+1)}, e^{\log a }, e^{\log (a-d)}$ are all integers, the linear independence over $\mathbb{Q}$ we have just established together with Schanuel's conjecture implies that $\{\log(a+1), \log(a), \log(a-d)\}$ are algebraically independent over $\mathbb{Q}$. We now use this fact to establish  that $\{\log (a+1), \log (a), \log (a-d), x_{r\star}(a,d)\log\left(\frac{a+1}{a}\right) \}$ is a linearly independent set over $\mathbb{Q}$, thereby completing the proof. Note $x_{r\star}(a,d)\log\left(\frac{a+1}{a}\right)=f(\log(a+1), \log(a))/g(\log(a+1), \log(a), \log(a-d))$, where $f$ and $g$ are the linear functions
\begin{align*}
f(x,y):= (x-y)^2 &&  g(x,y,z):=rx-y-(r-1)z.
\end{align*}
Suppose for contradiction that 	$x_{r\star}(a,d)\log\left(\frac{a+1}{a}\right)$ lies in the linear span of  $\{\log (a+1), \log (a), \log (a-d)\}$ over $\mathbb{Q}$. Then, clearing denominators and multiplying out by $g(\log(a+1), \log(a), \log(a-d))$ as necessary, we obtain that there are integers $k, \ell, m,n$ with $k>0$ such that
\begin{align*}
\left(\ell \log(a+1)+m \log(a) + n\log (a-d)\right)g\left(\log (a+1),\log(a),\log(a-d)\right)=kf(\log (a+1),\log(a)). 
\end{align*}
Gathering terms, we get that $(x,y,z)=(\log(a+1), \log(a), \log(a-d))$ is a root of the degree $2$ polynomial $P$ given by
\[P(x,y,z)=x^2(k-r\ell) +y^2(k+m)+z^2(n(r-1))+xy(-2k-rm+\ell)+xz(-rn+(r-1)\ell)) +yz(m(r-1)+n).\]
Observe $P$ has integer coefficients. Since, as we showed above from Schanuel's conjecture, $\{\log(a+1),\log(a), \log(a-d)\}$ is algebraically independent over $\mathbb{Q}$, $P$ must be the zero polynomial. Going through the $x^2$ and $y^2$ coefficients in $P$, this implies that $k=r\ell=-m$, which is strictly positive by our assumption on $k$. But now consider the $xy$ coefficient in $P$, which is equal to $-2k+rk+k/r=k(r-2+1/r)>0$. This shows $P$ cannot be the zero polynomial, a contradiction. We deduce from this that $x_{r\star}(a,d)\log\left(\frac{a+1}{a}\right)$ does not lie in the linear span of  $\{\log (a+1), \log (a), \log (a-d)\}$ over $\mathbb{Q}$. This concludes the proof.
\end{proof}
The special cases where $a+1$ or $a$ is a rational powers of $a-d$ or where $a-d=1$ can be dealt with in a very similar way (though in this case we do not need to appeal to Mih\u{a}ilescu's Theorem).
\begin{lemma}\label{lemma: transcendentality special case}
Assuming Schanuel's conjecture, for all integers	$a,d$ with $a>d\geq 1$ and such that either one of $a+1$ or $a$ is a rational power of $a-d$,  or $a-d=1$ holds, the quantities $x_{r\star}(a,d)$, $\pi_{r,d}(a)$ and $e^{\pi_{r,d}(a)}$ are transcendental for all integers $r\in \mathbb{Z}_{\geq 2}$.
\end{lemma}
\begin{proof}
We begin by observing that for all $a\ge 2$, Schanuel's conjecture implies that $\log(a+1)$ and $\log(a)$ are algebraically independent over $\mathbb{Q}$. Indeed, since $(a+1)$ and $a$ are coprime, it is trivial to show that $\log(a+1)$ and $\log(a)$ are linearly independent over $\mathbb{Q}$. Schanuel's conjecture then implies that the field extension $\mathbb{Q}(\log(a+1), \log(a))$ has transcendence degree $2$, as claimed.

Since $a+1$ and $a$ are coprime, only one of them can be a rational power of $a-d$.  If one of them is a rational power of $(a-d)$ or if $a-d=1$, then we have that $x_{r\star}(a,d)$ and $\pi_{r,d}(a)$ are non-zero (and, since $r\geq 2$, non-constant) rational functions of $\log(a+1)$ and $\log a$ with rational coefficients. Thus by the algebraic independence of $\log(a+1)$ and $\log(a)$ established above, it follows that both $x_{r\star}(a,d)$ and $\pi_{r,d}(a)$ are transcendental.

Finally to establish the transcendentality of $e^{\pi_{r,d}(a)}$, we show that $\{\log(a+1), \log(a), x_{r\star}(a,d)\log\left(\frac{a+1}{a}\right) \}$ is linearly independent over $\mathbb{Q}$.

If $(a+1)=(a-d)^q$ for some rational $q>1$, then $x_{r\star}(a,d)\left(\log(a+1)-\log(a)\right)$ can be written as $f(\log(a+1), \log(a))/g(\log(a+1), \log(a))$ where $f$ and $g$ are the linear functions
\begin{align*}
 f(x,y):= (x-y)^2 &&  g(x,y):= (r-\frac{(r-1)}{q})x-y.
 \end{align*}
 Suppose for contradiction that $x_{r\star}(a,d)\left(\log(a+1)-\log(a)\right)$ lies in the linear span over $\mathbb{Q}$ of $\{\log(a+1), \log(a)\}$. Clearing denominators and  multiplying out by $g(\log(a+1), \log(a))$ as necessary, we obtain that there are integers $k,\ell, m$ with $k>0$ such that
 \[\left(\ell \log(a+1) + m\log(a)\right)g(\log(a+1), \log(a)) = k f(\log(a+1), \log(a)). \]
Gathering terms, we get that $(x,y)=(\log(a+1), \log(a))$ is a root of the polynomial $P$ given by 
\[P(x,y)=x^2\left(k-\ell\left(r-\frac{r-1}{q}\right)\right)	+y^2\left(k+m\right)+ xy\left(-2k + \ell -m\left(r-\frac{r-1}{q}\right)\right).\]

Now by the algebraic independence of $\{\log(a+1), \log(a)\}$ established at the start of this proof, and by the fact that $P$ has rational coefficients, we have that $P$ must be the zero polynomial. Inspecting the coefficients of $P$ for $x^2$ and $y^2$, we see that $k= \ell \left(r-\frac{r-1}{q}\right)=-m$. Since $k>0$, it follows that both $m$ and $\ell$ are non-zero. Further since $q>1$, we have that $r- \frac{r-1}{q}>1$, so that $\ell\neq -m$. But now consider the coefficient of $xy$ in $P$: given $k= \ell \left(r-\frac{r-1}{q}\right)=-m$, the quantity $-2k + \ell -m\left(r-\frac{r-1}{q}\right)$ can be rewritten as  $\left(\ell+m\right)\left(1- \left(r-\frac{r-1}{q}\right)\right)$. As we have shown $\ell\neq -m$ and as $q>1$ implies $\left(1- \left(r-\frac{r-1}{q}\right)\right)<0$, we have that the coefficient of $xy$ in $P$ is non-zero, a contradiction. It follows that $\{\log(a+1), \log(a), x_{r\star}(a,d)\left(\log(a+1)-\log(a)\right) \}$ is linearly independent over $\mathbb{Q}$, as required.

The linear independence of $\{\log(a+1), \log(a), x_{r\star}(a,d)\left(\log(a+1)-\log(a)\right) \}$ over $\mathbb{Q}$ in the cases where $a=(a-d)^q$ for some rational $q>1$ or where $a-d=1$ are obtained in a similar way, mutatis mutandis. Thus in all three cases considered in this proposition it follows that  the set $\{\log(a+1, \log(a), \pi_{r,d}(a)\}$ is linearly independent over $\mathbb{Q}$ (since $\pi_{r,d}(a)$ is a rational linear combination of $\log(a)$ and $x_{r\star}(a,d)\left(\log(a+1)-\log(a)\right)$). By Schanuel's conjecture, this linear independence over $\mathbb{Q}$ implies that the field extension $\mathbb{Q}(\log(a+1), \log(a), \pi_{r,d}(a), a+1, a, e^{\pi_{r,d}(a)})$ has transcendence degree at least three over $\mathbb{Q}$. Since $\pi_{r,d}$ can be written as a rational function of $\log(a+1)$ and $\log(a)$ with rational coefficients, the set $\{\log(a+1), \log(a), \pi_{r,d}(a)\}$ is algebraically dependent over $\mathbb{Q}$. As $a+1, a\in \mathbb{Q}$, the transcendence degree of the field extension thus implies that  $\{\log(a+1), \log(a), e^{\pi_{r,d}(a)}\}$ is algebraically independent over $\mathbb{Q}$, and in particular that $e^{\pi_{r,d}(a)}$ is transcendental as claimed.
\end{proof}
Together, Lemmas~\ref{lemma: transcendentality, general case} and~\ref{lemma: transcendentality special case} give Proposition~\ref{prop: transcendentality}.
\end{proof}
\end{document}